\newtheorem{theorem}{Theorem}[section]
\newtheorem{lemma}[theorem]{Lemma}
\newtheorem{proposition}[theorem]{Proposition}
\newtheorem{corollary}[theorem]{Corollary}
\newtheorem{fact}[theorem]{Fact}
\newtheorem{remark}[theorem]{Remark}
\newtheorem{definition}[theorem]{Definition}
\numberwithin{equation}{section}
\begin{document}

\newcommand{\cc}{\mathfrak{c}}
\newcommand{\N}{\mathbb{N}}
\newcommand{\PP}{\mathbb{P}}
\newcommand{\forces}{\Vdash}
\newcommand{\R}{\mathbb{R}}
\newcommand{\LL}{\mathbb{L}}
\newcommand{\MM}{\mathbb{M}}

\title{A $C(K)$ BANACH SPACE WHICH DOES NOT HAVE THE SCHROEDER-BERNSTEIN PROPERTY}
\author{PIOTR KOSZMIDER}
\thanks{The  author was partially supported by Polish Ministry of
Science and Higher Education research grant N N201
386234 and Junta de Andaluc\'\i a and FEDER grant P06-FQM-01438. The research leading to this
paper was conducted while the author was visiting Universidad de Granada in the spring of
2009. He would like
to thank Professors Miguel Mart\'\i n and Javier Mer{\'\i} for their hospitality and fruitful
discussions.}
\address{Institute of Mathematics, Polish Academy of
Sciences, ul. \'Sniadeckich 8,
P.O. Box 21,
00-956,  
 Warsaw, Poland}
\address{Instytut Matematyki Politechniki \L\'odzkiej,
ul.\ W\'olcza\'nska 215, 90-924 \L\'od\'z, Poland}
\address{Departamento de An\'{a}lisis Matem\'{a}tico \\ Facultad de
 Ciencias \\ Universidad de Granada \\ 18071 Granada, Spain}

\email{\texttt{piotr.math@gmail.com}}

%
\subjclass{}
%
%
%
\begin{abstract} We construct a totally disconnected compact Hausdorff space
$K_0$ which has  clopen subsets $K_2\subseteq K_1\subseteq K_0$ such that
 $K_0$ is homeomorphic to $K_2$ and hence $C(K_0)$ is isometric as a Banach space
to $C(K_2)$ but
$C(K_0)$ is not isomorphic to $C(K_1)$. This gives two nonisomorphic Banach spaces of the form
$C(K)$ which are isomorphic to complemented subspaces of each other (even in the above strong
isometric sense),
providing a solution to the Schroeder-Bernstein problem for Banach spaces
of the form $C(K)$. $K_0$ is obtained as
a particular compactification of the  pairwise disjoint union of a sequence of $K$s for which
$C(K)$s have few operators. 
\end{abstract}

\maketitle

\markright{}
\vskip 26pt
\section{Introduction}
\vskip 13pt
If $X, Y$ are Banach spaces, then $X\sim Y$ will mean that they
are isomorphic, and $X\overset{c}{\hookrightarrow} Y$ that $Y$ has a complemented subspace isomorphic to
$X$.
 A. Pe\l czy\'nski has proved (see \cite{projections}, \cite{casazza})
that if two Banach spaces $X$, $Y$ satisfy
$X\sim X^2$, $Y\sim Y^2$, then  the following version of the Schroeder-Bernstein theorem holds for them:

$$\hbox{\rm If}\ \ X\overset{c}{\hookrightarrow} 
Y\ \ \hbox{\rm and} \ \ Y\overset{c}{\hookrightarrow} X,\ \ \hbox{\rm then} \ \ X\sim Y.$$

The problem whether this holds in general, without demanding that $X$ and $Y$ are 
isomorphic to their squares has been known as the Schroeder-Bernstein problem 
for Banach spaces (\cite{casazza}). 
After several decades it has been solved in the negative by T. Gowers in \cite{gowers}.
In this paper we give quite different construction of a Banach space which solves this problem
in the negative which additionally is a classical Banach space of real valued continuous
functions on a compact Hausdorff space with the supremum norm.

One of the main ingredients of Gowers' solution was the use of Banach spaces $X_{GM}$, 
obtained by Gowers himself and Maurey (\cite{gowersmaurey}), with
few operators in the sense that each bounded, linear operator on it is of the form
$\lambda Id+S$ where $S$ is strictly singular and $\lambda$ is a scalar.
The space of \cite{gowers} is an ``exotic" completion of $c_{00}(X_{GM}\oplus X_{GM})$ where
the norm in the sum is defined depending on the choice of a  basic sequence
in $X_{GM}$. Also the space $X_{GM}$ has quite a complex definition of the norm.
Other examples of Banach spaces which solve the Schroeder-Bernstein problem
in the negative were also based on spaces like $X_{GM}$  and were constructed 
 by Gowers and Maurey \cite{gowersmaurey2},  Galego
(\cite{eloi1}, \cite{eloi2}, \cite{eloi3}) or Galego and Ferenczi \cite{eloi4}.

When we deal with Banach spaces of  continuous functions on a compact Hausdorff space $K$,
we face, however, the simplest possible norm, the supremum norm and nontrivial examples are obtained
through nontrivial compact $K$s. In \cite{few} we considered Banach spaces 
of continuous functions with few operators in the sense that any operator on them  is
of the form $g Id+S$ where $g\in C(K)$ and $S$ is weakly compact (equivalently here, a
strictly singular operator) or such that the adjoint of any operator is of the form
$g Id+ S$ where $g$ is a Borel bounded function and $S$ is weakly compact on the dual.
The latter construction and its connected
modification was used in \cite{few} to obtain results analogous to
 some of the results of Gowers and Maurey but in the class of $C(K)$ Banach spaces, like
indecomposable Banach spaces or spaces nonisomorphic to their hyperplanes. The former
construction was obtained in \cite{few} under the assumption of the continuum hypothesis
but later was modified in \cite{plebanek} or \cite{iryna} to avoid this assumption.
Of course the former form of the operator is simpler and we will make use
of $C(K)$ Banach spaces with few operators in this sense.
For our purpose the construction of \cite{iryna} is the most convenient
because its $K$ is perfect, separable and totally disconnected and
the Boolean algebra  $Clop(K)$ of all clopen subsets of $K$ has a dense subalgebra
with many automorphisms.
On the other hand  by the results of \cite{centripetal} the
approach where we deal with spaces with few adjoint operators in the above sense
would have to work as well
but would be formally more complicated.

The general plan of the construction and what should work follows \cite{gowers}
with $X_{GM}$ replaced by a $C(K)$ with few operators.
However, all this must be achieved in the  $C(K)$-environment,
that is through an appropriate compactification of the topological disjoint sum of
compact $K_i$s rather than the definition of a norm. Note, also, that
if $K$ is infinite and metrizable then $C(K)$ is isomorphic to its square, so
we must deal with nonseparable $C(K)$s.
So, the realization of this general plan is quite far from \cite{gowers}.
Another complication is
that our spaces have few operators in a different sense that those of \cite{gowersmaurey},
this seems to be an obstacle which caused that we were unable to obtain  spaces which 
are isomorphic to their cube but not isomorphic to their square as in the paper \cite{gowers}.
If there could be such spaces of the form $C(K)$ still remains an open problem.

The main idea is to define a Banach space $X_+$ as a $C(K_+)$ where $K_+$ is an
appropriate compactification of the pairwise disjoint sum of clopen subsets  $K_n=K_{1,n}\cup K_{2,n}$ 
for $n\in \N$,
such that all operators from $C(K_n)$ into itself are of the form $fId+S$ where
$f\in C(K_n)$ and $S$ is weakly compact and all $K_n$ are pairwise homeomorphic.
It follows that all operators from $C(K_{i,n})$ into $C(K_{3-i,m})$ are
weakly compact, where $i=1,2$ and $n,m\in\N$.

$Y_+=\{f\in X_+: f|K_{1,0}=0\}\sim C(K_+\setminus K_{1,0})$. It is clear that
$Y_+$ is complemented in  $X_+$ and the compactification is defined
in such a way that $\{f\in X_+: f|K_1=0\}$ is a subspace of $Y_+$ isomorphic to $X_+$
and clearly complemented in $Y_+$. Thus we only face the proof of the fact that
$X_+$ and $Y_+$ are not isomorphic. 

Although $C(K_{i,n})$ and $C(K_{3-i,m})$ are quite incomparable (there are only weakly compact
and so strictly singular operators among them), we exploit the fact that
both of the algebras of all clopen sets of $K_{1,n}$ and $K_{2,n}$ have isomorphic dense
subalgebras. This allows us to compare $f_i\in K_{i,n}$ for $i=1,2$ using an automorphism of
these dense subalgebras. Finally the compactification $K_+$ is defined in such a
way that each $f\in C(K_+)$ can be identified with a sequence
of functions $f_n\in C(K_n)$ such that $f_n|K_{1,n}$ and $f_n|K_{2,n}$ get ``closer" to each other 
(in a sense defined using the dense subalgebras and an automorphism between them)
when $n$ tends to  infinity. This construction and the notation used
to  deal with it is described in Section 3.

The point of the proof that $X_+$ is not isomorphic to $Y_+$ is to prove that
any  isomorphism $T$ between these spaces would create infinitely many ``bumps" i.e., 
$T(f_n)|K_{1,m(n)}$s and $T(f_n)|K_{2,m(n)}$s would not get closer
to each other when $n$ and some $m(n)$ tend to  infinity.
This would give $T(f)$ outside of $X_+$ leading
to a contradiction with the existence of such an isomorphism. At least this is what happens when we
look at the shift of the part of $X_+$ corresponding to $\bigcup_{n\in \N} K_{1,n}$
which should not be a well-defined operator on $X_+$. It turns out that
all hypothetic isomorphisms from $Y_+$ to $X_+$ would share this behaviour
and so there are none of them.
This argument is the subject of the last section 
where a strengthening of a well-known fact that $\{x\in K: T^*(\delta_x)(\{x\})\not=0\}$
is at most countable, if $T$ is weakly compact is also proved (\ref{lemmaweaklycompacttrick}).

To be able to detect these bumps one approximate an operator $T:X_+\rightarrow X_+$
by a matrix of multiplications by continuous functions. This approximation is
developed in Section 5 which relies on Section 4 where we prove general properties
of operators on $X_+$. Some of the ideas 
of Section 4 which we use for dealing with compactifications of disjoint $K_i$s where
for each $i\in \N$ the space $C(K_i)$ has few operators were developed in
\cite{kmm1} and \cite{kmm2}.

The next Section 2 gathers some classical results about $C(K)$s and some of their
corollaries needed in the rest of the paper.

\vskip 26pt

\section{Some fundamental results on $C(K)$ spaces}

In this section we recall some fundamental results of Bessaga, Grothendieck, Pe\l czy\'nski
and Rosenthal concerning the Banach spaces $C(K)$. We will do quote them in the form
most convenient for the sake of this paper, also we make some simple corollaries.

\begin{theorem}\label{theoremdrewnowski}\cite{drewnowski}(Cor. 2)
Let $E$ be a Banach space and   $X$ be either $l_\infty$ or $c_0$ and let $T:X\rightarrow E$ be a linear
continuous operator. Then exactly one of the two possibilities holds:
\begin{enumerate}
\item $T(e_n)\rightarrow 0$
\item There is an infinite subset $M\subseteq \N$ such that $T|X(M)$ is
an isomorphism.
\end{enumerate}
\end{theorem}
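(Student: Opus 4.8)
The plan is to prove the dichotomy by a disjointification (gliding hump) argument applied to the adjoint. Write $X$ for $c_0$ or $l_\infty$, let $(e_n)$ be its unit vectors, and for $M\subseteq\N$ let $X(M)$ denote the closed subspace of vectors supported on $M$. First observe that the two alternatives are mutually exclusive: if $T|X(M)$ is an isomorphism onto its image for some infinite $M$, say $\|Ta\|\ge c\|a\|_\infty$ on $X(M)$ with $c>0$, then $\|Te_n\|\ge c$ for every $n\in M$, so (1) fails. Since (1) trivially holds when $T(e_n)\to 0$, it remains only to show: if $T(e_n)\not\to 0$, then (2) holds. So fix $\varepsilon>0$ and an infinite $M_0\subseteq\N$ with $\|Te_n\|\ge\varepsilon$ for all $n\in M_0$.

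By the Hahn--Banach theorem choose, for each $n\in M_0$, a norm-one functional $\phi_n\in E^*$ with $|\phi_n(Te_n)|\ge\varepsilon$, and set $\mu_n:=T^*\phi_n\in X^*$. Identify $X^*$ with a space of bounded measures on $\N$ --- the countably additive ones ($X^*=l_1$) if $X=c_0$, and the finitely additive ones ($X^*=ba(\mathcal P(\N))$) if $X=l_\infty$. Then $\|\mu_n\|\le\|T\|$ for all $n$, while the ``diagonal'' mass satisfies $|\mu_n(\{n\})|=|\langle\mu_n,e_n\rangle|=|\phi_n(Te_n)|\ge\varepsilon$.

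Now apply Rosenthal's lemma to the uniformly bounded sequence $(\mu_n)_{n\in M_0}$ and the pairwise disjoint singletons $(\{k\})_{k\in M_0}$, with parameter $\varepsilon/2$: there is an infinite $M\subseteq M_0$ with $|\mu_n|(M\setminus\{n\})\le\varepsilon/2$ for every $n\in M$. I claim $T|X(M)$ is an isomorphism onto its image. The upper bound $\|Ta\|\le\|T\|\,\|a\|_\infty$ is immediate. For the lower bound, take $a\in X(M)$ with $\|a\|_\infty=1$ and choose $n\in M$ with $|a_n|\ge 3/4$ (possible since $\sup_{k\in M}|a_k|=1$; for $X=c_0$ this supremum is in fact attained). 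Writing $a=a_ne_n+a\cdot\chi_{M\setminus\{n\}}$ and pairing with $\mu_n$, and using $\|a\cdot\chi_{M\setminus\{n\}}\|_\infty\le 1$, we get
\[ \|Ta\|\ \ge\ |\phi_n(Ta)|\ =\ |\langle\mu_n,a\rangle|\ \ge\ |a_n|\,|\mu_n(\{n\})|-|\mu_n|(M\setminus\{n\})\ \ge\ \tfrac{3}{4}\varepsilon-\tfrac{1}{2}\varepsilon\ =\ \tfrac{\varepsilon}{4}. \]
By homogeneity $\|Ta\|\ge(\varepsilon/4)\|a\|_\infty$ on all of $X(M)$, so $T|X(M)$ is bounded below, i.e.\ an isomorphism onto its image; this is alternative (2).

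The only real content is the disjointification step --- producing a single infinite $M$ on which all the $\mu_n$ $(n\in M)$ are simultaneously concentrated near their diagonal coordinate. This is precisely Rosenthal's lemma; alternatively, when $X=c_0$ one can argue by a direct diagonal extraction controlling the coordinates of $\mu_n\in l_1$ both below and above $n$, but invoking Rosenthal's lemma also handles the $l_\infty$ case for free. The Hahn--Banach selection, the trivial upper estimate, and the $\varepsilon$-bookkeeping are all routine.
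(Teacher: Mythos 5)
Your proof is correct, and it is essentially the classical argument (the paper itself gives no proof here -- it simply cites Drewnowski's Corollary 2, whose original proof runs along the same lines: Hahn--Banach functionals $\phi_n$, the adjoint measures $T^*\phi_n$, and a Rosenthal-type disjointification to get the lower bound on $X(M)$). The one point worth flagging is which form of Rosenthal's lemma you need: for $X=l_\infty$ the functionals $\mu_n$ are only finitely additive, and then $|\mu_n|(M\setminus\{n\})$ can strictly exceed $\sum_{k\in M\setminus\{n\}}|\mu_n|(\{k\})$, so the purely numerical version of the lemma (the form quoted as Theorem~\ref{theoremrosenthal1} in this paper) is not enough; you must invoke the set version for uniformly bounded finitely additive measures and a disjoint family of sets, which is what your phrasing $|\mu_n|(M\setminus\{n\})\le\varepsilon/2$ correctly does. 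With that reading, the $3/4$ versus $1/2$ bookkeeping and the homogeneity step are all in order.
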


\begin{theorem}\label{theoremrosenthal2}\cite{rosenthal}(Thm 1.3)
Let $B$ be a Banach space and $X$ an injective space.
Let $T:X\rightarrow B$ be an operator such that there exists a subspace
$A$ of $X$ isomorphic to $c_0(\Gamma)$, with $T|A$ an isomorphism.
Then there exists a subspace $Y$ of $X$ isomorphic to $l_\infty(\Gamma)$
with $T|Y$ an isomorphism.  
\end{theorem}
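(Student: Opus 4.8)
The plan is to reduce, using the injectivity of $X$, to a statement about $l_\infty(\Gamma)$ alone, then to handle countable $\Gamma$ directly with Drewnowski's dichotomy and general $\Gamma$ by passing to the adjoint and running a Rosenthal-type disjointification. First I would exploit injectivity: let $\psi\colon c_0(\Gamma)\to A\subseteq X$ be an isomorphism; since $c_0(\Gamma)$ sits isometrically in $l_\infty(\Gamma)$ and $X$ is injective, $\psi$ extends to an operator $\Psi\colon l_\infty(\Gamma)\to X$. Setting $S=T\Psi\colon l_\infty(\Gamma)\to B$, the restriction $S|c_0(\Gamma)=(T|A)\circ\psi$ is an isomorphism onto its image, so $\|Sx\|\geq\delta\|x\|$ for $x\in c_0(\Gamma)$ and some $\delta>0$. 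It now suffices to find a subspace $Z\subseteq l_\infty(\Gamma)$ isomorphic to $l_\infty(\Gamma)$ with $S|Z$ bounded below: then $\Psi|Z$ is bounded below (because $\|T\|\cdot\|\Psi z\|\geq\|Sz\|$), hence an isomorphism onto $Y:=\Psi(Z)\cong l_\infty(\Gamma)$, and $\|Ty\|=\|Sz\|$ is bounded below on $Y$, so $T|Y$ is an isomorphism, as required.

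When $\Gamma$ is countable, $l_\infty(\Gamma)=l_\infty$, and the whole matter is settled by Theorem \ref{theoremdrewnowski} applied to $S\colon l_\infty\to B$: since $e_n\in c_0$ has norm one we have $\|Se_n\|\geq\delta>0$, so alternative (1) of that theorem fails, and therefore there is an infinite $M\subseteq\N$ such that $S|l_\infty(M)$ is an isomorphism. As $l_\infty(M)$ is isometrically a copy of $l_\infty$, we may take $Z=l_\infty(M)$, and the reduction above concludes.

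For general $\Gamma$ one argues on the dual. Since $S|c_0(\Gamma)$ is bounded below, $(S|c_0(\Gamma))^*\colon B^*\to l_1(\Gamma)=c_0(\Gamma)^*$ is onto, so by the open mapping theorem one can choose $\phi_\gamma\in B^*$ with $\sup_\gamma\|\phi_\gamma\|<\infty$ and $\phi_\gamma(Se_\beta)=\delta_{\gamma\beta}$ for all $\beta\in\Gamma$. Identifying $l_\infty(\Gamma)^*$ with the bounded finitely additive measures on $\Gamma$, write $S^*\phi_\gamma=\widehat e_\gamma+\mu_\gamma$, where $\widehat e_\gamma$ is evaluation at $\gamma$; from $\langle S^*\phi_\gamma,e_\beta\rangle=\delta_{\gamma\beta}$ the remainder $\mu_\gamma$ vanishes on $c_0(\Gamma)$ (in particular on every finite set) and is uniformly bounded, and $\phi_\gamma(Su)=u(\gamma)+\int u\,d\mu_\gamma$ for every $u\in l_\infty(\Gamma)$. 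The crux is then a disjointification: to produce $M\subseteq\Gamma$ with $|M|=|\Gamma|$ and pairwise disjoint infinite sets $A_\gamma\ni\gamma$ ($\gamma\in M$) such that $|\mu_\gamma|(A_\gamma)<\epsilon$ and $|\mu_\gamma|\bigl(\bigcup_{\beta\in M\setminus\{\gamma\}}A_\beta\bigr)<\epsilon$ for every $\gamma\in M$. Granting this, let $Z=\{\sum_{\gamma\in M}a_\gamma\chi_{A_\gamma}:(a_\gamma)\in l_\infty(M)\}$; because the $A_\gamma$ are disjoint, $Z$ is an isometric copy of $l_\infty(M)\cong l_\infty(\Gamma)$, and for $v=\sum a_\gamma\chi_{A_\gamma}$ with $\|v\|=\sup_\gamma|a_\gamma|$ the two smallness estimates give $|\phi_\gamma(Sv)-a_\gamma|\leq 2\epsilon\|v\|$ for each $\gamma\in M$; hence $\|Sv\|\geq\|\phi_\gamma\|^{-1}(|a_\gamma|-2\epsilon\|v\|)$, and taking the supremum over $\gamma\in M$ shows that $S|Z$ is bounded below provided $\epsilon<1/2$. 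The reduction of the first step then completes the proof.

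Everything above except the disjointification is routine. For a \emph{countable} family $\{\mu_\gamma\}$ and a prescribed partition the disjointification is precisely Rosenthal's lemma; the genuine obstacle — and the real content one would be borrowing from \cite{rosenthal} — is to carry it out for uncountable $\Gamma$ while preserving $|M|=|\Gamma|$. This forces one to choose the blocks $A_\gamma$ simultaneously with the relabelling $M$ and to use the fact that each $|\mu_\gamma|$ charges only countably many members of any disjoint family; for countable $\Gamma$, of course, the Drewnowski argument avoids the issue altogether.
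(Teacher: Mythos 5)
The paper does not actually prove this statement: it is quoted verbatim from Rosenthal's 1970 paper (Theorem 1.3 of \cite{rosenthal}), so there is no internal proof to compare against. Your reduction is sound: extending $\psi$ by injectivity to $\Psi\colon l_\infty(\Gamma)\to X$, setting $S=T\Psi$, and observing that any $Z\subseteq l_\infty(\Gamma)$ isomorphic to $l_\infty(\Gamma)$ on which $S$ is bounded below yields $Y=\Psi(Z)$ with the required properties is exactly the right use of injectivity; and for countable $\Gamma$ the appeal to Theorem \ref{theoremdrewnowski} does finish the job, since $\|Se_n\|\geq\delta$ rules out alternative (1) there. It is also worth noting that the countable case is the only one this paper ever invokes (in Lemma \ref{lemmadnsimple} the theorem is applied to a sequence $(f_m)_{m\in\N}$, i.e.\ $\Gamma=\N$), so for the purposes of the paper your argument is complete.

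For general $\Gamma$, however, your write-up has a genuine gap, which you yourself flag: the simultaneous choice of $M\subseteq\Gamma$ with $|M|=|\Gamma|$ and of pairwise disjoint blocks $A_\gamma$ satisfying both $|\mu_\gamma|(A_\gamma)<\epsilon$ and $|\mu_\gamma|\bigl(\bigcup_{\beta\in M\setminus\{\gamma\}}A_\beta\bigr)<\epsilon$ is not a formal consequence of the countable Rosenthal lemma (Theorem \ref{theoremrosenthal1}); it requires a transfinite exhaustion argument using the fact you mention (each $|\mu_\gamma|$ can charge only countably many members of a disjoint family by more than any fixed $1/n$), and carrying it out while keeping $|M|=|\Gamma|$ is precisely the content of Rosenthal's own lemmas in \cite{rosenthal}. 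The surrounding dualization is all correct: $(S|c_0(\Gamma))^*$ is onto because $S|c_0(\Gamma)$ is bounded below, the open mapping theorem gives uniformly bounded $\phi_\gamma$, the decomposition $S^*\phi_\gamma=\widehat e_\gamma+\mu_\gamma$ with $\mu_\gamma$ vanishing on $c_0(\Gamma)$ follows from vanishing on singletons plus boundedness, and the $2\epsilon$-estimate on $Z$ gives bounded-below-ness for $\epsilon<1/2$. So: correct skeleton, complete for countable $\Gamma$, but for uncountable $\Gamma$ the key combinatorial step remains an imported black box rather than a proof.
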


\begin{definition}\label{definitiongeneratesczero}
 Let $K$ be a Hausdorff compact space.
Suppose that $(f_n)$ is a sequence from $C(K)$. We say that
it generates a copy of $c_0$ if and only if it is a basic sequence
which is equivalent to the standard basis of $c_0$.
\end{definition}

\begin{definition}
Let $K$ be a Hausdorff compact space. We say that $f,g\in C(K)$ are
disjoint if and only  if $fg=0$.
\end{definition}

\begin{fact}\label{factczero} Let $K$ be a Hausdorff compact space and $\varepsilon>0$.
Suppose that $(f_n)$ is a bounded, pairwise disjoint sequence of functions 
such that $||f_n||>\varepsilon>0$ for each $n\in\N$. Then $(f_n)$ generates 
a copy of $c_0$.
\end{fact}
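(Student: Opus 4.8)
**Proof proposal for Fact 1.9 (that a bounded, pairwise disjoint sequence $(f_n)$ with $\|f_n\|>\varepsilon$ generates a copy of $c_0$).**

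The plan is to verify directly that $(f_n)$ is equivalent to the unit vector basis of $c_0$; by Definition 1.7 this means producing constants showing that for every finitely supported sequence of scalars $(a_n)$ one has $c\sup_n|a_n|\le \|\sum_n a_nf_n\|\le C\sup_n|a_n|$, and that $(f_n)$ is a basic sequence. Let $M=\sup_n\|f_n\|<\infty$. First I would establish the upper estimate: since the $f_n$ are pairwise disjoint, at every point $x\in K$ at most one $f_n$ is nonzero, so $|\sum_n a_nf_n(x)|=|a_{n(x)}||f_{n(x)}(x)|\le M\sup_n|a_n|$, giving $\|\sum_n a_nf_n\|\le M\sup_n|a_n|$. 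For the lower estimate, fix $k$ in the support; choose $x_k\in K$ with $|f_k(x_k)|>\varepsilon$. Because $f_j(x_k)=0$ for all $j\ne k$ (pairwise disjointness), we get $|\sum_n a_nf_n(x_k)|=|a_k||f_k(x_k)|>\varepsilon|a_k|$, and taking the supremum over $k$ in the support yields $\|\sum_n a_nf_n\|\ge \varepsilon\sup_n|a_n|$. These two inequalities already give the equivalence of norms with constants $\varepsilon$ and $M$.

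It remains to check that $(f_n)$ is a genuine basic sequence — i.e. that the natural coordinate projections are uniformly bounded. For a finitely supported scalar sequence and $m\le N$, I would compare $\|\sum_{n\le m}a_nf_n\|$ with $\|\sum_{n\le N}a_nf_n\|$: by the lower estimate applied to the partial sum and the upper estimate applied to the full sum, $\|\sum_{n\le m}a_nf_n\|\le M\sup_{n\le m}|a_n|\le M\sup_{n\le N}|a_n|\le (M/\varepsilon)\|\sum_{n\le N}a_nf_n\|$, so the basis constant is at most $M/\varepsilon$, uniformly. Hence $(f_n)$ is basic and, combined with the previous paragraph, equivalent to the standard $c_0$-basis, which is exactly what Definitions 1.6 and 1.7 require.

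There is essentially no serious obstacle here; the argument is entirely elementary and rests only on the pointwise description of $\sum a_nf_n$ afforded by pairwise disjointness, so the ``hard part'' is merely being careful that the constants one extracts ($\varepsilon$ from below, $M$ from above) do not depend on the length of the scalar sequence or on which coordinates appear in its support. One should also note that convergence of $\sum_n a_nf_n$ for $(a_n)\in c_0$ (rather than merely finitely supported) follows from the same pointwise bound, since the partial sums form a Cauchy sequence in the supremum norm — this makes the identification of $\overline{\mathrm{span}}\{f_n\}$ with $c_0$ complete.
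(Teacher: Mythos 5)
Your proof is correct; the paper states this fact without proof (it is a classical observation), and your elementary verification is exactly the standard argument one would supply. The key points are all in order: pairwise disjointness in the sense $f_nf_m=0$ gives that at most one $f_n$ is nonzero at each point, yielding the upper bound $M\sup_n|a_n|$; evaluating at a point where $|f_k|$ exceeds $\varepsilon$ (which exists by compactness and continuity) gives the lower bound $\varepsilon\sup_n|a_n|$; and these two estimates together bound the basis constant by $M/\varepsilon$ and give convergence of $\sum_n a_nf_n$ for $(a_n)\in c_0$.
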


\begin{fact}\label{factczerodisjoint} Let $K$ be a Hausdorff compact space.
Suppose that $(f_n)_{n\in\N}$ is a sequence of functions 
such that $(f_n)_{n\in\N}$ generates 
a copy of $c_0$ and suppose that $a_n\subseteq K$ are clopen sets such that
$||f_n\chi_{a_n}||>\varepsilon$ for some $\varepsilon>0$.
Then there is an infinite $M\subseteq\N$ such that $(f_n\chi_{a_n})_{n\in M}$ generates 
a copy of $c_0$
\end{fact}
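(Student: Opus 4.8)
The plan is to exhibit $(f_n\chi_{a_n})_n$ as the image sequence of the $c_0$-unit vectors under a single bounded operator into $C(K)$, and then to read off the conclusion from Theorem~\ref{theoremdrewnowski} (Drewnowski's dichotomy) applied to that operator.

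First I would extract the boundedness estimate hidden in the hypothesis. Since $(f_n)$ generates a copy of $c_0$, the assignment $e_n\mapsto f_n$ extends to an isomorphism $J\colon c_0\to C(K)$ onto its range; then for every $\mu\in C(K)^*$ with $\|\mu\|\le1$ we have, using $c_0^*=\ell_1$,
\[
\sum_n|\mu(f_n)|=\sum_n|(J^*\mu)(e_n)|=\|J^*\mu\|_{\ell_1}\le\|J\|=:\lambda<\infty .
\]
Evaluating at the Dirac functionals $\delta_x$ gives $\sum_n|f_n(x)|\le\lambda$ for every $x\in K$. Since each $a_n$ is clopen, $g_n:=f_n\chi_{a_n}\in C(K)$, and for finitely supported scalars $(b_n)$
\[
\Big\|\sum_n b_n g_n\Big\|=\sup_{x\in K}\Big|\sum_n b_n f_n(x)\chi_{a_n}(x)\Big|\le\Big(\max_n|b_n|\Big)\sup_{x\in K}\sum_n|f_n(x)|\le\lambda\max_n|b_n| .
\]
Hence $S\colon c_0\to C(K)$, defined on finitely supported sequences by $S(\sum_n b_n e_n)=\sum_n b_n g_n$, is well defined, linear and bounded (with $\|S\|\le\lambda$), and $S(e_n)=g_n$.

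Now I would apply Theorem~\ref{theoremdrewnowski} to $S$, taking $X=c_0$ and $E=C(K)$. Because $\|S(e_n)\|=\|f_n\chi_{a_n}\|>\varepsilon>0$ for all $n$, alternative (1) of that theorem fails, so alternative (2) holds: there is an infinite $M\subseteq\N$ such that $S|c_0(M)$ is an isomorphism onto its range. This says precisely that $e_n\mapsto g_n$ $(n\in M)$ extends to an isomorphism from $c_0(M)$ onto $\overline{\operatorname{span}}\{g_n:n\in M\}$; equivalently $(f_n\chi_{a_n})_{n\in M}$ is a basic sequence equivalent to the standard unit vector basis of $c_0$, i.e.\ it generates a copy of $c_0$ in the sense of Definition~\ref{definitiongeneratesczero}.

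I do not expect a real obstacle: the one point that needs a little care is the uniform bound $\sup_{x\in K}\sum_n|f_n(x)|<\infty$, which is what makes $S$ bounded and is exactly where the full strength of ``equivalence with the $c_0$-basis'' (rather than mere pointwise or weak nullity of $(f_n)$) enters; the rest is a direct invocation of Theorem~\ref{theoremdrewnowski}. One could alternatively run a gliding-hump argument to produce pairwise disjoint approximants to a subsequence of $(f_n\chi_{a_n})$ and conclude via Fact~\ref{factczero}, but this seems strictly more laborious than the operator-theoretic route above.
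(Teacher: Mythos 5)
Your proof is correct and follows essentially the same route as the paper: build a bounded operator $c_0\to C(K)$ sending $e_n\mapsto f_n\chi_{a_n}$ and invoke Theorem~\ref{theoremdrewnowski}, with the only cosmetic difference being that you get the boundedness from the uniform bound $\sum_n|f_n(x)|\le\|J\|$ via duality, while the paper gets the same estimate by comparing $\sum\alpha_nf_n\chi_{a_n}(x)$ pointwise with $\sum\alpha_n^*(x)f_n(x)$ for a truncated coefficient sequence.
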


\begin{proof} Let $T:c_0\rightarrow C(K)$ be an isomorphism
such that $T(1_{n})=f_n$.
Note that for each $x\in K$ we have 
$$\sum_{n\in\N}\alpha_n f_n\chi_{a_n}(x)=\sum_{n\in\N}\alpha_n^*(x) f_n(x),$$
where $\alpha_n^*(x)=\alpha_n$ if $x\in a_n$ and $\alpha_n^*(x)=0$ otherwise, and so,
 such an $(\alpha_n^*(x))_{n\in N}$ belongs to $c_0$.
For every $\varepsilon>0$ and every $k_1<k_2$ from $\N$ there is an $x\in K$ such that
$$||\sum_{n=k_1}^{k_2}\alpha_n f_n\chi_{a_n}||\leq ||\sum_{n=k_1}^{k_2}\alpha_n^*(x) f_n||+\varepsilon\leq $$
$$\leq ||T|| ||\alpha_{k_1}^*(x),...,\alpha_{k_2}^*(x)||_{c_0}+\varepsilon
\leq ||T|| ||\alpha_{k_1},...,\alpha_{k_2}||_{c_0}+\varepsilon.$$
So, if $(\alpha_n)\in c_0$, we have that $||\alpha_{k_1},...,\alpha_{k_2}||_{c_0}$
goes to zero if $k_1$ and $k_2$ go to infinity, hence we may conclude that 
$\sum_{n\in\N}\alpha_n f_n\chi_{a_n}$ converges uniformly and so the limit is
continuous. A similar argument shows that $||\sum_{n\in\N}\alpha_n f_n\chi_{a_n}||\leq ||T|| 
||(\alpha_n)||_{c_0}$.
So we may conclude that
$$T'((\alpha_n))=\sum_{n\in\N}\alpha_n f_n\chi_{a_n}$$
is a bounded linear operator defined on $c_0$ and into $C(K)$.
Now we can use \ref{theoremdrewnowski} since $||T'(1_n)||=||f_n\chi_{a_n}||>\varepsilon$.

\begin{theorem}\label{theorempelczynski}\cite{singular}(Thm. 1)
 A linear bounded  operator on $T:C(K)\rightarrow C(K)$ is weakly compact if and only if 
it is strictly singular.
\end{theorem}

\begin{theorem}\label{theoremdiesteluhl}\cite{diesteluhl}
A linear bounded  operator on $T:C(K)\rightarrow C(K)$ is weakly compact if and only if 
for every bounded pairwise disjoint sequence $(f_n)\subseteq C(K)$
we have $T(f_n)\rightarrow 0$.

\end{theorem}

\begin{theorem}\label{theoremrosenthal1}\cite{rosenthal}
Suppose that $\varepsilon>0$ and  for each $n,k\in\N$ we have non-negative $m_{nk}\in \R$
and  $\sum_{n\in\N} m_{nk}<\varepsilon$ for each $k\in \N$.
Then for each $\delta>0$ there is an infinite $M\subseteq \N$ such that
$$\sum_{n\in M\setminus\{k\}}m_{nk}<\delta$$
for each $k\in M$.
\end{theorem}

\begin{corollary}\label{corollarysupremus}
Suppose $(f_n)$ is a  bounded pairwise disjoint sequence of 
elements of a $C(K)$. Suppose $(\zeta_n)_{n\in \N}$ 
is a bounded sequence in $C^*(K)$ and let $\varepsilon>0$. There is an infinite $M\subseteq \N$
such that whenever $M'\subseteq M$ 
and  the supremum $f_{M'}=\sup_{n\in M'}f_n$ exists in $C(K)$,
then $|\zeta_k(f_{M'})-\zeta_k(f_k)|<\varepsilon$ if $k\in M'$ and
$|\zeta_k(f_{M'})|<\varepsilon$ if $k\in M\setminus M'$
\end{corollary}

\begin{proof}
Let $\mu_k$ be the Radon measure on $K$ corresponding to $\zeta_k$.
Define $m_{nk}=|\mu_k|(f_n)$. Apply the Rosenthal lemma \ref{theoremrosenthal1}
for $\delta=\varepsilon$ obtaining an infinite $M_1\subseteq M$ such that
then $\sum_{n\in M_1\setminus\{k\}}|\zeta_k(f_{n})|<\varepsilon$ if $k\in M_1$.
Thus, it is enough to obtain an infinite $M\subseteq M_1$ such that 
whenever $M'\subseteq M$ 
and  the supremum $f_{M'}=\sup_{n\in M'}f_n$ exists in $C(K)$,
then for each $k\in \N$
$$\sum_{n\in M'}\zeta_k(f_{n})=\zeta_k(f_{M'}).$$
Note that 
$$\sum_{n\in M'}\zeta_k(f_{n})=\int \sum_{n\in M'} f_n d\mu_k,$$
and 
$\zeta_k(f_{M'})=\int  f_{M'} d\mu_k$ where $\sum_{n\in M'} f_n$ is taken pointwise and
is possibly not in $C(K)$. Let $\{\N_\xi:\xi<\omega_1\}$
be a family of infinite sets of $\N$ whose pairwise intersections are finite.
If none of them works as  $M$, there is
a $k\in\N$ and there are infinite $b_\xi\subseteq \N_\xi$ for uncountably many $\xi\in\omega_1$
such that
$$\int (f_{b_\xi}-\sum_{n\in b_\xi} f_n) d\mu_k\not=0$$
but this is impossible since the Borel functions which we integrate above are pairwise disjoint
as shown in the claim of 5.2. of \cite{few}.

\end{proof}

\begin{theorem}\label{theorembessagapelczynski}\cite{bessagapelczynski}
Let $X$ be a Banach space $(x_n)\subseteq X$ be a basic sequence and
$(x^*_n)\subseteq X^*$ a sequence biorthogonal to $(x_n)$.
If $(y_n)\subseteq X$ fulfills the condition
$$\sum_{n\in\N}||x_n-y_n||||x_n^*||<\delta<1,$$
Then $(y_n)$ is a basic sequence and $(x_n)$ and $(y_n)$ are equivalent.
\end{theorem}

\end{proof}

\vskip 26pt
\section{The construction and the notation}
\vskip 13pt
The Stone functor from the category of Boolean algebras and homomorphisms into
the category of compact Hausdorff spaces with continuous functions will be denoted $S$.
Thus $S(A)$ is the Stone space of the algebra $A$ and $S(h):S(B)\rightarrow S(A)$
is the continuous mapping induced by a homomorphism $h:A\rightarrow B$
of Boolean algebras and is given by
$S(h)(x)= h^{-1}(x)$.  If $a$ is an element of a Boolean algebra $A$,
the basic clopen set $\{u\in S(A): a\in u\}$ of the Stone space of
$A$ corresponding to $a$ will be denoted by $[a]$. The Boolean algebra
of clopen subsets of a compact space $K$ is denoted by $Clop(K)$.
$S(Clop(K))$ will be identufied with $K$ and $Clop(S(A)$ with $A$.
For more on  the Stone duality see Chapter 3 of \cite{koppelberg}.

\vskip 6pt
\begin{lemma}\label{lemmaexistence}
There is an infinite Boolean algebra $A$ whose  Stone space
$K$ is a sum $K_{1,*}\cup K_{2,*}$ for 
 $K_{1,*}$ and $K_{2,*}$ disjont and  clopen which satisfies the following:
\begin{enumerate}
\item $K$ is  separable (compact totally disconnected) perfect space,
\item every operator on $C(K)$ is of the form $T=fId+S$ where $f\in C(K)$
and $S$ is weakly compact, 
\item $C(K)$ contains no copy of $l_\infty$,
\item there is a dense subalgebra $B\subseteq A$ such that $K_{1,*}, K_{2,*}\in B$ and an automorphism
$j: B\rightarrow B$ such that $j(K_{3-i,*})=K_{i,*}$ for $i=1,2$ and $j^2=Id_{B}$,
\item suppose that $(a_n)$ is a sequence of clopen and pairwise disjoint subsets of $K$.
 There is an infinite $M\subseteq \N$ such that
there is in  $A$ the supremum $\sup_{n\in M}a_n=a$.
\end{enumerate}
\end{lemma}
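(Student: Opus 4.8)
The plan is to build the Boolean algebra $A$ together with its subalgebra $B$ and automorphism $j$ by transfinite recursion of length $\mathfrak{c}$ (or, if one prefers, under CH of length $\omega_1$, though the excerpt's reference to \cite{iryna} suggests doing it in ZFC), taking unions at limit stages, so that the final algebra is generated by a suitably ``generic'' tower of approximations. I would start from the free Boolean algebra on countably many generators split into two symmetric halves: fix a countable atomless algebra $B_0$ with a distinguished pair $K_{1,*},K_{2,*}$ partitioning the unit and a swap automorphism $j_0$ of order two exchanging them (e.g. take $B_0=C\times C$ for $C$ the countable atomless algebra, with $j_0$ the coordinate flip). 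The subalgebra $B$ of the final $A$ will just be (an isomorphic copy of) this $B_0$, or more precisely a countable atomless algebra carrying an involution $j$ with $j(K_{3-i,*})=K_{i,*}$; condition (4) is then essentially built in from the start and only needs to be \emph{preserved}, i.e. one must ensure that $B$ stays dense in $A$, which follows automatically because at every step we only add elements on top of $B_0$ while keeping $B_0$'s Stone space perfect, hence $B_0$ remains dense in $A$ (a countable dense subalgebra of an atomless algebra stays dense under any extension of the whole algebra as long as we do not collapse atoms — here there are none).

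The heart of the construction is arranging (2), the ``few operators'' property, and this is exactly the content of \cite{iryna} (and of \cite{few} under CH): one enumerates in a bookkeeping list all potential operators — more concretely, all candidate data coding an operator $T$ on $C(K)$, presented via its action on characteristic functions of clopen sets, together with finitely supported approximations — and at each step one either kills the possibility that $T$ fails to be of the form $fId+S$ (by adding a clopen set on which $T$ ``misbehaves'', producing a disjoint sequence witnessing non-weak-compactness in a way that later stages cannot repair), or one commits to $T$ being of that form. Using Theorem~\ref{theorempelczynski} and Theorem~\ref{theoremdiesteluhl} one reduces the target statement to: every operator sends bounded pairwise disjoint sequences to norm-null sequences modulo a multiplication; and one handles multiplications by noting that $C(K)$-valued multipliers are automatically continuous functions. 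The key combinatorial device feeding the recursion is the Rosenthal-type lemma (Theorem~\ref{theoremrosenthal1}) and its Corollary~\ref{corollarysupremus}, which let one thin out disjoint sequences so that the relevant measures behave additively; this is where most of the technical work lives, and it is the step I expect to be the main obstacle — getting the bookkeeping to simultaneously respect the involution $j$ (so that whatever clopen sets we throw in to defeat a bad operator are thrown in $j$-symmetrically when they lie over $B_0$, and otherwise do not interfere with $B_0$) while still defeating every operator.

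Conditions (1), (3), and (5) are then comparatively soft riders on the same recursion. Separability: at each step add only countably many new generators and also, at designated stages, add elements ensuring $K$ has a countable dense set — equivalently, keep $A$ of size $\mathfrak{c}$ and arrange a countable family of clopen sets separating points of a fixed countable set of ultrafilters that we promise will be dense; in practice one builds $K$ as a subspace of $2^\mathfrak{c}$ in the standard way so separability comes from the Hewitt--Marczewski--Pondiczery theorem. Perfectness: never create an atom, which is immediate since we always extend an atomless algebra by splitting, and $B_0$ atomless guarantees no isolated points survive. No copy of $\ell_\infty$: this is standard once (5) holds in the strong form that \emph{many} disjoint sequences have suprema, because a copy of $\ell_\infty$ inside $C(K)$ forces, via Theorem~\ref{theoremrosenthal2} applied to the inclusion and Fact~\ref{factczero}, an independent family of clopen sets of size continuum with no suprema, clashing with the genericity we built in; alternatively one simply adds, along the recursion, suprema for a cofinal set of disjoint sequences so that $Clop(K)$ is ``weakly $(\omega,\cdot)$-distributive enough'' to exclude $\ell_\infty$ — the excerpt's remark that this construction follows \cite{iryna} tells us (3) and (5) come packaged there. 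Finally (5) itself is arranged by, at each stage where a pairwise disjoint sequence $(a_n)$ first appears, choosing an infinite subsequence and declaring its supremum to exist, i.e. adding that supremum as a new element; a simple $\Delta$-system / fusion argument across the recursion shows these commitments are mutually consistent and do not resurrect atoms or destroy (2). Assembling all of this, the algebra $A=\bigcup_{\alpha<\mathfrak{c}}A_\alpha$ with $B=B_0$ and $j=j_0$ witnesses the lemma.
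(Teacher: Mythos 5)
The paper does not rebuild the algebra: it simply takes $A$ to be the algebra constructed in Chapter 3 of \cite{iryna}, quotes (1), (2), (5) from there, derives (3) from 2.4 of \cite{few} (a space containing $\ell_\infty$ is isomorphic to its hyperplanes, which (2) forbids), and the only thing it actually proves is (4). So the substance of your proposal should be judged on how you get (4), and that is where there is a genuine gap. You fix a \emph{countable} atomless $B_0$ with its involution at stage $0$ and assert that it ``stays dense under any extension of the whole algebra as long as we do not collapse atoms.'' That is false. Density of $B$ in $A$ means every nonzero $a\in A$ has a nonzero $b\in B$ below it; the moment the recursion adds an element independent from $B_0$ (which is exactly what adding a new generator does), no nonzero element of $B_0$ lies below it and density is destroyed. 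Worse, for the construction you say you are following, a countable dense subalgebra cannot exist at all: the algebra of \cite{iryna} contains $\mathrm{Clop}(\{0,1\}^{2^\omega})$ densely and is contained in $\mathrm{RO}(\{0,1\}^{2^\omega})$, so its Stone space has $\pi$-weight $\mathfrak{c}$. The paper's actual argument for (4) is to take $B$ to be the free algebra on $2^\omega$ generators (the clopen algebra of the Cantor cube), which is dense precisely because $A$ is sandwiched between it and the regular-open algebra, and which carries the required involution because free algebras have many automorphisms. Your version of (4) would need either to switch to this large $B$, or to impose density of $B_0$ as an explicit inductive requirement on every element ever added --- a requirement incompatible with adding independent generators, hence with the construction as you describe it.

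Two lesser remarks. First, your sketch of (2) --- ``kill each bad operator by adding a clopen set on which it misbehaves in a way later stages cannot repair'' --- glosses over the real difficulty (an operator on the final $C(K)$ is not visible at any stage $\alpha<\mathfrak{c}$, and anticipating all of them without CH is the whole point of \cite{plebanek} and \cite{iryna}); since the paper outsources this to \cite{iryna}, you are entitled to do the same, but you should say so rather than suggest the bookkeeping is routine, and you should note that the bookkeeping need not ``respect the involution'' at all once $B$ is taken as above. Second, your route to (3) via independent families and ``genericity'' is not an argument as written; the paper's one-line deduction from \cite{few} 2.4 together with (2) is both correct and simpler.
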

\begin{proof} Let $K$ be the Stone space of the algebra $A$ obtained in Chapter 3 of \cite{iryna}.
By 3.6.1, 3.6.2., 3.6.3., and the fact that $K$ has property ($K'$) (3.3.1.) we obtain (1) without
the perfectness, (2) and (5).
(3) follows from 2.4. of \cite{few} because every space which contain $l_\infty$ must have
its hyperplanes isomorphic to itself.
So, we are left with showing that 
there is a dense subalgebra $B$ and its automorphism $j$ as above.
$B$ will be a free Boolean algebra with $2^\omega$ independent generators. 
The algebra $A$ includes the algebra $B$ of clopen subsets of $\{0,1\}^{2^\omega}$
and is included the algebra of regular open subsets of $\{0,1\}^{2^\omega}$
hence $B$ is dense in $A$ (see \cite{iryna}. p. 57). As $B$ is 
a free Boolean algebra, it is clear that there is an automorphism and $K_{1,*}$, $K_{2,*}$
as required and that $K$ is prefect.

\end{proof}

\begin{remark} In the literature there are several
constructions of  spaces $C(K)$ where all operators are of the form $T=f Id+S$ 
where $f\in C(K)$ and $S$ is weakly compact: those obtained using some special set theoretic assumptions
in Section 6 of \cite{few}, or in \cite{rogerio} and having some additional properties or
without any additional set-theoretic assumptions spaces of \cite{plebanek} or \cite{iryna}.
The spaces of Section 6 of \cite{few} or of  \cite{rogerio} have countable subalgebras $B$ satysfying the above
theorem. The space \cite{plebanek} 
is only presented in the connected version and it is unclear to us
if its totally disconnected version would have some natural
dense subalgebras with appropriate automorphisms.
The separability of $K$ from \cite{iryna} 
is not used in any argument in this paper but it shows that
a $C(K)$ solution to the Schroeder-Bernstein problem could be a 
subspace of $l_\infty$. We conjecture that also the space of Section 3 of \cite{few}
obtained without any special set-theoretic assumptions 
can be used to obtain the main result of this paper, the results of
\cite{centripetal} support this conjecture, however it seems that it would
complicate the details. Thus it seems that the space of \cite{iryna} is the most optimal
for our purpose.
\end{remark}

\vskip 13pt Let $A$, $B$, $K$, $K_{1,*}$, $K_{2,*}$, $j$ be as in \ref{lemmaexistence}, moreover let

\begin{itemize}
\item $C=\{b\in B: j(b)=b\}=\{b\cup j(b): b\in B,\ [b]\subseteq K_{1,*}\}=
 \{j(b)\cup b: b\in B,\ [b]\subseteq K_{2,*}\}$

\item $L$ be the Stone space of $C$, 

\item $\tau=S(\subseteq): K\rightarrow L$ is the canonical surjection, where
$\subseteq: C\rightarrow A$ is the inclusion.

\item  $Z\equiv C(L)$ considered as a subspace
of $C(K)$ i.e., $Z=\{f\circ\tau: f\in C(L)\}$.
\end{itemize}

\begin{remark}\label{remarkbowtie}
Note that $\tau$ identifies, among others, the pairs $x,y$ of points of $K_{1,*}$ and $K_{2,*}$
respectively such that $S(j)(x\cap B)=y\cap B$. So in particular, for each $x\in K_{i,*}$ 
there is $y\in K_{3-i,*}$ such that $\tau(x)=\tau(y)$.
\end{remark}

As $K$ is  separable we may w.l.o.g. assume that
$A$ is  a subalgebra of $\wp(\N)$. For $n\in\N$ let 
\begin{itemize}
\item $\N_{n}$s be pairwise disjoint copies of $\N$,
\item $A_{n}$s be the copies
of $A$ in $\wp(\N_{n})$, 
\item $K_n$s be the (pairwise disjoint) Stone spaces of $A_n$s,
\item $K_{1,n}$ and $K_{2,n}$ be the copies of $K_{1,*}$ and $K_{2,*}$ in $K_{n}$,
\item $K^{-n}=\bigcup_{i\leq n}K_i$,
\item $B_n$ be the copy of $B$ in $A_n\subseteq \wp(\N_n)$,
\item $C_n$ be the copy of $C$ in $A_n\subseteq \wp(\N_n)$,
\item $L_n$ be the Stone space of $C_n$,
\item $\tau_n=S(\subseteq_n): K_n\rightarrow L_n$ be the canonical surjection, where
$\subseteq_n: C_n\rightarrow A_n$ is the inclusion.
\item  $j_{n}$ be the copy of $j$ on $B_n$.
\item $i_{n,*}: K\rightarrow K_n$, $i_{*,n}: K_n\rightarrow K$,    
$i_{m,n}: K_n\rightarrow K_m$ be homeomorphisms preserving  the above mentioned  objects respectively,

\item $h_{n,*}: L\rightarrow L_n$, $h_{*,n}: L_n\rightarrow L$,    
$h_{m,n}: L_n\rightarrow L_m$ be homeomorphisms.

\end{itemize}

 We will consider the following Boolean algebras:
\begin{itemize}
\item $D_\infty=\{a\subseteq \bigcup_{n\in\N} \N_{n}: \forall n\in \N\ \  a\cap \N_{n}\in A_{n}\},$
\item $D_0=\{a\in D_\infty: \exists m\in \N \ ( \forall n>m
\ \ a\cap \N_{n}=\emptyset)\  \hbox{or}\  (\forall n>m
\ \ a\cap \N_{n}=\N_{n}) \},$
\item $D_+=\{a\in D_\infty: \exists m\in \N \forall n>m
\ \ a\cap \N_{n}\in C_{n}
          \}.$
\end{itemize}
In other words, $D_\infty$ is the product algebra of $A_n$s,
$D_0$ is the direct sum algebra of $A_n$s,
and finally $D_+$ is the algebra of  those elements of $D_\infty$ whose coordinates eventually belong to $C_n$.

Let 
\begin{itemize}
\item $K_\infty$, $K_0$,  $K_+$ be the Stone spaces of $D_\infty$, $D_0$,  and
$D_+$ respectively. 
\end{itemize}
And finally let 
\begin{itemize}
\item $X_\infty=C(K_\infty)$, $X_+=C(K_+)$, $Z_n\equiv C(L_n)$ considered as a subspace
of $C(K_n)$ i.e., $Z_n=\{f\circ\tau_n: f\in C(L_n)\}$.
\item $X_0=C_0(K_0)=\{f\in C(K_0): f(\infty)=0\}$, where $\infty$
is the only ultrafilter of $D_0$ which does not contain any $\N_n$.
\item $K^{+n}=K_+\setminus K^{-n}$.

\end{itemize}

We also will need the following notation for some natural
projections and inclusions, let
\begin{itemize}
\item  $P_{n}: C(K_+)\rightarrow C(K_{n})$ be the restriction to $K_{n}$. 
\item  $I_{n}: C(K_n)\rightarrow C(K_+)$ be the extension from $K_{n}$ by zero function.
\item  $P_{-n}: C(K_+)\rightarrow C(K^{-n})$ be the restriction to $K^{-n}$. 
\item  $I_{-n}: C(K^{-n})\rightarrow C(K_+)$ be the extension from $K^{-n}$ by zero function.
\item  $P_{+n}: C(K_+)\rightarrow C(K^{+n})$ be the restriction to $K^{+n}$.
\item  $I_{+n}: C(K^{+n})\rightarrow C(K_+)$ be the extension from $K^{+n}$ by zero function.
\end{itemize}

The Stone-Weierstrass theorem
which implies that the functions which assume only finitely many values on clopen sets from 
the algebra $A$ are dense in the $C(K)$ where $K$ is the Stone space of $A$, gives the
idea about which functions belong to the spaces $X_0, X_+, X_\infty$.
In particular we have the following three descriptions :
\begin{lemma}
$$X_\infty\equiv \{(f_n)_{n\in N}\in \Pi_{n\in \N} C(K_{n}): 
(||f_n||)_{n\in \N}\ \ \hbox{is bounded}\}.$$
The norm is the supremum norm. 
\end{lemma}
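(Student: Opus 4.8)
The plan is to identify $C(K_\infty)$ with the $\ell_\infty$-type product of the spaces $C(K_n)$ via the Stone duality for the product algebra $D_\infty$. The key point is that $D_\infty$ is the product $\prod_{n\in\N} A_n$ of the Boolean algebras $A_n$, and an element $(f_n)_{n\in\N}$ of $\prod_{n\in\N} C(K_n)$ with $\sup_n\|f_n\|<\infty$ determines a unique bounded function on $K_\infty$, using the fact that the $K_n$ sit inside $K_\infty$ as pairwise disjoint clopen subsets whose union is dense.

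First I would make precise the embedding of each $K_n$ into $K_\infty$: to the element $\N_n\in D_\infty$ corresponds the basic clopen set $[\N_n]\subseteq K_\infty$, and since $\{a\subseteq\N_n : a\in A_n\}$ is exactly the relative algebra $D_\infty\restriction \N_n$, the Stone space of the latter is homeomorphic to $[\N_n]$, which is canonically $K_n$. The sets $[\N_n]$ are clopen and pairwise disjoint (because $\N_n\cap\N_m=\emptyset$ for $n\ne m$), and $\bigcup_n[\N_n]$ is dense in $K_\infty$: any nonempty basic clopen $[a]$ with $a\in D_\infty$, $a\ne\emptyset$, meets some $\N_n$ in a nonempty set of $A_n$ (this uses only that the $A_n$ are atomless or at least nontrivial, which holds as $K$ is perfect), hence $[a]\cap[\N_n]\ne\emptyset$. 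Thus every $g\in C(K_\infty)$ is determined by its restrictions $g_n=g\restriction[\N_n]\in C(K_n)$, and $\|g\|=\sup_n\|g_n\|$ is automatically finite, giving the inclusion $\subseteq$ of the claimed identity.

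For the reverse inclusion I would start from a bounded sequence $(f_n)$ with $f_n\in C(K_n)$ and $\sup_n\|f_n\|=M<\infty$, and show it defines a continuous function $g$ on $K_\infty$ with $g\restriction[\N_n]=f_n$. The natural route is via Stone--Weierstrass as the paper suggests: first handle the case where each $f_n$ is a finite linear combination of characteristic functions of clopen subsets of $K_n$ coming from $A_n$ — then $(f_n)$ glues to a genuine element of the dense subalgebra of simple $D_\infty$-measurable functions on $K_\infty$ (a function constant on each $[a]$ for $a$ ranging over a finite partition of $\bigcup_n\N_n$ in $D_\infty$ — note such functions are exactly those simple functions that are continuous on $K_\infty$). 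For general $(f_n)$ one approximates: given $\varepsilon>0$, pick for each $n$ a simple function $s_n$ with $\|f_n-s_n\|<\varepsilon$ and $\|s_n\|\le M+1$; the glued simple function $s=(s_n)_n$ lies in $C(K_\infty)$, and the glued functions for a Cauchy-in-$\varepsilon$ sequence of such approximations converge uniformly on $K_\infty$ (since the sup over $n$ of the fiberwise distances is controlled), so the limit $g\in C(K_\infty)$ satisfies $g\restriction[\N_n]=f_n$. The map $(f_n)\mapsto g$ is clearly linear and isometric onto its image, and the first part shows it is onto, establishing the identity and that the norm is the supremum norm.

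The main obstacle is the density argument and the gluing of simple functions: one must check carefully that the finitely-valued functions that are continuous on $K_\infty$ are precisely those of the form $(s_n)_n$ where $s_n\in C(K_n)$ is $A_n$-simple and only finitely many distinct "global" values occur — i.e. that continuity across the compactification $K_\infty$ imposes no constraint beyond fiberwise continuity and a global bound, which is exactly because $\prod_n A_n$ really is the full product and places no coherence requirement on the coordinates (unlike $D_+$, where the eventual-$C_n$ condition will matter). Once that is pinned down, the uniform-approximation step is routine, so I would spend the bulk of the write-up on the structure of $D_\infty$-simple continuous functions and the density of $\bigcup_n[\N_n]$.
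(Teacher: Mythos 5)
Your plan is correct and is essentially the Stone--Weierstrass/Stone-duality argument the paper has in mind (the paper states this lemma without proof, merely pointing at the density of functions taking finitely many values on clopen sets of the underlying algebra). The one point you flag as needing care is indeed the whole content: coordinatewise simple approximants $s_n$ chosen independently need not glue to a $D_\infty$-simple function, so one must first round their values into a common finite $\varepsilon$-net of $[-M-1,M+1]$, after which each level set assembles into an element of $D_\infty$ and the finitely many of them form a partition of $\bigcup_n\N_n$, making the glued function genuinely continuous on $K_\infty$.
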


\begin{lemma}
$$X_0\equiv \{(f_n)_{n\in N}\in \Pi_{n\in \N} C(K_{n}): 
\lim_{n\rightarrow \N}||f_n||=0\}.$$
The norm is the supremum norm. 
\end{lemma}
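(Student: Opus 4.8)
The plan is to show that the coordinate‑restriction map
$$R\colon X_0\longrightarrow\textstyle\prod_{n\in\N}C(K_n),\qquad R(f)=(f|K_n)_{n\in\N},$$
is a linear isometry, when $\prod_n C(K_n)$ carries the supremum norm $\|(g_n)_n\|=\sup_n\|g_n\|$, whose range is precisely $\{(f_n)_n:\lim_{n\to\infty}\|f_n\|=0\}$; since $R$ obviously respects the norms claimed on the two sides, this gives the asserted identification. The essential preliminary is to describe the topology of $K_0$ near its distinguished point $\infty$, the unique ultrafilter of $D_0$ containing no $\N_n$. Since an ultrafilter of a Boolean algebra contains a finite join iff it contains one of the joinands, one checks that for $a\in D_0$ one has $a\in\infty$ exactly when $a$ is \emph{eventually full}, i.e.\ $a\cap\N_n=\N_n$ for all $n$ past some $m$: an eventually empty $a$ lies below the finite join $\bigcup_{i\le m}\N_i$, none of whose joinands is in $\infty$, whereas an eventually full $a$ lies above $\bigcup_{n>m}\N_n$, which is in $\infty$ because its complement $\bigcup_{i\le m}\N_i$ is not. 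Consequently $K_n=[\N_n]$ is clopen in $K_0$, one has $K_0\setminus\{\infty\}=\bigcup_n K_n$ (every ultrafilter $\ne\infty$ contains some $\N_n$), and the clopen sets $K_0\setminus K^{-m}=[\bigcup_{n>m}\N_n]=\{\infty\}\cup\bigcup_{n>m}K_n$ form a neighbourhood base at $\infty$.

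Granting this, that $R$ is a well‑defined linear isometry is immediate: each $f|K_n$ is continuous on the clopen set $K_n$, $R$ is linear, and since $K_0$ is, as a set, the disjoint union of the $K_n$ together with the single point $\infty$ at which $f\in X_0$ vanishes, $\|f\|_{C(K_0)}=\max(\sup_n\|f|K_n\|,\,|f(\infty)|)=\sup_n\|f|K_n\|$. In particular $R$ is injective and $R(X_0)$ is a closed subspace of $\prod_n C(K_n)$.

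Next I would check that $R$ maps $X_0$ into $\{(f_n)_n:\|f_n\|\to0\}$, and that this map is onto. For the first: given $f\in X_0$ and $\varepsilon>0$, the open set $\{x\in K_0:|f(x)|<\varepsilon\}$ contains $\infty$, hence contains some basic clopen neighbourhood $[a]$ of $\infty$; by the first paragraph $a\in D_0$ is eventually full, so $K_n\subseteq[a]$ and therefore $\|f|K_n\|\le\varepsilon$ for all large $n$, i.e.\ $\lim_n\|f|K_n\|=0$. For surjectivity: given $(f_n)_n$ with $\|f_n\|\to0$, define $f\colon K_0\to\R$ by $f|K_n=f_n$ and $f(\infty)=0$; then $f$ is continuous at each point of the open set $K_n$ because $f_n\in C(K_n)$, and continuous at $\infty$ because, choosing $m$ with $\|f_n\|<\varepsilon$ for $n>m$, one has $|f|<\varepsilon$ on the clopen neighbourhood $K_0\setminus K^{-m}$ of $\infty$. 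Hence $f\in C(K_0)$ with $f(\infty)=0$, so $f\in X_0$ and $R(f)=(f_n)_n$. (Alternatively, once $R$ is known to be an isometry into this space, surjectivity follows since $R(X_0)$ is closed and contains every eventually‑zero sequence of continuous functions, and those are dense — which is exactly the content of the Stone--Weierstrass remark preceding the statement, a $D_0$‑simple function vanishing at $\infty$ being supported on some $K^{-m}$.)

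The argument is essentially bookkeeping; the only place where the specific definition of $K_0$ really enters — and the step I would take most care with — is the identification of which elements of $D_0$ lie in the ultrafilter $\infty$, equivalently the statement that the sets $K_0\setminus K^{-m}$ form a neighbourhood base at $\infty$. Everything after that is a routine passage between continuous functions on a clopen partition and sequences of their restrictions.
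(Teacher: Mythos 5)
Your proof is correct and complete; the paper itself gives no proof of this lemma, treating it as an immediate consequence of the Stone--Weierstrass remark preceding it (i.e.\ that $K_0$ is the one-point compactification of $\bigsqcup_n K_n$, which is exactly what your identification of the ultrafilter $\infty$ and the neighbourhood base $K_0\setminus K^{-m}$ establishes). Your parenthetical alternative via density of the eventually-zero $D_0$-simple functions is precisely the route the paper is gesturing at, so the two approaches coincide in substance.
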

For $f\in C(K_n)$ denote by $d_n(f)$ the distance
between $f$ and the subspace $Z_n\subseteq C(K_n)$. Thus we have:

\begin{lemma}\label{lemmacharacterizationxplus}
$$X_+\equiv \{(f_n)_{n\in N}\in \Pi_{n\in \N} C(K_{n}): (||f_n||)_{n\in \N}\ \ \hbox{is bounded and}\ 
\lim_{n\rightarrow\infty}
d_n(f_n)=0\}.$$
The norm is the supremum norm. 
\end{lemma}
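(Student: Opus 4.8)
The plan is to use the Stone--Weierstrass theorem to reduce everything to simple functions (functions assuming finitely many values, each on a clopen set from the relevant algebra) and then match the two descriptions. Concretely, I would begin by noting that $C(K_+)$ is by definition the space of continuous real-valued functions on $K_+ = S(D_+)$, and that the simple functions based on clopen sets $[a]$ for $a \in D_+$ are uniformly dense in $C(K_+)$. Similarly, each $C(K_n) = C(S(A_n))$ has its simple functions (based on $A_n$) dense, and $Z_n = C(L_n)$ sits inside $C(K_n)$ as exactly the closure of simple functions based on $C_n \subseteq A_n$; hence for a simple function $f$ based on $A_n$, its distance $d_n(f)$ to $Z_n$ is controlled by how far $f$ is from being constant on the fibres of $\tau_n$.

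The key structural observation is that a clopen subset of $K_+$ corresponds to an element $a \in D_+$, and by the definition of $D_+$ there is an $m$ such that $a \cap \N_n \in C_n$ for all $n > m$. This means: for the natural identification of $f \in C(K_+)$ with the sequence $(f_n)_{n\in\N}$ where $f_n = P_n(f) = f|K_n$, a simple function $f$ on $K_+$ has the property that $f_n \in Z_n$ for all but finitely many $n$, and of course $(\|f_n\|)_n$ is bounded (by $\|f\|$). So the right-to-left inclusion for simple functions, and hence the map from $C(K_+)$ into the space on the right-hand side, is clear; one checks it is an isometry since the sup norm on $K_+$ is the sup over $n$ of the sup norms on the $K_n$ (the $K_n$ are clopen in $K_+$ and their union is dense). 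For the converse, given a bounded sequence $(f_n)$ with $d_n(f_n) \to 0$, I would approximate: fix $\varepsilon > 0$, pick $N$ with $d_n(f_n) < \varepsilon$ for $n > N$, choose $g_n \in Z_n$ with $\|f_n - g_n\| < \varepsilon$ for $n > N$ and set $g_n = f_n$ for $n \le N$; then approximate each of the finitely many $f_n$ ($n \le N$) and each $g_n$ ($n > N$) by simple functions based on $A_n$ respectively $C_n$ within $\varepsilon$ — but this needs care because there are infinitely many $g_n$.

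The main obstacle is precisely this last point: producing a single genuine element of $C(K_+)$, i.e. a uniformly continuous function on the Stone space, rather than just an element of the abstract product space. The resolution is that a bounded sequence $(g_n)$ with $g_n \in Z_n$ for all $n > N$ does define a function on $K_+$ that is continuous: $K_+$ is the Stone space of $D_+$, and such a sequence is continuous on the dense open set $\bigcup_n K_n$ and extends continuously to the "points at infinity" exactly because of the $d_n \to 0$ / eventually-in-$C_n$ condition — one argues that on any basic clopen neighbourhood $[a]$ of a point at infinity (so $a \cap \N_n \in C_n$ for large $n$) the oscillation of $(g_n)$ is small, using that $g_n$ is $\tau_n$-factorable and hence constant-to-within-$\varepsilon$ on the $C_n$-clopen pieces. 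So the proof structure is: (i) Stone--Weierstrass density of simple functions on both sides; (ii) identify simple functions on $K_+$ with eventually-$C_n$ sequences via the definition of $D_+$; (iii) check the correspondence is isometric and norm-dense from both directions; (iv) take uniform limits, using that uniform limits of elements of the RHS space stay in it (the RHS is norm-closed — straightforward — and uniform limits of continuous functions on $K_+$ are continuous). Steps (i), (iii), (iv) are routine given the earlier setup and the general Stone-duality facts cited from \cite{koppelberg}; step (ii), unpacking the definition of $D_+$ against the continuity requirement at the points of $K_+ \setminus \bigcup_n K_n$, is where the real content lies, and I would spell that out carefully.
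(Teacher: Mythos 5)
Your argument is correct and follows the route the paper itself indicates (the lemma is stated there without proof, merely invoking Stone--Weierstrass density of the $D_+$-simple functions): identify $D_+$-simple functions with bounded sequences that are eventually $C_n$-simple, check that the restriction map $f\mapsto (f|K_n)$ is isometric with dense image on both sides, and pass to uniform limits. The one point to tighten is your continuity-at-infinity claim: the oscillation of $(g_n)$ need not be small on \emph{every} basic clopen neighbourhood $[a]$ of a point at infinity (constants $g_n=(-1)^n$ already give oscillation $2$ on all of them, yet are continuous), but for each such point and each $\varepsilon>0$ one can produce \emph{some} $a\in D_+$ on which it is --- e.g.\ approximate each $g_n$ by a $C_n$-simple function taking values in one fixed finite $\varepsilon$-net of $[-\sup_n\|g_n\|,\sup_n\|g_n\|]$, so that the level sets of the combined function lie in $D_+$ and one of them is the required neighbourhood --- which is all that continuity requires.
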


We will often denote elements of the above spaces using the above representations i.e.,
as a sequence $(f_n)$. 
Then, under the appropriate identification
we have  $X_0\subseteq X_+\subseteq X_\infty$ which will be used as well.

\begin{definition} For $x,y\in K_n$ we say that $x\bowtie y$
if and only if   $\tau_n(x)=\tau_n(y)$.
\end{definition}

\begin{lemma}\label{lemmadistance} Suppose that $f\in C(K_n)$. Then 
$$d_n(f)={\sup\{ {{|f(x)-f(y)|}}:
x,y\in K_{n},\ 
   x\bowtie y   \}\over 2}.$$       
\end{lemma}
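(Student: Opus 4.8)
The plan is to realize the quotient map $\tau_n:K_n\to L_n$ explicitly and compute $d_n(f)$ as a distance to the image of the induced isometric embedding $C(L_n)\hookrightarrow C(K_n)$. Recall that $Z_n$ consists exactly of those $g\in C(K_n)$ that are constant on each fibre $\tau_n^{-1}(z)$, $z\in L_n$; equivalently, since $x\bowtie y$ means precisely $\tau_n(x)=\tau_n(y)$, $Z_n=\{g\in C(K_n): g(x)=g(y) \text{ whenever } x\bowtie y\}$. (This uses that $\tau_n$ is a continuous surjection onto a compact Hausdorff space, so $C(L_n)\to C(K_n)$, $h\mapsto h\circ\tau_n$, is an isometry onto the subalgebra of functions constant on fibres; that the fibres are exactly the $\bowtie$-classes is the definition.) I would first record this identification as the starting point.

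For the lower bound $d_n(f)\ge \sup\{|f(x)-f(y)|: x\bowtie y\}/2$: given any $g\in Z_n$ and any $x\bowtie y$, we have $g(x)=g(y)$, so
$$|f(x)-f(y)| = |(f-g)(x)-(f-g)(y)| \le 2\|f-g\|,$$
and taking the supremum over $x\bowtie y$ and then the infimum over $g\in Z_n$ gives the inequality. For the upper bound one must produce, for each $\varepsilon>0$, a $g\in Z_n$ with $\|f-g\|\le \sup\{|f(x)-f(y)|:x\bowtie y\}/2 + \varepsilon$. The natural candidate is the function on $L_n$ given by averaging the extremes on each fibre: for $z\in L_n$ set
$$\tilde g(z) = \tfrac12\Big(\max_{x\in\tau_n^{-1}(z)} f(x) + \min_{x\in\tau_n^{-1}(z)} f(x)\Big),$$
and $g=\tilde g\circ\tau_n$. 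On each fibre, $\|f-g\|$ restricted to that fibre equals $\tfrac12(\max-\min)\le \tfrac12\sup\{|f(x)-f(y)|:x\bowtie y\}$, so pointwise $g$ does the job; the only issue is continuity of $\tilde g$ on $L_n$.

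The main obstacle is therefore exactly this continuity claim — showing $\tilde g\in C(L_n)$, i.e. that $z\mapsto \max_{\tau_n^{-1}(z)}f$ and $z\mapsto\min_{\tau_n^{-1}(z)}f$ are continuous. Upper semicontinuity of the max (and lower semicontinuity of the min) follows from compactness of $K_n$ and continuity of $\tau_n$ (a standard argument: $\{z:\max_{\tau_n^{-1}(z)}f\ge c\}=\tau_n(\{x:f(x)\ge c\})$ is the image of a compact set, hence closed). The reverse semicontinuity is where the structure of $\tau_n$ must be used: since $K_n$ is totally disconnected and $\tau_n=S(\subseteq_n)$ comes from an inclusion of Boolean algebras $C_n\subseteq A_n$, I would argue via clopen sets — a basic clopen neighbourhood in $L_n$ pulls back to a clopen set of $K_n$ of the form $\tau_n^{-1}[c]=[c]$ for $c\in C_n$, and one can separate fibres from nearby points using clopen sets in $C_n$; alternatively, $\tau_n$ is a closed map (being continuous with compact domain and Hausdorff range), and for closed surjections both $z\mapsto\max$ and $z\mapsto\min$ of a continuous function over the fibres are continuous. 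Granting this, $g\in Z_n$ and $\|f-g\|\le\tfrac12\sup\{|f(x)-f(y)|:x\bowtie y\}$, which combined with the lower bound (where the $+\varepsilon$ is even unnecessary) yields the exact equality. One technical point to handle: if some fibre is a singleton or if the supremum over $\bowtie$-pairs is attained/not attained, the formulas still make sense since $K_n$ is compact and $f$ continuous, so all maxima and minima exist.
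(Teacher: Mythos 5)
Your lower bound is exactly the paper's argument and is correct. The gap is in the upper bound, at precisely the point you flag as ``the main obstacle'' and then grant: the fibre-wise midpoint function $\tilde g$ need not be continuous, and the general principle you invoke to rescue it is false. For a continuous (hence closed) surjection $\pi$ of compact Hausdorff spaces, $z\mapsto\max_{\pi^{-1}(z)}f$ is upper semicontinuous (your image-of-a-closed-set argument), but it is in general \emph{not} lower semicontinuous: $\{z:\max_{\pi^{-1}(z)}f>c\}=\pi(\{x:f(x)>c\})$ is the image of an \emph{open} set, and a closed map need not send open sets to open sets. A Stone-duality counterexample directly analogous to $\tau_n$: let $K$ be $\N$ with two points $p_0,p_1$ adjoined at infinity (limits along the odds and along the evens, respectively, i.e.\ the Stone space of the algebra generated by the finite sets and the set of evens), let $L$ be the one-point compactification of $\N$, and let $\pi$ collapse $\{p_0,p_1\}$ to $\infty$; take $f$ continuous with $f(2k)\to 1=f(p_1)$ and $f(2k+1)\to 0=f(p_0)$. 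Then $\tilde g(n)=f(n)$ oscillates between values near $0$ and near $1$ while $\tilde g(\infty)=1/2$, so $\tilde g$ is not continuous --- even though the conclusion $d(f,Z)=s/2=1/2$ does hold, witnessed by a different $g$. Nothing in the structure of $\tau_n$ (the Stone dual of a non-dense inclusion $C_n\subseteq A_n$, with large fibres) excludes this behaviour, and your clopen-separation alternative is only sketched, not carried out.

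The repair, which is what the paper does, is to aim only for $\|f-g\|\le\delta/2$ for each $\delta>s$ rather than for the exact fibre-wise optimum. Since $\operatorname{diam} f[\tau_n^{-1}(\{t\})]\le s<\delta$ for every $t\in L_n$, compactness and continuity yield a \emph{finite clopen partition} $U_1,\dots,U_k$ of $L_n$ with $\operatorname{diam} f[\tau_n^{-1}[U_m]]<\delta$ for each $m$; the function $g$ that is constant on each $\tau_n^{-1}[U_m]$, equal to the midpoint of the extreme values of $f$ there, is locally constant (hence continuous), constant on fibres (hence in $Z_n$), and satisfies $\|f-g\|\le\delta/2$. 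Letting $\delta\downarrow s$ gives $d_n(f)\le s/2$ with no claim that the infimum is attained. Your argument becomes correct once the exact fibres are replaced by such a finite clopen coarsening of the fibre decomposition.
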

\begin{proof}
Let $s$ be the supremum from the lemma. If $g\in Z_n$, then
clearly $g(x)=g(y)$ for any $x,y$ such that $x\bowtie y$
 we obtain $d_n(f)\geq s/2$.

Now let $f\in C(K_n)$ and 
 $\delta>s$. 
It is possible to obtain a clopen finite partition $U_1,..., U_k$ of $L_n$
such that $diam(f[\tau_n^{-1}[U_m]])<\delta$ for each $m=1,..., k$. 
This follows from the fact that $diam(f[\tau_n^{-1}[\{t\}]])\leq s$ 
for each $t\in L_n$ and the continuity of $f$ as well as the compactness of the
spaces involved. Now define $g\in Z_n$ as 
a function which is constant on each $\tau_n^{-1}[U_m]$ and assumes on it 
the arithmetic average of the extrema of the values of $f[\phi^{-1}[U_m]])$.
This way $||f-g||\leq \delta/2$ and hence
$d_n(f)\leq s/2$.\par

\end{proof}

\begin{corollary}\label{corollary1assymptotic}
Suppose  that $g\in Z_n$ and that $f\in C(K_n)$    
is arbitrary such that
 $||f-g||< \varepsilon$. Suppose that $x,y\in K_{n}$  
are such that $x\bowtie y$. Then $|f(x)-f(y)|<2\varepsilon$.\par
\end{corollary}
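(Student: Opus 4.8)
The plan is to deduce this directly from Lemma~\ref{lemmadistance} together with the triangle inequality. First I would note that, since $g\in Z_n$ and $Z_n$ is the set of functions factoring through $\tau_n$, we have $g(x)=g(y)$ whenever $x\bowtie y$; this is exactly the observation already used in the first paragraph of the proof of Lemma~\ref{lemmadistance}. Hence for $x\bowtie y$ one writes $|f(x)-f(y)|\leq |f(x)-g(x)|+|g(x)-g(y)|+|g(y)-f(y)|=|f(x)-g(x)|+|g(y)-f(y)|\leq 2\|f-g\|<2\varepsilon$, which is the claimed bound.

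Alternatively, and perhaps more in the spirit of placing the corollary right after the lemma, I would invoke Lemma~\ref{lemmadistance} to say $d_n(f)=s/2$ where $s=\sup\{|f(x)-f(y)|: x\bowtie y\}$, and observe that $\|f-g\|<\varepsilon$ with $g\in Z_n$ gives $d_n(f)\leq \|f-g\|<\varepsilon$, so $s<2\varepsilon$, and therefore $|f(x)-f(y)|\leq s<2\varepsilon$ for every pair $x\bowtie y$. Either route is a one-line argument; the first is slightly more self-contained since it does not even need the nontrivial direction of Lemma~\ref{lemmadistance}, only the elementary fact that members of $Z_n$ are constant on $\bowtie$-classes.

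There is essentially no obstacle here: this is a routine corollary recording, in a form convenient for later sections, that closeness to the subspace $Z_n$ forces the oscillation of $f$ across each fiber of $\tau_n$ to be small. The only mild point of care is the strictness of the inequality — starting from $\|f-g\|<\varepsilon$ one does get the strict bound $|f(x)-f(y)|<2\varepsilon$ via the triangle inequality, so no $\varepsilon/2$ fudging is needed. I would present the short triangle-inequality proof.

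\begin{proof}
Since $g\in Z_n$, there is $\bar g\in C(L_n)$ with $g=\bar g\circ\tau_n$; hence $g(x)=g(y)$ whenever $x\bowtie y$, i.e.\ whenever $\tau_n(x)=\tau_n(y)$. Therefore, for such $x,y$,
$$|f(x)-f(y)|\leq |f(x)-g(x)|+|g(x)-g(y)|+|g(y)-f(y)|\leq 2\,\|f-g\|<2\varepsilon,$$
as required.
\end{proof}
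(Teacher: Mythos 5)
Your proof is correct, and since you explicitly note both the direct triangle-inequality argument and the deduction from Lemma~\ref{lemmadistance} (which is all the paper does — it simply says the corollary follows from that lemma), you have covered the paper's route. The two arguments are essentially the same one-line observation that elements of $Z_n$ are constant on $\bowtie$-classes, so nothing further is needed.
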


\begin{proof}
Follows directly from the previous lemma.
\end{proof}

\begin{corollary}\label{corollary2assymptotic}

Suppose that $(f_n)_{n\in\N}\in X_+$ and $x_n,y_n\in K_{n}$,
are such that $x_n\bowtie y_n$. Then
$|f_n(x_n)-f_n(y_n)|$ converges to $0$ as $n$  tends to infinity.
\end{corollary}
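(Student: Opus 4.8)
The plan is to read the conclusion straight off the characterization of $X_+$ in Lemma \ref{lemmacharacterizationxplus} together with the explicit formula for $d_n$ in Lemma \ref{lemmadistance}. First I would recall that $(f_n)_{n\in\N}\in X_+$ gives, in particular, $\lim_{n\to\infty}d_n(f_n)=0$. Next, fixing the chosen pairs $x_n\bowtie y_n$ (i.e.\ $\tau_n(x_n)=\tau_n(y_n)$), Lemma \ref{lemmadistance} expresses $d_n(f_n)$ as one half of the supremum of $|f_n(x)-f_n(y)|$ over \emph{all} pairs $x,y\in K_n$ with $x\bowtie y$; in particular $|f_n(x_n)-f_n(y_n)|\le 2\,d_n(f_n)$ for every $n$. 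Combining these two facts yields $|f_n(x_n)-f_n(y_n)|\le 2\,d_n(f_n)\to 0$, which is exactly the assertion.

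Alternatively, and perhaps more in keeping with the immediately preceding corollary, I would argue via Corollary \ref{corollary1assymptotic}: given $\varepsilon>0$, pick $N$ so that $d_n(f_n)<\varepsilon/2$ for all $n\ge N$; for each such $n$ choose $g_n\in Z_n$ with $\|f_n-g_n\|<\varepsilon/2$, and then Corollary \ref{corollary1assymptotic} applied to $f_n$, $g_n$, and the pair $x_n\bowtie y_n$ gives $|f_n(x_n)-f_n(y_n)|<\varepsilon$. Since $\varepsilon>0$ was arbitrary, this proves the convergence.

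There is essentially no obstacle here; the statement is a direct corollary. The only minor point worth noting explicitly is that the supremum in Lemma \ref{lemmadistance} ranges precisely over the pairs $x\bowtie y$, so the bound $|f_n(x_n)-f_n(y_n)|\le 2\,d_n(f_n)$ applies to the specific sequence of pairs in the hypothesis with no further work, and the boundedness clause in the definition of $X_+$ is not even needed for this particular corollary.
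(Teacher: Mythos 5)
Your proof is correct and matches the paper's: the paper also deduces the statement directly from Corollary \ref{corollary1assymptotic} together with the condition $\lim_n d_n(f_n)=0$ from Lemma \ref{lemmacharacterizationxplus}, which is exactly your second argument (and your first argument is just the same thing with Corollary \ref{corollary1assymptotic} unwound back to Lemma \ref{lemmadistance}).
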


\begin{proof}
Follows directly from the previous corollary and from Lemma \ref{lemmacharacterizationxplus}.
\end{proof}

Finally we will need the following notation:

\begin{itemize}

\item $Y_0=\{ (f_n)\in X_0: f|K_{1, 0}=0\}$
\item $Y_+=\{ (f_n)\in X_+: f|K_{1, 0}=0\}$.

\end{itemize}

\begin{lemma} $Y_+$ is complemented in $X_+$ and 
$X_+$ contains a complemented subspace isometric to $Y_+$. Both projections
of norm one. 
\end{lemma}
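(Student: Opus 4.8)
The plan is to verify the two assertions separately, each by exhibiting an explicit norm-one projection.

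\textbf{$Y_+$ is complemented in $X_+$.} I would use the decomposition coming from the clopen set $K_{1,0}\subseteq K_+$. Since $K_{1,0}$ is clopen in $K_+$, the restriction map $(f_n)\mapsto f_0|K_{1,0}$ and the extension-by-zero map give a decomposition $X_+\cong C(K_{1,0})\oplus Y_+$, where $Y_+=\{(f_n)\in X_+:f_0|K_{1,0}=0\}$ is exactly the kernel of the restriction. Concretely, the map $Q:X_+\to X_+$ sending $(f_n)$ to the element which agrees with $(f_n)$ off $K_{1,0}$ and is $0$ on $K_{1,0}$ is a well-defined bounded projection onto $Y_+$; one must check that $Q(f_n)$ still lies in $X_+$, i.e. that zeroing out $f_0$ on $K_{1,0}$ does not disturb boundedness (obvious) nor the condition $\lim_n d_n(f_n)=0$ (it only changes the $n=0$ coordinate, and finitely many coordinates are irrelevant to the limit). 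The complementary projection $Id-Q$ has the form ``multiply by $\chi_{K_{1,0}}$'', so $\|Q\|=\|Id-Q\|=1$ since $K_{1,0}$ and its complement are clopen.

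\textbf{$X_+$ contains a complemented copy of $Y_+$ isometrically.} Here I would use the structure built into the compactification: the map that ``shifts'' the index $n\mapsto n+1$ using the homeomorphisms $i_{n+1,n}:K_n\to K_{n+1}$. Define $R:X_+\to X_+$ by letting $R(f_n)$ have $0$ in coordinate $0$ and, in coordinate $n+1$, the function $f_n\circ i_{n,n+1}$ (the copy of $f_n$ living on $K_{n+1}$). This $R$ is an isometric embedding: it is clearly norm-preserving since $\|f_n\|=\|f_n\circ i_{n,n+1}\|$ and the sup over coordinates is unchanged except for the harmless extra zero coordinate. I must check $R$ maps into $X_+$: boundedness is clear, and since the $i_{m,n}$ preserve the subalgebras $C_n$ and hence the subspaces $Z_n$, we have $d_{n+1}(f_n\circ i_{n,n+1})=d_n(f_n)$, so $\lim_{n\to\infty}d_{n+1}(f_n\circ i_{n,n+1})=\lim_n d_n(f_n)=0$. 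The range of $R$ is $\{(g_n)\in X_+:g_0=0\}$, which contains (and is in fact isometric to, via the obvious reindexing) $Y_+$; actually it is cleaner to define $R$ directly on $Y_+$, or to note $\{g_0=0\}$ is itself of the same form as $Y_+$ up to relabelling. This subspace is complemented by the norm-one projection ``zero out coordinate $0$'', which is well-defined into $X_+$ for the same finitely-many-coordinates reason as above.

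\textbf{Expected main obstacle.} There is no deep obstacle; the only points requiring care are bookkeeping ones: first, making sure each proposed map genuinely lands in $X_+$ by checking the asymptotic condition $\lim_n d_n(f_n)=0$ from Lemma \ref{lemmacharacterizationxplus} survives (which it does because the operations either change a single coordinate or are transported by the $Z_n$-preserving homeomorphisms, using that the $i_{m,n}$ carry $C_n$ to $C_m$); and second, pinning down precisely which subspace is ``isometric to $Y_+$'' — i.e. checking that $\{(g_n)\in X_+:g_0=0\}$ and $Y_+=\{(f_n)\in X_+:f_0|K_{1,0}=0\}$ are isometrically isomorphic, which follows since both are $C(K)$-type spaces over the corresponding clopen subsets of $K_+$ and these subsets are homeomorphic (both equal to $K_+$ with one clopen metrizable piece removed, glued consistently via the $i_{m,n}$). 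The norm-one claim is then immediate from the fact that all the relevant projections are multiplications by characteristic functions of clopen sets.
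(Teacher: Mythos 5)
Your first half is correct and is essentially the paper's own argument: since $K_{1,0}$ is clopen in $K_+$, multiplication by $\chi_{K_+\setminus K_{1,0}}$ is a norm-one projection of $X_+$ onto $Y_+$, and your membership check (altering only the coordinate $n=0$ affects neither boundedness nor $\lim_n d_n(f_n)=0$) is the right one.

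The second half contains a genuine error. The range of your shift $R$ is $V=\{(g_n)\in X_+: g_0=0\}$, and your own isometry computation shows that $V$ is isometric to $X_+$ — not to $Y_+$. The assertion that $V$ is ``isometric to $Y_+$ via the obvious reindexing,'' justified by a claimed homeomorphism between $K_+\setminus(K_{1,0}\cup K_{2,0})$ and $K_+\setminus K_{1,0}$, cannot be right: since $K_+\setminus(K_{1,0}\cup K_{2,0})\cong K_+$ via the shift, such a homeomorphism would yield $Y_+\cong C(K_+\setminus K_{1,0})\cong X_+$ isometrically, flatly contradicting the main theorem of the paper that $X_+$ and $Y_+$ are not even isomorphic; the two clopen pieces being removed are not interchangeable (and neither is metrizable). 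You also have an inclusion backwards: $V\subseteq Y_+$ (vanishing on all of $K_{1,0}\cup K_{2,0}$ implies vanishing on $K_{1,0}$), not $V\supseteq Y_+$. The statement as literally printed (``$X_+$ contains a complemented subspace isometric to $Y_+$'') is of course immediate from your first half, since $Y_+$ is itself such a subspace; but the content the paper needs for the Schroeder--Bernstein conclusion, and what its proof establishes via the decomposition $X_+\equiv C(K_{1,0})\oplus C(K_{2,0})\oplus V$ with $V$ isometric to $X_+$, is the reverse embedding: $V$ sits inside $Y_+$, is complemented there by a norm-one projection (zero out the remaining coordinate-$0$ part, i.e.\ multiply by $\chi_{K_+\setminus(K_{1,0}\cup K_{2,0})}$), and is isometric to $X_+$, so that $X_+$ embeds complementably into $Y_+$. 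Your map $R$ is exactly the right construction for this; you only need to draw the correct conclusion from it rather than the false identification of $V$ with $Y_+$.
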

\begin{proof}
Clearly $X_+=C(K_+)\equiv C(K_{1,0})\oplus C(K_{2,0})\oplus \{ (f_n)\in X_+: f_1=0\}$
and $\{ (f_n)\in X_+: f_1=0\}$ is isometric to $X_+$
while $C(K_{2,0})\oplus \{ (f_n)\in X_0: f_1=0\}$ is isometric to $Y_+$.

\end{proof}

\noindent The rest of this paper is devoted to the proof that $Y_+$ 
is not isomorphic to $X_+$. 

\vskip 26pt
\section{Operators on $X_+$}
\vskip 13pt
First, we will define two operators which will serve to illustrate several
phenomena in Proposition \ref{lambdatheta} and Remarks \ref{remarkcompositions} and \ref{remarkmultiplications}.
\vskip 13pt
\begin{definition}\label{definitionlambdatheta}
Fix a pairwise disjoint sequence $(g_n)$ in $C(K)$ such
that $||g_n||=1$ and fix a dense countable subset $\{x_n: n\in\N\}$ of $K$ and for
each $n\in\N$ fix
$\theta_n\in C^*(K)$  such that
$||\theta_n||=1$ and such that $\theta_n|Z_n=0$.

\begin{itemize}
\item Define $\Lambda:C(K)\rightarrow X_+$ by $\Lambda(f)|K_n=f(x_n)\chi_{K_n}$
for $f\in C(K)$,
and 
\item
define $\Theta: X_+\rightarrow C(K)$ by $\Theta((f_n))=\sum_{n\in \N} \theta_n(f_n)g_n$.
\end{itemize}
\end{definition}

\begin{proposition}\label{lambdatheta} $\Theta$ and $\Lambda$ are well-defined linear bounded
operators. 
The image of $\Theta$ is isomorphic to $c_0$, in 
particular $X_+$ is not a Grothendieck space and so, it has a complemented copy of $c_0$. 

\end{proposition}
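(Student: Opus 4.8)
The plan is to verify the three assertions in turn: (i) $\Lambda$ is well-defined, linear and bounded; (ii) $\Theta$ is well-defined, linear and bounded; (iii) the image of $\Theta$ is isomorphic to $c_0$, and then derive the two corollaries about $X_+$.

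For $\Lambda$, the point is to check that for $f\in C(K)$ the sequence $\big(f(x_n)\chi_{K_n}\big)_{n\in\N}$ actually lies in $X_+$ as characterized in Lemma~\ref{lemmacharacterizationxplus}. Boundedness of the sequence of norms is immediate since $\|f(x_n)\chi_{K_n}\|=|f(x_n)|\le\|f\|$. For the condition $\lim_n d_n(f(x_n)\chi_{K_n})=0$, observe that a constant function on $K_n$ lies in $Z_n$ (constants are invariant under the identification $\tau_n$), so in fact $d_n(f(x_n)\chi_{K_n})=0$ for every $n$. Hence $\Lambda(f)\in X_0\subseteq X_+$, and moreover $\|\Lambda(f)\|\le\|f\|$, so $\|\Lambda\|\le 1$; linearity is obvious. (The density of $\{x_n\}$ plays no role in well-definedness — it is recorded here for later use in the remarks.)

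For $\Theta$, the main work is to show the series $\sum_n\theta_n(f_n)g_n$ converges in $C(K)$ and defines a bounded operator. Since $(f_n)\in X_+$ gives $\sup_n\|f_n\|<\infty$ and $\|\theta_n\|=1$, the scalar sequence $\big(\theta_n(f_n)\big)_n$ is bounded by $\|(f_n)\|$. Because $(g_n)$ is a bounded pairwise disjoint sequence with $\|g_n\|=1$, Fact~\ref{factczero} shows $(g_n)$ generates a copy of $c_0$; let $S\colon c_0\to C(K)$ be the associated isomorphism with $S(1_n)=g_n$. Then $\Theta((f_n))=S\big((\theta_n(f_n))_n\big)$ provided $(\theta_n(f_n))_n\in c_0$; however a priori this scalar sequence need only be bounded, not null, so one must argue that $\Theta((f_n))$ is nevertheless a well-defined element of $C(K)$. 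The cleanest route is to note that for pairwise disjoint $(g_n)$ the partial sums $\sum_{n\le N}\theta_n(f_n)g_n$ are uniformly Cauchy: at any point $x\in K$ at most one $g_n(x)$ is nonzero, so $\|\sum_{k_1\le n\le k_2}\theta_n(f_n)g_n\|=\sup_{k_1\le n\le k_2}|\theta_n(f_n)|\,\|g_n|_{\mathrm{supp}(g_n)}\|$ — wait, this is where the genuine obstacle sits: pairwise disjoint does not force the norm of a sum to equal the sup of the norms unless the supports are suitably separated. I expect the intended argument uses instead that since $\theta_n|Z_n=0$ and $d_n(f_n)\to 0$, one has $|\theta_n(f_n)|=|\theta_n(f_n-z_n)|\le\|f_n-z_n\|$ for any $z_n\in Z_n$, hence $|\theta_n(f_n)|\le d_n(f_n)+\varepsilon\to 0$. \emph{This is the key step:} the hypothesis $\theta_n|Z_n=0$ combined with the defining property $\lim_n d_n(f_n)=0$ of elements of $X_+$ forces $(\theta_n(f_n))_n\in c_0$. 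Once this is established, $\Theta((f_n))=S\big((\theta_n(f_n))_n\big)$ is a well-defined element of $C(K)$, linearity is clear, and $\|\Theta((f_n))\|\le\|S\|\,\sup_n|\theta_n(f_n)|\le\|S\|\,\|(f_n)\|$, giving boundedness.

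Finally, for the image: by the computation above $\Theta$ factors as $S\circ R$ where $R\colon X_+\to c_0$ is $R((f_n))=(\theta_n(f_n))_n$, and $R$ is surjective — given $(\alpha_n)\in c_0$ pick $f_n\in C(K_n)$ with $\|f_n\|=|\alpha_n|$, $\theta_n(f_n)=\alpha_n$ and $f_n$ a scalar multiple of a fixed norm-one function, so that $d_n(f_n)=|\alpha_n|\,d_n(\hat f_n)\to 0$ and hence $(f_n)\in X_+$; then $\Theta((f_n))=S((\alpha_n))$. Since $S$ is an isomorphism onto its range, the image of $\Theta$ equals the range of $S$, which is isomorphic to $c_0$. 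A space containing a copy of $c_0$ in the range of a bounded operator is not Grothendieck (a Grothendieck space admits no operator onto an infinite-dimensional $c_0$, equivalently weak-star null sequences in its dual are weakly null), so $X_+$ is not Grothendieck; and since $c_0$ is separably injective (Sobczyk's theorem), the copy of $c_0$ sitting as the range of $S$ is complemented in $C(K)$, but to get it complemented in $X_+$ one composes: the sequence $\big(I_n(\hat f_n)\big)_n$ used to build the preimage is itself pairwise disjoint in $X_+$ and equivalent to the $c_0$ basis by Fact~\ref{factczero}, so it spans a copy of $c_0$ inside $X_+$ which, being a separable subspace, is complemented in $X_+$ by Sobczyk's theorem once one checks $X_+$ is a subspace of some $\ell_\infty(\Gamma)$-type space — alternatively, $P:=\Lambda'\circ\Theta$ for a suitable right inverse gives an explicit projection. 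I would present the $c_0$-in-$X_+$ witness explicitly and cite Sobczyk.
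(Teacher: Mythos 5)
Your treatment of $\Theta$ hits exactly the paper's key step: since $\theta_n|Z_n=0$ and $d_n(f_n)\to 0$ for $(f_n)\in X_+$, one gets $|\theta_n(f_n)|\le\|\theta_n\|\,\|f_n-z_n\|\to 0$, and then the series converges. Two small remarks on the surrounding text. Your worry about the uniform Cauchy estimate is unfounded: for pairwise disjoint $g_n$ at most one term is nonzero at each point of $K$, so $\|\sum_{k_1\le n\le k_2}c_ng_n\|=\max_{k_1\le n\le k_2}|c_n|\,\|g_n\|$ exactly, with no separation of supports needed; either this direct estimate or the factorization through $S:c_0\to C(K)$ works once $(\theta_n(f_n))\in c_0$ is known. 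Also, $\Lambda(f)$ lies in $X_+$ (each coordinate is constant, hence in $Z_n$, so $d_n=0$) but generally not in $X_0$: for $f\equiv 1$ all coordinate norms equal $1$. This is harmless since $X_+$ is all that is claimed. Finally, since $\|\theta_n\|=1$ need not be attained, your surjectivity witness should read $\|f_n\|\le 2|\alpha_n|$ rather than $=|\alpha_n|$.

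The one genuine gap is the complementation step. $X_+$ is nonseparable (the paper notes it contains a complemented copy of $l_\infty$), so Sobczyk's theorem does not apply, and "contains a copy of $c_0$" alone would not suffice --- $c_0$ is not complemented in $l_\infty$. What saves you is the operator you already built: with $\Theta=S\circ R$, $R((f_n))=(\theta_n(f_n))_n$, choose pairwise disjoint $u_n=I_n(\hat f_n)$ with $\|\hat f_n\|=1$ and $\theta_n(\hat f_n)\ge 1/2$; these span a copy of $c_0$ in $X_+$ by \ref{factczero}, and if $J:c_0\to X_+$ sends $e_n\mapsto u_n$ and $R'$ is $R$ corrected by the diagonal isomorphism dividing by $\theta_n(\hat f_n)$, then $R'\circ J=Id_{c_0}$, so $J\circ R'$ is a bounded projection of $X_+$ onto that copy. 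Alternatively this is exactly what the paper's citation of Schachermeyer covers: a $C(K)$ space admitting a non--weakly-compact operator into $c_0$ is not Grothendieck and has a complemented copy of $c_0$. With that repair the proposal is correct and follows essentially the same route as the paper, which is simply terser about the image and the complementation.
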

\begin{proof}
The only nontrivial part of the first statement is whether $\Theta((f_n))$ is always
in $C(K)$ for any $(f_n)\in X^+$. It is enough to note that $\theta_n(f_n)$s tend to
zero. But for each $n\in\N$ there is a $z_n\in Z_n$ such that $||f_n-z_n||$s go to zero.
Then using the fact that $\theta_n|Z_n=0$ we have

$$|\theta_n(f_n)|=|\theta_n(f_n-z_n)+\theta_n(z_n)|\leq ||\theta_n||||f_n-z_n||$$
which goes to zero. So, the last statement follows as well (see  \cite{schachermeyer} 5.1, 5.3).

\end{proof}

Note that $X=\{(f_n)\in X_+: \forall n\in\N\ f_n|K_n\ \hbox{\rm is constant}\}$ is a 
complemented in $X_+$ copy of $l_\infty$. Thus using it and the operators $\Theta$ and $\Lambda$
we may construct many nontrivial operators from $X^+$ into $X^+$ which have nothing to do with
any multiplications. 
It was already noted in \cite{kmm1} that nevertheless dealing with this
kind of spaces we have strong tools (see for example
\ref{corollarycompositions}) to analyze all the operators.

\begin{lemma}\label{lemmauniformity}
Suppose that $T: C(K)\rightarrow X_+$ is a linear bounded operator.
Let $(f_m)_{m\in \N}$ be a bounded sequence of  pairwise disjoint functions in $C(K)$.
Then
$$\forall\varepsilon \exists k \forall n> k\forall m  \ \ d_n(T({f_m})|K_n)\leq\varepsilon.$$
\end{lemma}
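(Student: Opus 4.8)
The statement asserts a uniform-in-$m$ version of the fact that the tail coordinates of $T(f_m)$ asymptotically lie in the subspaces $Z_n$. The natural strategy is to argue by contradiction, extracting a bad sequence and using the $c_0$-machinery from Section 2 together with weak compactness. Suppose the conclusion fails: then there is $\varepsilon>0$ so that for every $k$ we can find $n>k$ and $m$ with $d_n(T(f_m)|K_n)>\varepsilon$. By iterating, I would produce a strictly increasing sequence $n_1<n_2<\cdots$ and indices $m_1,m_2,\dots$ with $d_{n_i}(T(f_{m_i})|K_{n_i})>\varepsilon$ for all $i$. By Lemma \ref{lemmadistance}, for each $i$ there are points $x_i,y_i\in K_{n_i}$ with $x_i\bowtie y_i$ and $|T(f_{m_i})(x_i)-T(f_{m_i})(y_i)|>2\varepsilon$.

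\textbf{Main step: forcing pairwise disjointness on the $f$ side.} The sequence $(f_{m_i})$ need not itself be well-behaved, but there are two cases. If infinitely many of the $m_i$ repeat a single value $m$, then $T(f_m)\in X_+$ is a fixed function, and then $d_{n_i}(T(f_m)|K_{n_i})>\varepsilon$ for infinitely many $i$ contradicts Lemma \ref{lemmacharacterizationxplus} (which says $d_n(g_n)\to0$ for any $(g_n)\in X_+$), applied to $g=T(f_m)$. So we may assume, after passing to a subsequence, that the $m_i$ are distinct; since the original $(f_m)$ are pairwise disjoint, $(f_{m_i})$ is then a bounded pairwise disjoint sequence in $C(K)$. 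Now I want to convert the ``bumps'' $|T(f_{m_i})(x_i)-T(f_{m_i})(y_i)|>2\varepsilon$ into a genuine lower bound that contradicts weak compactness. Consider the functionals $\zeta_i\in C^*(K_+)$ given by $\zeta_i=\delta_{x_i}-\delta_{y_i}$ (evaluated after restriction to $K_{n_i}$, i.e.\ composed with $P_{n_i}$); these have norm $\le 2$. Then $|\zeta_i(T(f_{m_i}))|>2\varepsilon$. The plan is to apply Corollary \ref{corollarysupremus} to the pairwise disjoint sequence $(f_{m_i})$ and the bounded functionals $(T^*\zeta_i)$ in $C^*(K)$: it yields an infinite $M$ such that for $M'\subseteq M$ with $f_{M'}=\sup_{n\in M'}f_{m_n}$ existing in $C(K)$, we have $|T^*\zeta_k(f_{M'})-T^*\zeta_k(f_{m_k})|<\varepsilon$ for $k\in M'$ and $|T^*\zeta_k(f_{M'})|<\varepsilon$ for $k\in M\setminus M'$.

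\textbf{Closing the argument.} The hard part is exploiting property (5) of Lemma \ref{lemmaexistence}: passing to a further infinite subset, the $f_{m_i}$ may be taken of the form $\chi_{a_i}\cdot(\text{bounded})$ with the $a_i$ clopen pairwise disjoint, and after refining so that $\sup_{i\in M} a_i$ exists in $A$, the supremum $f_M=\sup_{i\in M}f_{m_i}$ exists in $C(K)$. (One reduces to the simple-function case by Stone--Weierstrass, approximating each $f_{m_i}$ within $\varepsilon/(4\|T\|)$ by a function constant on clopen pieces, which changes $\zeta_i(T(f_{m_i}))$ by less than $\varepsilon/2$.) Now fix any such $M'\subseteq M$ and let $g=T(f_{M'})\in X_+$. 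For $k\in M'$ we get $|\zeta_k(g)-\zeta_k(T(f_{m_k}))|<\varepsilon$, hence $|\zeta_k(g)|>\varepsilon$; but $\zeta_k(g)=g(x_k)-g(y_k)$ with $x_k\bowtie y_k$ and $x_k,y_k\in K_{n_k}$, so $d_{n_k}(g|K_{n_k})>\varepsilon/2$ for infinitely many $k$, contradicting $g\in X_+$ via Lemma \ref{lemmacharacterizationxplus}. (Equivalently, one may bypass $M'$ entirely and just take $M'=M$, the case distinction being unnecessary once the supremum exists.) This contradiction establishes the lemma. The step I expect to require the most care is the reduction to functions whose suprema exist — i.e., coordinating the Stone-Weierstrass approximation with property (5) so that Corollary \ref{corollarysupremus} actually applies — and making sure the error terms from approximation stay below $\varepsilon$.
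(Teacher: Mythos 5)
Your overall strategy -- contradiction, extraction of $n_i,m_i$ with $d_{n_i}(T(f_{m_i})|K_{n_i})>\varepsilon$, the case split on whether the $m_i$ repeat, the points $x_i\bowtie y_i$ from Lemma \ref{lemmadistance}, the functionals $T^*(\delta_{x_i}-\delta_{y_i})$, and the final appeal to Corollary \ref{corollarysupremus} plus Lemma \ref{lemmaexistence}(5) to produce a single function whose image under $T$ violates Corollary \ref{corollary2assymptotic} -- is exactly the paper's. But the step you yourself flag as delicate contains a genuine gap: you apply Corollary \ref{corollarysupremus} to the sequence $(f_{m_i})$ itself and then need $f_M=\sup_{i\in M}f_{m_i}$ to exist in $C(K)$, justifying this by approximating each $f_{m_i}$ by a simple function supported on a clopen $a_i$ and taking $\sup_{i\in M}a_i$ in $A$. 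That does not suffice. Property (5) of Lemma \ref{lemmaexistence} only provides suprema of clopen sets, i.e.\ of characteristic functions; it does not give suprema of disjointly supported simple functions with varying values. Indeed, if $a=\sup_{i\in M}a_i$ in $A$ then $[a]=\overline{\bigcup_i[a_i]}$, so any continuous upper bound agreeing with $c_i\chi_{[a_i]}$ on $[a_i]$ must extend continuously to the nonempty remainder $[a]\setminus\bigcup_i[a_i]$; with, say, values $c_i$ alternating between $1$ and $2$ no such extension exists in general. So the object $f_M$ on which your final contradiction rests need not be in $C(K)$ at all, and the Stone--Weierstrass approximation does not repair this because it produces multi-valued simple functions, not characteristic functions.

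The paper's proof circumvents this with one extra move that your write-up is missing: rather than taking suprema of the $f_{m_k}$'s, it first uses $|\mu_k(f_{m_k})|>2\varepsilon$ (where $\mu_k=T^*(\delta_{x_k}-\delta_{y_k})$) and total disconnectedness of $K$ to choose pairwise disjoint \emph{clopen} sets $[a_{m_k}]$ inside the supports of the $f_{m_k}$ with $|\mu_k([a_{m_k}])|>\varepsilon$, and then applies Corollary \ref{corollarysupremus} to the sequence of characteristic functions $(\chi_{[a_{m_k}]})$. For characteristic functions of clopen sets the supremum in $C(K)$ is precisely $\chi_{[a]}$ with $a=\sup_{k\in M}a_{m_k}$, which Lemma \ref{lemmaexistence}(5) guarantees to exist after passing to an infinite subset. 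The single function $\chi_{[a]}$ then yields $|T(\chi_{[a]})(x_k)-T(\chi_{[a]})(y_k)|>\varepsilon/2$ for all $k\in M$, contradicting Corollary \ref{corollary2assymptotic}. You should insert this reduction from the $f_{m_k}$'s to clopen subsets of their supports; once that is done the rest of your argument goes through.
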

\begin{proof} We may w.l.o.g. assume that the norms of the sequence of functions are bounded by $1$.
Suppose that the lemma is false. Let $\varepsilon>0$ be such that for every $k\in \N$ there
is $n>k$ and $m_k\in \N$ such that 
$$d_n(T({f_{m_k}})|K_n)>\varepsilon.$$
Hence we can choose increasing $n_k$'s such that 
$$d_{n_k}(T({f_{m_k}})|K_{n_k})> \varepsilon.$$
Moreover as $T({f_{m_k}})\in X_+$ by \ref{lemmacharacterizationxplus}
we must have that $m_k$s
assume infinitely many values and so we may assume that 
$$d_{n_k}(T({f_{m_k}})|K_{n_k})>\varepsilon.$$
and $m_k$s are increasing. 

By \ref{lemmadistance} there are points $x_k,y_k\in K_{n_k}$
such that  $x_k\bowtie y_k$ and for each $k\in \N$ 
$$|T({f_{m_k}})(x_k)-T({f_{m_k)}}(y_k)|>2\varepsilon.$$
Let $$\mu_k=T^*(\delta_{x_k}-\delta_{y_k}).$$
So we have that $|\int f_{m_k}d\mu_k|>2\varepsilon$. Since $K$ is totally disconnected we can
choose pairwise disjoint clopen $[a_{m_k}]$ included in the supports of $f_{m_k}$ such that
$$|\mu_k([a_{m_k}])|>\varepsilon.$$

Now we  use \ref{corollarysupremus} for the pairwise disjoint bounded sequence
$(\chi_{[a_{m_k}]})_{k\in \N}$, $\varepsilon/2$ and the functionals equal to the measures $\mu_k$ 
and then \ref{lemmaexistence} (5) obtaining
an infinite $M\subseteq \N$ such that there is the supremum $a=\sup_{k\in M}a_{m_k}$ in $Clop(K)$
and $|\mu_k(a)-\mu_k(a_{m_k})|<\varepsilon/2$ for each $k\in M$. However this implies that
$| T(\chi_{[a]}) (x_k)-T(\chi_{[a]}) (y_k)|>\varepsilon/2$ for each $k\in M$ which by 
\ref{corollary2assymptotic} implies that $T(\chi_{[a]})$ is not in $X_+$, a contradiction.

\end{proof}

\begin{definition}\label{definitiondnsimple}
Let $f\in X_+$. We say that $f$ is $(C_n)$-simple if for every
$n\in \N$ the function $f|K_n$ assumes only finitely many values
on clopen sets  from $C_n$.
\end{definition}

\begin{lemma}\label{lemmadnsimple}
Suppose that $(f_m)$ is a sequence of elements of   $X_+$ are such that
\begin{enumerate}
\item $f_m$s are $(C_n)$-simple for each $m\in\N$
\item $f_m|K_{-m}=0$
\item $(f_m)$ generates a copy of $c_0$.
\end{enumerate}
Suppose that $T:X_+\rightarrow C(K)$ is a bounded linear operator.
Then $||T(f_m)||$s converge to $0$.
\end{lemma}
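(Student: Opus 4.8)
The plan is to use Theorem~\ref{theoremdiesteluhl} to reduce the statement to a disjointness assertion: since $(f_m)$ generates a copy of $c_0$ it is in particular bounded, but it need not be pairwise disjoint, so the first task is to replace it by something which \emph{is} pairwise disjoint while controlling the error. The idea is that the hypothesis $f_m|K_{-m}=0$ means the ``masses'' of the $f_m$ move out to infinity along the $K_n$'s, so after passing to a subsequence (which suffices, by a standard argument, since if a subsequence of $\|T(f_m)\|$ tends to $0$ for every subsequence then the whole sequence does) one should be able to find, for each $m$, a clopen set $a_m\subseteq\bigcup_{n}\N_n$ on which $f_m$ is ``essentially concentrated'' and such that the $a_m$ are pairwise disjoint. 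Here is where the $(C_n)$-simplicity enters: because $f_m|K_n$ takes only finitely many values on clopen sets from $C_n$, and because of the characterization in Lemma~\ref{lemmacharacterizationxplus} (so $d_n(f_m|K_n)\to 0$ as $n\to\infty$), one can chop $f_m$ into a finitely-supported-in-$n$ ``big'' part plus a ``small tail'', and arrange the big parts to live on disjoint blocks of coordinates.

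More concretely, first I would fix $\varepsilon>0$ and for each $m$ choose $N_m$ with $\sum_{n>N_m}$-tail of $f_m$ having norm $<\varepsilon$ (this uses boundedness together with $d_n(f_m|K_n)\to 0$ only loosely — actually one just needs that $f_m\in X_+$ and $X_0$ is the closure of the finitely supported functions, so $f_m$ is within $\varepsilon$ of a function supported on $K^{-N_m}$ in the $X_\infty$ sense — careful: this is only true for $X_0$, not $X_+$, so instead one should split $f_m = f_m\chi_{K^{-N_m}} + f_m\chi_{K^{+N_m}}$ where the second piece has small $d_n$ but not small norm). Let me reorganize: the right move is to use hypothesis (2) to get $f_m\chi_{K^{-N}}=0$ for $N\le m$, then observe that $f_m\chi_{K^{-N_m}}$ (for suitable $N_m\ge m$) captures ``most of the action detectable by $T$'', because the complementary piece, living on high-index $K_n$'s, is $(C_n)$-simple with small $d_n$, hence close to $Z$-functions, and the relevant operator-theoretic content — applying Theorem~\ref{theorempelczynski} / weak compactness — lets us discard it. Passing to a subsequence with $m_{k+1}>N_{m_k}$ makes the surviving pieces $f_{m_k}\chi_{K^{-N_{m_k}}}\chi_{K^{+N_{m_{k-1}}}}$ pairwise disjoint, and they still generate a copy of $c_0$ up to the small perturbation (by Fact~\ref{factczerodisjoint} and Theorem~\ref{theorembessagapelczynski}).

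Once a pairwise disjoint bounded sequence has been extracted whose $T$-images are, up to $\varepsilon$, the $T$-images of the original $f_m$'s along a subsequence, Theorem~\ref{theoremdiesteluhl} does not immediately apply because $T$ need not be weakly compact; but we do not need $T$ weakly compact. Instead, for a bounded \emph{pairwise disjoint} sequence $(h_k)$ in $X_+$, the images $T(h_k)$ live in $C(K)$, and I would argue directly: if $\|T(h_k)\|\not\to 0$, pass to a subsequence with $\|T(h_k)\|>\delta$; these $h_k$ generate a copy of $c_0$ (Fact~\ref{factczero}), so $T$ restricted to their closed span is an operator from a copy of $c_0$; applying Theorem~\ref{theoremdrewnowski} (with $X=c_0$) we get either $T(h_k)\to 0$ — contradiction — or an infinite $M$ with $T$ an isomorphism on $c_0(M)$, and then Theorem~\ref{theoremrosenthal2} (noting, via property (3) of Lemma~\ref{lemmaexistence} transferred to $X_+$, or rather directly, that $X_+$ contains the copy of $l_\infty$ noted after Proposition~\ref{lambdatheta}, and that the span of the $h_k$ can be enlarged) would force a copy of $l_\infty$ on which $T$ is an isomorphism, contradicting that $C(K)$ contains no copy of $l_\infty$ (Lemma~\ref{lemmaexistence}(3)). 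Thus $\|T(h_k)\|\to 0$, hence $\limsup_m\|T(f_m)\|\le C\varepsilon$ along the subsequence for a constant $C$ depending only on $\|T\|$ and the $c_0$-constant, and letting $\varepsilon\to 0$ and invoking the subsequence principle gives $\|T(f_m)\|\to 0$.

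The main obstacle I expect is making the perturbation step rigorous: showing that the ``tail'' pieces of the $f_m$ — which are $(C_n)$-simple with $d_n\to 0$ but \emph{not} norm-small — can be discarded without affecting whether $\|T(f_m)\|\to 0$. This is exactly where hypothesis (1), $(C_n)$-simplicity, must be used in an essential way (the finitely-many-values condition should let one write the tail as an \emph{exactly} $Z_n$-valued function plus a genuinely small correction, coordinate by coordinate, and then the $Z$-valued part forms a bounded sequence in the copy of $Z=C(L)$-type subspace on which one can run a separate, cleaner argument — possibly again via Drewnowski/Rosenthal or via the structure of $\tau_n$). Getting the bookkeeping of the three-way splitting $f_m = (\text{head on }K^{-N_{m_{k-1}}}) + (\text{middle, disjoint block}) + (\text{tail, }Z\text{-like}+\text{small})$ to interact correctly with the choice of subsequence is the delicate part; everything else is an assembly of the cited theorems.
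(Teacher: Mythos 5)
Your argument has a genuine gap at its decisive step: the application of Theorem \ref{theoremrosenthal2}. That theorem requires the \emph{domain} of the operator to be an injective space, and $X_+$ is not (known to be) injective; the copy of $l_\infty$ inside $X_+$ that you invoke (the functions constant on each $K_n$, mentioned after Proposition \ref{lambdatheta}) bears no relation to the closed span of your disjoint sequence $(h_k)$, and ``enlarging the span of the $h_k$'' to an injective subspace is precisely what has to be proved --- it is the heart of the paper's proof and the place where hypotheses (1) and (2) are actually used. The paper's argument is: for each $n$ the algebra $E_n$ of subsets of $K_n$ generated by the preimages under all the $f_m$ is \emph{finite} (only the finitely many $f_m$ with $m<n$ can be nonzero on $K_n$ by (2), and each of these is $(C_n)$-simple by (1)); hence $E=\{a\in D_\infty:\ \forall n\ a\cap\N_n\in E_n\}$ is a complete Boolean algebra contained in $D_+$, so the corresponding space $V$ of continuous functions is an injective subspace of $X_+$ containing every $f_m$. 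Then Theorem \ref{theoremdrewnowski} followed by Theorem \ref{theoremrosenthal2} applied to $T|V$ produces a copy of $l_\infty$ on which $T$ is an isomorphism, contradicting Lemma \ref{lemmaexistence}(3). Without such an injective envelope the Drewnowski step alone only gives an isomorphism on a copy of $c_0$, which is no contradiction: both $X_+$ and $C(K)$ contain (complemented) copies of $c_0$, cf.\ Proposition \ref{lambdatheta}.

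A secondary problem is the disjointification preamble, which you yourself flag as unresolved and which the paper does not need at all: the tails $f_m\chi_{K^{+N}}$ are not norm-small (only their distances $d_n$ to $Z_n$ are small), so the three-way splitting does not produce a norm-small error, and the claim that the $Z$-like part can be ``discarded'' by weak compactness is unsupported --- $T$ is not assumed weakly compact and no weakly compact factor has been exhibited. In the paper's proof the hypothesis that $(f_m)$ generates a copy of $c_0$ is used directly, with no passage to a pairwise disjoint replacement; all the work goes into building the injective space $V$.
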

\begin{proof}
We will find an injective subspace of $X_+$  which
contains all the $f_m$'s and then assuming that the lemma is false we will
apply \ref{theoremdrewnowski} and  \ref{theoremrosenthal2} to obtain a contradiction with  \ref{lemmaexistence} (3).
Let $E_n$ be the  Boolean algebra of subsets of
$K_n$ generated by the preimages
under the functions $f_m$ where $m\in\N$. Note  that it is finite as almost all
the functions $f_m$ are zero on $K_n$ and the remaining functions are $(C_n)$-simple.
Consider the Boolean algebra
$$E=\{a\in D_\infty: \forall n\in \N \ \ a\cap\N_n\in E_n\},$$
and note that it is a complete Boolean algebra and hence the
Banach space $V$ of all continuous functions which are in the closure of finite linear combinations
of characteristic functions of elements of $E$ is included in $X_+$, is injective
and that $f_m\in V$ for each $m\in \N$. 

Now, if $||T(f_m)||$s do not converge to $0$, then by  \ref{theoremdrewnowski} 
we may w.l.o.g. assume that $T$ is an isomorphism on the copy of $c_0$
generated by $(f_m)$.
Finally apply \ref{theoremrosenthal2} to conclude that $C(K_n)$ contains a copy
of $l_\infty$ which gives a contradiction with  \ref{lemmaexistence} (3).

\end{proof}

\begin{lemma}\label{lemmacompositions}
Suppose that $S:C(K)\rightarrow X_+$ and $T:X_+\rightarrow C(K)$ and 
$(f_m)$
is a pairwise disjoint sequence in $C(K)$ which generates a copy of $c_0$
such that for every $n\in \N$ we have
$$||T\circ I_{-n}\circ P_{-n}\circ S(f_m)||\rightarrow 0,$$
whenever $m\rightarrow \infty$. Then $||T(S(f_m))||\rightarrow 0$.
\end{lemma}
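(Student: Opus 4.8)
The plan is to reduce the statement to Lemma \ref{lemmadnsimple} by passing to a well-chosen subsequence and approximating the functions $S(f_m)$ by $(C_n)$-simple functions supported far out. First I would fix $\varepsilon>0$ and, by scaling, assume $\|f_m\|\le 1$ and that $S$ has norm $\le 1$. The key point is to control the tail and the ``simple'' approximation simultaneously. Using Lemma \ref{lemmauniformity} applied to $S:C(K)\to X_+$ and the bounded pairwise disjoint sequence $(f_m)$, I get a $k$ such that $d_n(S(f_m)|K_n)\le\varepsilon'$ for all $n>k$ and all $m$; this is exactly the fact that $S(f_m)$ lands in $X_+$ uniformly, and it is what lets the later approximation stay inside $X_+$. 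Since each $S(f_m)\in X_+$ also satisfies $d_n(S(f_m)|K_n)\to 0$ as $n\to\infty$ individually, I can moreover choose, for each $m$, an index $n_m$ beyond which the approximation is good, and on the finitely many coordinates $n\le n_m$ replace $S(f_m)|K_n$ by a function with finitely many values on $C_n$ (indeed on $A_n$) that is within $\varepsilon'$ — this uses Stone--Weierstrass on each $K_n$ — while on coordinates $n>n_m$ I take a $(C_n)$-simple $\varepsilon'$-approximant, which exists precisely because $d_n$ is small there. Call the resulting element $g_m\in X_+$; by construction $g_m$ is $(C_n)$-simple and $\|S(f_m)-g_m\|\le \varepsilon'$ uniformly.

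Next I would arrange condition (2) of Lemma \ref{lemmadnsimple}, namely $g_m|K^{-m}=0$, by a diagonal/subsequence argument. The hypothesis ``$\|T\circ I_{-n}\circ P_{-n}\circ S(f_m)\|\to 0$ as $m\to\infty$'' says that the contribution of each fixed initial block $K^{-n}$ to $T(S(f_m))$ becomes negligible; so passing to a subsequence of $(f_m)$ (and relabeling) I can make the $K^{-m}$-part of $S(f_m)$ — hence of $g_m$ — contribute arbitrarily little to the $T$-image, after which I simply replace $g_m$ by $g_m-I_{-m}P_{-m}(g_m)$, which now vanishes on $K^{-m}$, is still $(C_n)$-simple (restriction to a clopen block preserves this), and still approximates $S(f_m)$ in the sense that $\|T(S(f_m))\|\le \|T(g_m - I_{-m}P_{-m}g_m)\| + o(1)$. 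I also need $(g_m)$ (after this surgery) to generate a copy of $c_0$: since $(f_m)$ does and $S$ is bounded with $\|S(f_m)-g_m\|$ small, Fact \ref{factczero} or the Bessaga--Pe\l czy\'nski perturbation Theorem \ref{theorembessagapelczynski} gives this on a further subsequence, shrinking $\varepsilon'$ as needed relative to the basis constant.

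Finally I would apply Lemma \ref{lemmadnsimple} to the bounded operator $T:X_+\to C(K)$ and the sequence $(g_m - I_{-m}P_{-m}g_m)$, which satisfies all three hypotheses (1)--(3), to conclude $\|T(g_m-I_{-m}P_{-m}g_m)\|\to 0$; combined with the perturbation estimates $\|S(f_m)-g_m\|\le\varepsilon'$ and $\|T\circ I_{-m}\circ P_{-m}\circ S(f_m)\|\to 0$ this yields $\limsup_m \|T(S(f_m))\|\le C\varepsilon'$ for a constant $C$ depending only on $\|T\|$; letting $\varepsilon'\to 0$ and recalling that the conclusion of the lemma is about a full sequence (so it suffices to prove every subsequence has a further subsequence on which the norms vanish) gives $\|T(S(f_m))\|\to 0$.

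\textbf{Main obstacle.} I expect the delicate point to be the construction of the $(C_n)$-simple approximants $g_m$ in a way that is \emph{uniform} in $m$ while respecting the eventual-$C_n$ structure of $X_+$: one must use Lemma \ref{lemmauniformity} to get a single cutoff $k$ that works for all $m$, then handle the coordinates $n\le k$ (finitely many, approximate within $A_n$) separately from $n>k$ (approximate within $C_n$ using the small $d_n$), and check that the patched function genuinely lies in $X_+$, i.e. still has $d_n(g_m|K_n)\to 0$. The subsequent subsequence extractions to secure hypotheses (2) and (3) of Lemma \ref{lemmadnsimple} are routine diagonalizations, but bookkeeping the three competing small parameters (the $c_0$-basis constant, the approximation error $\varepsilon'$, and the tails of $T\circ I_{-n}\circ P_{-n}\circ S$) is where care is needed.
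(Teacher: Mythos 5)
Your overall strategy --- approximate the tail parts of $S(f_m)$ by $(C_n)$-simple functions vanishing on an initial block and feed them to Lemma \ref{lemmadnsimple} --- is the same as the paper's (the paper runs it as a proof by contradiction, you run it directly), and your use of Lemma \ref{lemmauniformity}, Stone--Weierstrass and Bessaga--Pe\l czy\'nski occurs in the right places. But there is a genuine gap at the most important hypothesis of Lemma \ref{lemmadnsimple}, namely condition (3): you assert that $(g_m)$ generates a copy of $c_0$ ``since $(f_m)$ does and $S$ is bounded.'' This is false: a bounded operator need not carry a $c_0$-basis to a sequence equivalent to the $c_0$-basis (it can send it to zero, or to a non-basic sequence), and Bessaga--Pe\l czy\'nski cannot create the basic sequence you want to perturb --- it presupposes one, together with a bound on the biorthogonal functionals. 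The paper gets around this exactly here with Drewnowski's dichotomy (Theorem \ref{theoremdrewnowski}): either $\|S(f_m)\|\to 0$ (in which case the conclusion is immediate since $T$ is bounded), or on an infinite subset $(S(f_m))$ generates a copy of $c_0$; and then Fact \ref{factczerodisjoint} is needed a second time to guarantee that the truncations $P_{+n_k}\circ S(f_{m_{n_k}})$ (multiplications by characteristic functions of the clopen sets $K^{+n_k}$) still generate a copy of $c_0$, which again only works after checking that their norms are bounded below and passing to a further infinite subset. Both tools are absent from your argument, and without them Lemma \ref{lemmadnsimple} simply does not apply to your $(g_m)$.

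Two smaller problems. First, your description of $g_m$ is internally inconsistent: on a coordinate $n$ where $d_n(S(f_m)|K_n)$ is large there is \emph{no} $(C_n)$-simple function within $\varepsilon'$ of $S(f_m)|K_n$ (a $(C_n)$-simple function lies in $Z_n$, and $d_n$ is the distance to $Z_n$), while approximating ``on $A_n$'' destroys $(C_n)$-simplicity; so the claims that $g_m$ is $(C_n)$-simple and that $\|S(f_m)-g_m\|\le\varepsilon'$ cannot both hold. You must first cut away the initial block --- using the uniform $k$ from Lemma \ref{lemmauniformity} and arranging the cut to happen beyond $k$ --- and only then approximate, as the paper does with $I_{+n_k}\circ P_{+n_k}$. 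Second, Theorem \ref{theorembessagapelczynski} requires $\sum_m\|x_m-y_m\|\,\|x_m^*\|<1$, so a single uniform $\varepsilon'$ is not enough; you need approximation errors decaying like $2^{-k-2}\rho^{-1}$ as in the paper. These two points are repairable bookkeeping; the missing Drewnowski/Fact \ref{factczerodisjoint} step is the substantive omission.
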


\begin{proof}
Assume that the lemma is false i.e., that $||T(S(f_m))||$s do not converge to $0$.
Then $||S(f_m)||$s do not converge to $0$ and so by \ref{theoremdrewnowski} we may w.l.o.g assume that
$(S(f_m))_{m\in \N}$  generates a copy of $c_0$.

By the assumption that $||T\circ I_{-n}\circ P_{-n}\circ S(f_m)||\rightarrow 0$ and the hypothesis that 
$||T(S(f_m))||$s do not converge to $0$ for every $n\in\N$ we may choose $m_n\in\N$ such that
$||T\circ I_{+n}\circ P_{+n}\circ S(f_{m_n})||$s are separated from $0$ and so also
 $P_{+n}(S(f_{m_n}))$s are separated from zero. 

Applying \ref{factczerodisjoint} we may choose a strictly increasing
sequence $(n_k)$ such that $n_k>k$ and such
that  $ P_{+n_k}\circ S(f_{m_{n_k}})$ generate a copy of $c_0$.

Now we will look for some  basic sequence equivalent to
$(P_{+n_k}((S(f_{m_{n_k}})))$.  
Let $\rho>1$ be a bound for norms of a sequence biorthogonal to $(P_{+{n_k}}(S(f_{m_{n_k}})) )$.

By \ref{lemmauniformity} we can thin-out the sequence $(n_k)\subseteq \N$
so that 
 
$$|d_n (P_n(P_{+n_k}(S(f_{m_{n_k}}))))  |< {1\over 2^{k+2}\rho}$$
holds for all $n,k\in\N$

By the above and the Weierstrass-Stone theorem we can find $(g_{k})$s
which are $(C_n)$-simple, satisfy $P_{-n}(g_n)=0$ for each $n\in\N$  and such that 
$$||g_{k}-I_{+n_k}\circ P_{+n_k}\circ S(f_{m_{n_k}})||<{1\over 2^{k+2}\rho},$$
which means by the Bessaga-Pe\l czy\'nski criterion \ref{theorembessagapelczynski}
that $(g_k)$ generates a copy
of $c_0$.
Thus by \ref{lemmadnsimple} we have that $||T(g_k)||$s converge to $0$. But
$||T(g_k-I_{+n_k}\circ P_{+n_k}\circ S  (f_{m_{n_k}}) ||$s also converge to $0$ which 
means that $T\circ I_{+n_k}\circ P_{+n_k}\circ S(f_{m_{n_k}})$s converge to $0$, a contradiction
with the choice of $m_n$'s. 
\end{proof}

\begin{corollary}\label{corollarycompositions}
Suppose that $S:C(K)\rightarrow X_+$ and $T:X_+\rightarrow C(K)$
are such that for every $n\in\N$ either
\begin{enumerate}
\item   $P_{-n}\circ S$ is weakly compact or
\item  $ T\circ I_{-n}$ is weakly compact. 

\end{enumerate}
 Then $T\circ S: C(K)\rightarrow C(K)$ is weakly compact.

\end{corollary}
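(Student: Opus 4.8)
The plan is to deduce Corollary \ref{corollarycompositions} from Lemma \ref{lemmacompositions} together with the characterization of weak compactness via disjoint sequences (Theorem \ref{theoremdiesteluhl}) and its equivalence with strict singularity on $C(K)$ spaces (Theorem \ref{theorempelczynski}). First I would observe that to show $T\circ S$ is weakly compact, by Theorem \ref{theoremdiesteluhl} it suffices to show that $\|T(S(f_m))\|\to 0$ for every bounded pairwise disjoint sequence $(f_m)$ in $C(K)$. Fix such a sequence. By Fact \ref{factczero}, after passing to a subsequence we may assume either that $\|f_m\|\to 0$ (in which case there is nothing to prove, since $S$ and $T$ are bounded) or that $(f_m)$ generates a copy of $c_0$; so assume the latter, which puts us in position to apply Lemma \ref{lemmacompositions}.

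The remaining point is to verify the hypothesis of Lemma \ref{lemmacompositions}: for every $n\in\N$, $\|T\circ I_{-n}\circ P_{-n}\circ S(f_m)\|\to 0$ as $m\to\infty$. Fix $n$. In case (1) of the corollary, $P_{-n}\circ S$ is weakly compact; since $(f_m)$ is a bounded pairwise disjoint sequence in $C(K)$ and $C(K^{-n})$ is a $C(K)$ space, Theorem \ref{theoremdiesteluhl} gives $\|P_{-n}\circ S(f_m)\|\to 0$, and then composing with the bounded operators $I_{-n}$ and $T$ yields $\|T\circ I_{-n}\circ P_{-n}\circ S(f_m)\|\to 0$. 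In case (2), $T\circ I_{-n}:C(K^{-n})\to C(K)$ is weakly compact; here $(P_{-n}\circ S(f_m))_m$ is a bounded sequence in $C(K^{-n})$, and one needs the images of a \emph{disjoint} sequence to tend to zero, so I would first note that $P_{-n}$ maps the pairwise disjoint $(f_m)$ to a bounded sequence in $C(K^{-n})$ whose supports lie in the images of the disjoint supports of the $f_m$'s — actually the cleanest route is: $T\circ I_{-n}\circ P_{-n}\circ S$ is an operator $C(K)\to C(K)$ which, being a composition with the weakly compact $T\circ I_{-n}$, is itself weakly compact, so Theorem \ref{theoremdiesteluhl} applied directly to it and the disjoint sequence $(f_m)$ in $C(K)$ gives $\|T\circ I_{-n}\circ P_{-n}\circ S(f_m)\|\to 0$. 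Either way the hypothesis of Lemma \ref{lemmacompositions} holds.

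Thus Lemma \ref{lemmacompositions} yields $\|T(S(f_m))\|\to 0$, and since $(f_m)$ was an arbitrary bounded pairwise disjoint sequence in $C(K)$, Theorem \ref{theoremdiesteluhl} tells us $T\circ S$ is weakly compact. I do not expect a serious obstacle here: the whole argument is a bookkeeping exercise in feeding the right sequences into Lemma \ref{lemmacompositions}. The only mildly delicate point is the one flagged above in case (2) — making sure the disjointness hypothesis needed to invoke Theorem \ref{theoremdiesteluhl} is met — and this is handled by absorbing $I_{-n}\circ P_{-n}\circ S$ into a single operator on $C(K)$ and using that composition preserves weak compactness, so that disjointness of $(f_m)$ in the domain $C(K)$ is all that is required.
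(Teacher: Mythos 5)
Your proposal is correct and follows essentially the same route as the paper: reduce via Theorem \ref{theoremdiesteluhl} to showing $\|T(S(f_m))\|\to 0$ for bounded pairwise disjoint $(f_m)$, pass to the case where $(f_m)$ generates a copy of $c_0$, observe that under either hypothesis the composition $T\circ I_{-n}\circ P_{-n}\circ S$ is weakly compact so that $\|T\circ I_{-n}\circ P_{-n}\circ S(f_m)\|\to 0$ for each fixed $n$, and then invoke Lemma \ref{lemmacompositions}. The only cosmetic differences are that you cite Fact \ref{factczero} where the paper cites Theorem \ref{theoremdrewnowski} for the $c_0$ reduction, and that your ``cleanest route'' in case (2) --- absorbing everything into one weakly compact operator on $C(K)$ --- is exactly the paper's uniform treatment of both cases.
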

\begin{proof} Let $(f_m)$ be a bounded pairwise disjoint sequence in $C(K)$.
We will use \ref{theoremdiesteluhl}, and so we need to prove that
$(T\circ S)(f_m)$ converges to zero.  By \ref{theoremdrewnowski} we may
w.l.o.g. assume that $(f_m)$ generates a copy of $c_0$.
By the hypothesis of the corollary $ T\circ I_{-n}\circ P_{-n}\circ S$ are
weakly compact for each $n\in\N$ and so by  \ref{theoremdiesteluhl}, 
$ T\circ I_{-n}\circ P_{-n}\circ S(f_m)$ converges to zero when $m$ tends to
infinity and $n\in\N$ is fixed. So, we may apply \ref{lemmacompositions}
to conclude that $(T\circ S)(f_m)$ converges to zero and that
$T\circ S$ is weakly compact.

\end{proof}

\begin{remark}\label{remarkcompositions} Suppose that $\Lambda: C(K)\rightarrow X^+$ and
$\Theta: X^+\rightarrow C(K)$ are as in \ref{definitionlambdatheta}.
Note that $P_{-n}\circ \Lambda$ and $\Theta\circ P_{-n}$ are 
finite dimensional and
so weakly compact for all $n\in\N$ but neither $\Lambda$ nor $\Theta$ are weakly compact.
This shows that the composition cannot be replaced by a single operator in the above results.
Note that $\Theta\circ\Lambda=0$.
\end{remark}

\vskip 26pt

\section{The matrix of multiplications of an operator}

\vskip 13pt
\begin{lemma}\label{lemmaweaklycompactmultiplication}
Suppose that $K'$ and $K''$ are homeomorphic  perfect compact spaces and
$i:K''\rightarrow K'$ is a homeomorphism between them.
Let $\phi, \psi \in C(K')$ and $S:C(K')\rightarrow C(K'')$
be a weakly compact operator. Suppose that
for every $f\in C(K')$ we have
$$(\phi f)\circ i=(\psi f)\circ i+S(f).$$
Then $S=0$ and $\phi=\psi$.
\end{lemma}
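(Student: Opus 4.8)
The relation $(\phi f)\circ i=(\psi f)\circ i+S(f)$ can be rewritten, since $i$ is a bijection, as $((\phi-\psi)f)\circ i = S(f)$ for all $f\in C(K')$. Setting $\eta=\phi-\psi$, the plan is to show $\eta=0$; then $S=0$ follows immediately from the displayed identity. Since $S$ is weakly compact, by Theorem~\ref{theoremdiesteluhl} we know that $S(g_m)\to 0$ for every bounded pairwise disjoint sequence $(g_m)$ in $C(K')$; equivalently $\eta g_m\to 0$ in $C(K')$ (composing with the homeomorphism $i$ is an isometry). So it suffices to produce, assuming $\eta\neq 0$, a bounded pairwise disjoint sequence $(g_m)$ in $C(K')$ with $\|\eta g_m\|$ bounded away from $0$.

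Suppose $\eta\neq 0$, so there is a point $x_0\in K'$ with $|\eta(x_0)|=2\varepsilon>0$. By continuity there is an open neighbourhood $U$ of $x_0$ on which $|\eta|>\varepsilon$. Here I would use that $K'$ is \emph{perfect}: a perfect compact Hausdorff space has no isolated points, so $U$ is infinite, and one can find a pairwise disjoint sequence of nonempty open sets $U_m\subseteq U$. (Concretely: pick distinct points $x_1,x_2,\dots\in U$; Hausdorffness lets one separate them and shrink to a pairwise disjoint family of open neighbourhoods, all contained in $U$.) By Urysohn's lemma choose $g_m\in C(K')$ with $0\le g_m\le 1$, $g_m(x_m)=1$, and $g_m$ supported in $U_m$. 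Then $(g_m)$ is bounded and pairwise disjoint, while $\|\eta g_m\|\ge |\eta(x_m)||g_m(x_m)| = |\eta(x_m)|>\varepsilon$ for every $m$. This contradicts $\eta g_m\to 0$, so $\eta=0$, i.e. $\phi=\psi$, and then $S(f)=((\phi-\psi)f)\circ i=0$ for all $f$, so $S=0$.

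The only point requiring any care is the construction of the pairwise disjoint sequence of open sets inside $U$, which is exactly where the perfectness hypothesis on $K'$ (equivalently $K''$) is used; everything else is a routine application of Theorem~\ref{theoremdiesteluhl} together with the observation that $f\mapsto f\circ i$ is an isometric isomorphism $C(K')\to C(K'')$. I do not expect any genuine obstacle here. (One could also phrase the disjointness argument via Fact~\ref{factczero}, which directly gives that such a $(g_m)$ — or rather $(\eta g_m)$, after noting $\|\eta g_m\|>\varepsilon$ — generates a copy of $c_0$, hence cannot converge to $0$; but invoking Theorem~\ref{theoremdiesteluhl} is the most direct route.)
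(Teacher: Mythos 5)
Your proof is correct and follows essentially the same route as the paper: assume $\phi\neq\psi$, take a neighbourhood $U$ where $|\phi-\psi|>\varepsilon$, use perfectness to build a bounded pairwise disjoint sequence supported in $U$ on which $\|(\phi-\psi)g_m\|$ stays bounded away from $0$, and contradict the Diestel--Uhl characterization (Theorem~\ref{theoremdiesteluhl}) of weak compactness of $S$ via the isometry $f\mapsto f\circ i$. No issues.
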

\begin{proof}
Suppose that $\psi\not=\psi$ and choose a clopen $U\subseteq K'$ or
 and $\varepsilon>0$ such that $|(\phi-\psi)|U|>\varepsilon$.
Let $f_n$s be pairwise disjoint functions of norm one whose supports are included in $U$.
They exist since $K'$ is perfect. Note that $||(\phi-\psi)f_n||\not\rightarrow 0$
and so $||[(\phi-\psi)f_n]\circ i||\not\rightarrow 0$ since $i$ is onto $K'$.
This contradicts the fact that $S(f_n)\rightarrow 0$ by \ref{theoremdiesteluhl}.
Now $S$ must be zero as well.
\end{proof}

\vskip 13pt

Suppose that $T:X_+\rightarrow X_+$ is an operator. By \ref{lemmaexistence} (2)
for each $n,m\in\N$ there are continuous functions $\phi^T_{m,n}\in C(K_n)$
and  weakly compact operators $S^T_{m,n}:C(K_n)\rightarrow C(K_m)$ such that for each $f\in C(K_n)$
$$T(f)|K_m= [\phi^T_{m,n}f]\circ i_{n,m}+S^T_{m,n}(f).$$
We will skip the superscript $T$, if it is clear from the context.
In this section we analyze the matrix $(\phi_{m,n}^T)_{m,n\in\N}$ for an operator
$T$ on $X_+$. Note that Lemma \ref{lemmaweaklycompactmultiplication} implies that
such a decomposition of an
operator  is  unique. 

\vskip 13pt

\begin{lemma}\label{lemmasequenceconverges} Let $T:X_+\rightarrow X_+$
be an operator. Let $n\in \N$ be fixed. The sequence $(\phi^T_{m,n})_{m\in \N}$
converges to $0$.
\end{lemma}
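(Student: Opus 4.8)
The statement to be proved is that for a fixed $n$, the sequence of multiplying functions $(\phi^T_{m,n})_{m\in\N}$ tends to $0$ in $C(K_n)$. The natural strategy is to suppose otherwise and manufacture a bounded, pairwise disjoint sequence in $X_+$ on which $T$ fails to go to zero, contradicting the weak compactness half of the structure of $X_+$ --- more precisely, contradicting the fact that a suitable bounded pairwise disjoint sequence must have images converging to $0$ (Theorem \ref{theoremdiesteluhl}), or, equivalently, that the relevant functions must land inside $X_+$ (Lemma \ref{lemmacharacterizationxplus}). So assume $\|\phi^T_{m_k,n}\| > \varepsilon$ for some $\varepsilon>0$ and an increasing sequence $(m_k)$. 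For each $k$ pick a point $z_k\in K_n$ with $|\phi^T_{m_k,n}(z_k)| > \varepsilon$ and a clopen neighbourhood $U_k$ of $z_k$ on which $|\phi^T_{m_k,n}|$ stays above $\varepsilon$.

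\textbf{Key steps.} First I would use perfectness of $K_n$ (Lemma \ref{lemmaexistence}(1), via the freeness of $B$) to choose, inside each $U_k$, a clopen set $V_k$ small enough that the single fixed function $\chi_{V_k}\circ i_{*,n}$ --- i.e.\ the element of $C(K_n)$ which is the characteristic function of $V_k$ --- has image under $T$ controlled as follows: $T(\text{its extension by }0)|K_{m_k}$ equals $[\phi^T_{m_k,n}\chi_{V_k}]\circ i_{n,m_k} + S^T_{m_k,n}(\chi_{V_k})$, and the first term has norm $>\varepsilon$. The weakly compact tail $S^T_{m_k,n}$ is the nuisance; to kill it I would not work with a single $f$ but with a pairwise disjoint sequence. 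Concretely, take pairwise disjoint clopen $V_k\subseteq U_k$ (possible since $K_n$ is perfect and we may shrink), let $g_k = I_n(\chi_{V_k})\in X_+$ (extension by zero off $K_n$), which is a bounded pairwise disjoint sequence, hence generates a copy of $c_0$ by Fact \ref{factczero}. Then by Theorem \ref{theoremdiesteluhl} applied coordinatewise, $S^T_{m,n}(\chi_{V_k})\to 0$ as $k\to\infty$ for each fixed $m$; but we need this with $m=m_k$ varying. That is exactly the kind of diagonal issue handled by the uniformity/compositions machinery of Section 4. I would instead argue directly: if $\sup_m \|\phi^T_{m,n}\|\not\to 0$ is witnessed at indices $m_k\to\infty$, then consider for a single clopen $V\subseteq K_n$ with $\|\phi^T_{m_k,n}\chi_V\|>\varepsilon$ for infinitely many $k$ (possible if the $\phi$'s do not go to $0$, by passing to a subsequence and using compactness of $K_n$ to localize the large values in a common clopen $V$ --- this uses that $K_n$ is separable/second countable, or just a standard pigeonhole on a clopen base). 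Then $\|T(I_n(\chi_V))|K_{m_k}\|\ge \|[\phi^T_{m_k,n}\chi_V]\circ i_{n,m_k}\| - \|S^T_{m_k,n}(\chi_V)\|$. The first term is $>\varepsilon$ for infinitely many $k$; the second tends to $0$ because $m_k\to\infty$ and $T(I_n(\chi_V))\in X_+$ forces $\|T(I_n(\chi_V))|K_m\|$... no, that is false in general. So the correct route is to use that $S^T_{m,n}$ for $m$ large has small norm as an operator, which is itself a consequence of $T$ mapping into $X_+$: indeed $d_m(T(f)|K_m)\to 0$ uniformly for $\|f\|\le 1$ by Lemma \ref{lemmauniformity}, and one shows $\|S^T_{m,n}\|$ is then forced small via Lemma \ref{lemmaweaklycompactmultiplication}-type rigidity on the perfect pieces.

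\textbf{Main obstacle.} The real difficulty, and where I expect the argument's weight to lie, is separating the multiplication part $\phi^T_{m_k,n}$ from the weakly compact remainder $S^T_{m_k,n}$ while $m_k$ runs to infinity: for each fixed $m$ the remainder is harmless, but we must control it uniformly along a sequence of growing second indices. I would resolve this by choosing the test functions to be $(C_n)$-simple with shrinking supports and appealing to Fact \ref{factczerodisjoint} and Lemma \ref{lemmadnsimple} (or Lemma \ref{lemmacompositions}) so that the contribution of the $S$'s is absorbed: roughly, build a bounded pairwise disjoint sequence $(h_k)$ in $X_+$, supported on $K_n$, such that $T(h_k)|K_{m_k}$ has norm bounded below, then restrict each $T(h_k)$ to the clopen set $K_{m_k}$ to get a new bounded pairwise disjoint sequence whose norms stay away from $0$, hence generating a $c_0$ inside $X_+$ that violates Theorem \ref{theoremdiesteluhl}. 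The cleanest packaging is probably: apply Lemma \ref{lemmauniformity} to $T\circ I_n: C(K_n)\to X_+$ to get $d_m(T(I_n(f_k))|K_m)\le\varepsilon/4$ for all $m$ beyond some $k_0$ and all disjoint norm-one $(f_k)$; combine with $\|\phi^T_{m_k,n}\|>\varepsilon$ and Lemma \ref{lemmadistance} to find $x,y\in K_{m_k}$ with $x\bowtie y$ and $|T(I_n(f_k))(x)-T(I_n(f_k))(y)|$ bounded below after a suitable choice of $f_k$ supported near where $\phi^T_{m_k,n}$ is large --- contradicting Corollary \ref{corollary2assymptotic}. This last version is the one I would write out.
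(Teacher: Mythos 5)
Your overall shape is right: assume $\|\phi^T_{m_k,n}\|>\varepsilon$ along $m_k\to\infty$, localize, test with disjointly supported characteristic functions on $K_n$, and contradict the asymptotic symmetry of elements of $X_+$ at $\bowtie$-paired points (Corollary \ref{corollary2assymptotic}). But the step you yourself single out as the main obstacle --- controlling $S^T_{m_k,n}(f_k)$ when both indices grow together --- is never actually resolved, and the resolutions you float do not work. The claim that $\|S^T_{m,n}\|$ is ``forced small'' for large $m$ is false: the operator $T(f)=\bigl(\theta(f|K_n)\chi_{K_m}\bigr)_{m\in\N}$, for a fixed functional $\theta$ on $C(K_n)$, is bounded into $X_+$ (each coordinate is constant, hence in $Z_m$, so $d_m=0$), has $\phi^T_{m,n}=0$ by uniqueness of the decomposition, and $\|S^T_{m,n}\|=\|\theta\|$ for every $m$; neither Lemma \ref{lemmauniformity} nor Lemma \ref{lemmaweaklycompactmultiplication} yields any norm decay of the remainders. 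Likewise Lemma \ref{lemmadnsimple} and Lemma \ref{lemmacompositions} concern sequences whose supports escape to infinity across the coordinates $K_m$, whereas all your test functions live on the single fixed coordinate $K_n$, so they do not apply. The missing idea is simple but is where the proof actually lives: for each $m\in M_1$ one keeps an entire infinite pairwise disjoint family $(\alpha_{k,m})_k$ of characteristic functions inside $U_m$, and since $S^T_{m,n}$ is one weakly compact operator for that fixed $m$, Theorem \ref{theoremdiesteluhl} lets one pick $k(m)$ with $\|S^T_{m,n}(\alpha_{k(m),m})\|<\varepsilon/5$. The test function is chosen \emph{after} the index $m$, which is exactly what defuses the diagonal.

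Two further points. First, you never say why a bump appears at a $\bowtie$-pair: the reason is that the witnessing points may be assumed to lie in one half, say $K_{1,n}$, and the supports are taken inside the clopen set $K_{1,n}$, so the test function vanishes at the $\bowtie$-partner and the multiplication term contributes more than $\varepsilon$ at $y_m\in i_{m,n}[V_{k(m),m}]$ and $0$ on all of $K_{2,m}$; ``supported near where $\phi^T_{m_k,n}$ is large'' does not by itself produce any asymmetry. Second, once the $\alpha_{k(m),m}$ are chosen one still needs a single element of $X_+$ exhibiting infinitely many bumps; the paper builds it as a supremum $\sup_{m\in M}\alpha_{k(m),m}$ (Lemma \ref{lemmaexistence}(5)) whose values at the chosen point pairs are preserved via Rosenthal's lemma (Corollary \ref{corollarysupremus}), and then contradicts Corollary \ref{corollary2assymptotic}. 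Your alternative of applying Lemma \ref{lemmauniformity} to $T\circ I_n$ and the disjoint sequence $(\alpha_{k(m),m})_m$ would legitimately replace that last step, since it packages the same Rosenthal argument; it cannot, however, replace the choice of $k(m)$.
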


\begin{proof} Fix  $n\in \N$.
If the lemma is false, there are points $x_m\in K_n$ and an $\varepsilon>0$
such that $|\phi_{m,n}(x_m)|>\varepsilon$
for $m$'s from an infinite set $M_1\subseteq \N$.
We may w.l.o.g. assume that $x_m\in K_{1,n}$ or $x_m\in K_{2,n}$
for all $m\in \N$. Say $x_m\in K_{1,n}$, the other case is analogous.
 By the continuity of $\phi_{m,n}$ 
and the fact that $K_n$ is perfect
we can choose pairwise disjoint open sets $U_m\subseteq K_{1,n}$ such that 
$|\phi_{m,n}(x)|>\varepsilon$ holds for
every $x\in U_m$

For $m\in\N$ fixed let $\alpha_{k,m}:K_n\rightarrow \R$
be pairwise disjoint characteristic  functions whose supports $V_{k,m}$  are included
in $U_m$. 
Now we use a characterization \ref{theoremdiesteluhl} of  weakly compact operators on $C(K)$
spaces,  concluding
for every $m\in M_1$ that 
$||S_{m,n}(\alpha_{k,m})||$ converges to zero when
$k$ converges to infinity.
So, for each $m\in M_1$ we can choose $k(m)\in \N$
such that
$$||S_{m,n}(\alpha_{k(m),m})||<\varepsilon/5.$$ 

The definitions of $S_{m,n}$  and the behaviour
of $\phi_{m,n}$ on $U_m$ imply that for $x\in i_{m,n}[V_{k(m),m}]$ we have
$$|T(\alpha_{k(m),m})(x)|>4\varepsilon/5$$
and for all $x\in K_{2,m}$
$$|T(\alpha_{k(m),m})(x)|<\varepsilon/5$$
for each $m\in M_1$.
For $m\in M_1$ pick $y_m\in i_{m,n}[V_{k(m),m}]$ and $z_m\in K_{2,m}$  such that
 $y_m\bowtie z_m$ (see \ref{remarkbowtie}).
Consider $\zeta_m=T^*(\delta_{y_m})|K_n$ and 
$\theta_m=T^*(\delta_{z_m})|K_n$ (restrictions of measures on $K_+$to $K_n$) as functionals on $C(K_n)$.
Now apply \ref{theoremrosenthal1} twice, obtaining an infinite $M_2\subseteq M_1$
such that whenever $M\subseteq M_2$ is an infinite set
such that there is the supremum $\sup_{m\in M}\alpha_{k(m),m} =\alpha$
then $$|\zeta_m(\alpha)-\zeta_m(\alpha_{k(m),m} )|<\varepsilon/5,$$
$$|\theta_m(\alpha)-\theta_m(\alpha_{k(m),m})|<\varepsilon/5.$$ 
for $m\in M$.
Since $|\zeta_m(\alpha_{k(m),m})|>4\varepsilon/5$ and $|\theta_m(\alpha_{k(m),m})|<\varepsilon/5$ in
terms of $T$ we obtain that
for $m\in M$ we have 
$$|T(\alpha)(z_m)|>3\varepsilon/5$$
$$|T(\alpha)(y_m)|<2\varepsilon/5$$
Of course the above supremus $\alpha$ exist in $C(K_n)$ by \ref{lemmaexistence} (5)
As $T(\alpha)$ is in $X_+$  and $y_m\bowtie z_m$ this contradicts \ref{corollary2assymptotic}
and completes the proof of the lemma.
\end{proof}

\begin{lemma}\label{lemmablocks} Suppose
that $T:X_+\rightarrow X_+$ is a linear bounded operator.
Suppose that $m\in \N$, $F\subseteq \N$ is a finite set
such that  $||\sum _{n\in F}|\phi^T_{m,n}\circ i_{n,m}|||>\varepsilon$.
Then there is $f\in C(\bigcup_{n\in F}K_n)$ which is $(C_n)$-simple and of norm one such that 
$||T(f)|K_m||>\varepsilon.$
\end{lemma}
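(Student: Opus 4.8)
The plan is to pick, for each $n \in F$, a point $x_n \in K_n$ where $|\phi^T_{m,n}(i_{n,m}^{-1}(x_n))|$ is close to the maximum of $|\phi^T_{m,n}\circ i_{n,m}|$, and then to build a single function $f$ supported on small clopen neighborhoods of these points on which the ``diagonal'' contribution $[\phi^T_{m,n}f]\circ i_{n,m}$ is large while the weakly compact error terms $S^T_{m,n}$ stay negligible. Concretely, first I would fix $\delta>0$ with $\sum_{n\in F}|\phi^T_{m,n}\circ i_{n,m}| > \varepsilon$ holding with a bit of room, so that I can afford additive losses of size $\delta$. For each $n\in F$ choose $x_n\in K_n$ realizing (up to $\delta/|F|$) the sup of $|\phi^T_{m,n}\circ i_{n,m}|$; since each $K_n$ is perfect and totally disconnected, choose a clopen neighborhood $U_n$ of $x_n$, shrinkable at will, on which $|\phi^T_{m,n}|$ stays within $\delta/|F|$ of its value at $x_n$, and with the $U_n$ mutually disjoint (they live in pairwise disjoint $K_n$'s, so this is automatic). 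Inside each $U_n$ take a further pairwise disjoint sequence $\alpha_{k,n}$ of characteristic functions of shrinking clopen sets $V_{k,n}\subseteq U_n$.

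The key step is killing the weakly compact part. For $n\in F$ fixed, the sequence $(\alpha_{k,n})_{k\in\N}$ is bounded and pairwise disjoint in $C(K_n)$, so by Theorem \ref{theoremdiesteluhl} applied to $S^T_{m,n}$ we get $\|S^T_{m,n}(\alpha_{k,n})\|\to 0$ as $k\to\infty$. Hence for each $n\in F$ we may pick $k(n)$ with $\|S^T_{m,n}(\alpha_{k(n),n})\| < \delta/|F|$. Now set $f = \sum_{n\in F}\, \varepsilon_n\,\alpha_{k(n),n}$, where $\varepsilon_n\in\{-1,+1\}$ is a sign chosen so that the numbers $\varepsilon_n\,\phi^T_{m,n}(i_{n,m}^{-1}(y))$ all have, say, nonnegative real part (if we are over $\R$ this is just the sign of $\phi^T_{m,n}$ near $x_n$) for $y$ in the clopen set $i_{n,m}[V_{k(n),n}]\subseteq K_m$; this makes the diagonal contributions add up rather than cancel. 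Note $\|f\|=1$ since the summands are pairwise disjoint of norm one, and $f$ is $(C_n)$-simple because each $\alpha_{k(n),n}$ is a characteristic function of a clopen set (one may, if needed, further refine $V_{k(n),n}$ to lie inside a clopen set from $C_n$, or simply observe a characteristic function of any clopen set assumes two values). The remaining step is the estimate at a well-chosen point of $K_m$: picking $n_0\in F$ with $|\phi^T_{m,n_0}\circ i_{n_0,m}|$ realizing the largest summand, and a point $y\in i_{n_0,m}[V_{k(n_0),n_0}]$, we compute
$$T(f)(y) = \sum_{n\in F} \varepsilon_n\,\big[\phi^T_{m,n}\alpha_{k(n),n}\big](i_{n,m}^{-1}(y)) + \sum_{n\in F}\varepsilon_n\, S^T_{m,n}(\alpha_{k(n),n})(y).$$
For $n\neq n_0$ the first sum's $n$-th term vanishes because $i_{n,m}^{-1}(y)\notin V_{k(n),n}$ (the supports are disjoint), leaving the single diagonal term, which has modulus within $2\delta/|F|$ of $\|\phi^T_{m,n_0}\circ i_{n_0,m}\| \geq \varepsilon/|F|$; wait — that alone is too weak, so instead I would not reduce to one point but rather observe that the correct choice is to make \emph{all} diagonal terms reinforce: this is why we need the signs $\varepsilon_n$ and why we evaluate at a point $y$ lying in $i_{n,m}[V_{k(n),n}]$ for \emph{every} $n$ simultaneously. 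That is impossible for distinct $n$, so the genuinely correct route is the averaging one: the point is that $\sum_{n\in F}\|\phi^T_{m,n}\circ i_{n,m}\| > \varepsilon$ means \emph{some} single $\phi^T_{m,n_0}\circ i_{n_0,m}$ need not be large, but the supports in $K_m$ of the various diagonal pieces overlap, and at a point where $|\phi^T_{m,n}\circ i_{n,m}|$ is summed we recover $>\varepsilon$; choosing $y\in K_m$ with $\sum_{n\in F}|\phi^T_{m,n}(i_{n,m}^{-1}(y))| > \varepsilon$ and the $V_{k(n),n}$ chosen as clopen neighborhoods of $i_{n,m}^{-1}(y)$ (all containing that one preimage point) does the job, with the signs lined up at that $y$.

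The main obstacle, then, is exactly this bookkeeping: one must choose the point $y\in K_m$ \emph{first} (where $\sum_n|\phi^T_{m,n}\circ i_{n,m}|$ exceeds $\varepsilon$), then take each $V_{k(n),n}$ to be a small clopen neighborhood of the point $i_{n,m}^{-1}(y)\in K_n$ — small enough that $\phi^T_{m,n}$ is nearly constant on it and that $\|S^T_{m,n}(\chi_{V_{k(n),n}})\|$ is tiny (the latter needing Theorem \ref{theoremdiesteluhl} together with perfectness of $K_n$ to produce arbitrarily small such clopen sets inside a prescribed neighborhood, carrying pairwise disjoint subsequences). Then with $f=\sum_{n\in F}\varepsilon_n\chi_{V_{k(n),n}}$ we get $\|f\|=1$, $f$ is $(C_n)$-simple, and $|T(f)(y)| \geq \sum_{n\in F}|\phi^T_{m,n}(i_{n,m}^{-1}(y))| - \sum_{n\in F}(\delta/|F|) - \sum_{n\in F}(\delta/|F|) > \varepsilon - 2\delta > \varepsilon$ after the initial slack is absorbed, whence $\|T(f)|K_m\| > \varepsilon$ as required. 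I would also double-check that the preimages $i_{n,m}^{-1}(y)$ for distinct $n$ cause no conflict — they lie in disjoint spaces $K_n$, so the $V_{k(n),n}$ are automatically disjoint and $f$ is well-defined on $\bigcup_{n\in F}K_n$.
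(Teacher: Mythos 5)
Your final version of the argument (choose the point $y\in K_m$ where $\sum_{n\in F}|\phi^T_{m,n}\circ i_{n,m}|(y)>\varepsilon$ first, pull back to small clopen neighbourhoods of the points $i_{n,m}(y)\in K_n$, align signs so the diagonal terms reinforce, and kill the $S^T_{m,n}$ by running a pairwise disjoint sequence of such neighbourhoods through Theorem \ref{theoremdiesteluhl}) is exactly the paper's strategy, and that part is fine. But there is a genuine gap in your treatment of $(C_n)$-simplicity. By Definition \ref{definitiondnsimple}, $f$ is $(C_n)$-simple when $f|K_n$ assumes finitely many values \emph{on clopen sets from $C_n$}, i.e.\ $f|K_n$ is a finite linear combination of characteristic functions of elements of $C_n$; and $C_n$ consists only of the $j_n$-symmetric sets $b\cup j_n(b)$. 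The characteristic function of a small clopen neighbourhood $V$ of a single point of $K_{1,n}$ (or of $K_{2,n}$) is never of this form (unless $V=\emptyset$), so your $f=\sum_{n\in F}\varepsilon_n\chi_{V_{k(n),n}}$ is not $(C_n)$-simple. Your two proposed remedies do not work: refining $V$ to lie \emph{inside} a set from $C_n$ does not put $\chi_V$'s level sets in $C_n$, and ``a characteristic function assumes two values'' misreads the definition, which constrains \emph{where} the values are assumed, not how many there are.

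The repair is the specific symmetrization step the paper performs: choose the sets $V_l$ from the dense subalgebra $B_m$ inside $K_{1,m}$ (say), set $W_l=j[V_l]\subseteq K_{2,m}$, and take as the $n$-th block the characteristic function of $i_{n,m}[V_l]\cup i_{n,m}[W_l]$, which \emph{is} an element of $C_n$, so the resulting $f_l$ is $(C_n)$-simple. One then checks that the mirror half does not spoil the estimate: evaluating at $y\in V_{l}\subseteq K_{1,m}$, the points $i_{n,m}(y)$ all lie in $K_{1,n}$, so the added part supported in $K_{2,n}$ contributes nothing to the multiplication term, and the weakly compact error is still controlled because the symmetrized functions $f_l$ remain pairwise disjoint, so $\|\sum_{n\in F}S^T_{m,n}(f_l|K_n)\|\to 0$ as $l\to\infty$. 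Without this step the lemma's conclusion, which is used downstream precisely to feed $(C_n)$-simple functions into Lemma \ref{lemmadnsimple}, is not established. (A minor point: in the paper's notation $i_{n,m}$ maps $K_m$ to $K_n$, so the preimage you call $i_{n,m}^{-1}(y)$ is just $i_{n,m}(y)$.)
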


\begin{proof} Let $x\in K_m$ be such that $\sum _{n\in F}|\phi^T_{m,n}\circ i_{n,m}|(x)>\varepsilon$.
Either  $x\in K_{1,m}$ or  $x\in K_{2,m}$. Say $x\in K_{1,m}$,
the other case is analogous.
By the continuity of $\phi_{m,n}$ 
and the fact that $K_n$ is perfect
we can find clopen $U\subseteq K_{1,m}$ such that 
$\sum_{n\in F}|\phi_{m,n}\circ i_{n,m}(x)|>\varepsilon+\varepsilon'$ holds for
every $x\in U$ and some $\varepsilon'>0$ and such that no $\phi_{m,n}$ for $n\in F$ changes its sign on $i_{n,m}[U]$
(if $\phi_{m,n}(x)=0$ we may remove $n$ from $F$).
Let $V_{l}\subseteq U\subseteq K_{1,m}$ for $l\in\N$ be pairwise disjoint. By
the density
of   $B_m$ in $Clop(K_{1,m})$ we may choose $V_l$s to be in $B_m$.
Let $$\alpha_{n,l}=\delta_n\chi_{i_{n,m}[V(l)]}$$
 where $\delta_n=\pm 1$ and its sign  is the same as $\phi_{m,n}$ on $i_{n,m}[U]$
so 
$$\sum_{n\in F}(\phi_{m,n}\alpha_{n,l})\circ(i_{n,m}(x))>\varepsilon+\varepsilon'.$$ 
for each $x\in V_{l}$ and each $l\in \N$.

Let $W_{l}=j[V_l]\subseteq K_{2,m}$ and let $\beta_{n,l}=\delta_n\chi_{i_{n,m}[W_l]}$
Then, for each $l\in \N$
 $$f_l=\sum_{n\in F}\alpha_{n,l}+\beta_{n,l}$$
is $(C_n)$-simple,  $f_l\in C(\bigcup_{n\in F}K_n)$ and 
$$\sum_{n\in F}(\phi_{m,n}f_l)(i_{n,m}(x))>\varepsilon+\varepsilon'.$$ 
for each $x\in V_{l}$. Moreover  $f_l$s are pairwise disjoint.

Now we use a characterization \ref{theoremdiesteluhl} of a weakly compact operators on $C(K)$
spaces,  
so 
 we have that 
$||\sum_{n\in F}S_{m,n}(\alpha_{n,l}+\beta_{n,l})||$ converges to zero when
$l$ converges to infinity.
So, there is an $l_0\in \N$ such that 
such that $$||\sum_{n\in F}S_{m,n}(\alpha_{n,l_0}+\beta_{n,l_0})|| <\varepsilon'$$ 
So, for $x\in V_l$ we have 
$$|T(f_{l_0})(x)|\geq
|\sum_{n\in F}(\phi_{m,n}f_{l_0})(i_{n,m}(x))|- ||\sum_{n\in F}S_{m,n}(\alpha_{n,l_0}+\beta_{n,l_0})||
\geq \varepsilon$$
and hence $||T(f)|K_m||>\varepsilon$ for $f=f_{l_0}$.
\end{proof}

\begin{lemma}\label{lemmaseriesconverges} Let $T:X_+\rightarrow X_+$
be an operator. Let $m\in \N$ be fixed. The series $\sum _{n\in \N}|\phi^T_{m,n}\circ i_{n,m}|$
converges
 uniformly.
\end{lemma}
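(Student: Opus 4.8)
The plan is to argue by contradiction: if the series $\sum_{n\in\N}|\phi^T_{m,n}\circ i_{n,m}|$ does not converge uniformly, then since the partial sums are increasing continuous functions on the compact space $K_m$, non-uniform convergence of the series means there is an $\varepsilon>0$ and, for every $k\in\N$, a finite set $F_k\subseteq\N$ with $\min F_k>k$ such that $\big\|\sum_{n\in F_k}|\phi^T_{m,n}\circ i_{n,m}|\big\|>\varepsilon$. (One first notes the series converges pointwise: by Lemma \ref{lemmasequenceconverges} each term goes to $0$, and more to the point, boundedness of the partial sums follows because $T$ applied to suitable test functions lives in $X_+$; but the cleanest route is just to assume non-uniform convergence of the increasing sequence of partial sums and extract the blocks $F_k$ directly by Dini-type reasoning.) By passing to a subsequence I may assume the blocks $F_k$ are pairwise disjoint, with $\max F_{k}<\min F_{k+1}$.

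Next I would apply Lemma \ref{lemmablocks} to each block: for every $k$ there is a $(C_n)$-simple function $f_k\in C(\bigcup_{n\in F_k}K_n)$ of norm one with $\|T(f_k)|K_m\|>\varepsilon$. Viewing each $f_k$ as an element of $X_+$ (extended by zero off $\bigcup_{n\in F_k}K_n$, which is legitimate since it has finitely many nonzero coordinates), the pairwise disjointness of the index blocks $F_k$ makes the $f_k$ pairwise disjoint in $X_+$, each of norm one; by Fact \ref{factczero} the sequence $(f_k)$ generates a copy of $c_0$ in $X_+$. Moreover each $f_k$ is $(C_n)$-simple and, since $\min F_k>k$, we have $f_k|K^{-k}=0$, i.e. $P_{-k}(f_k)=0$.

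Now I want to feed $(f_k)$ into Lemma \ref{lemmadnsimple}, but that lemma is stated for operators $X_+\to C(K)$, whereas here I have $T:X_+\to X_+$ followed by restriction to $K_m$. The remedy is to compose: let $R=i_{*,m}\circ P_m:X_+\to C(K)$, that is, $R(g)=\big(T(g)|K_m\big)\circ i_{m,*}$ transported to $C(K)$ via the fixed homeomorphism $i_{*,m}$ (so $\|R(g)\|=\|T(g)|K_m\|$ up to the isometry $i_{*,m}$; strictly one should apply the hypotheses to $T$ and then restrict). Then $R\circ T'$ — or rather the composite $g\mapsto T(g)|K_m$ read into $C(K)$ — is a bounded operator $X_+\to C(K)$ satisfying the hypotheses (1)–(3) of Lemma \ref{lemmadnsimple} with the sequence $(f_k)$, so $\|T(f_k)|K_m\|\to 0$, contradicting $\|T(f_k)|K_m\|>\varepsilon$ for all $k$. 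This contradiction proves the series converges uniformly.

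The main obstacle is the very first step: verifying that non-uniform convergence genuinely produces such blocks $F_k$ with arbitrarily large minimum. One must rule out the series failing to converge pointwise at all, and one must get the blocks to have $\min F_k\to\infty$ so that hypothesis (2) of Lemma \ref{lemmadnsimple} holds. For the former, either invoke pointwise convergence (each coordinate $\phi^T_{m,n}$ tends to $0$ by Lemma \ref{lemmasequenceconverges}, and the partial sums are bounded because otherwise Lemma \ref{lemmablocks} would already yield, via Fact \ref{factczero} and Lemma \ref{lemmadnsimple}, a sequence contradicting weak compactness — so pointwise boundedness must hold) or observe that a bounded increasing sequence of continuous functions on a compact space that does not converge uniformly must have a tail whose oscillation stays above some $\varepsilon$ on a shrinking family of blocks; extracting these carefully, and arranging disjointness of the $F_k$, is the delicate bookkeeping. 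Everything after that is a direct chaining of Lemmas \ref{lemmablocks}, \ref{factczero} and \ref{lemmadnsimple}.
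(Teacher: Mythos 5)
Your proof is correct and follows essentially the same route as the paper's: extract pairwise disjoint blocks witnessing the failure of uniform convergence, apply Lemma \ref{lemmablocks} to each block to get norm-one $(C_n)$-simple functions $f_k$ with disjoint supports and $\|T(f_k)|K_m\|>\varepsilon$, invoke Fact \ref{factczero} to see that $(f_k)$ generates a copy of $c_0$, and contradict Lemma \ref{lemmadnsimple} (applied, as you note, to $T$ composed with the restriction $P_m$). The extraction step you flag as the main obstacle is in fact immediate from the uniform Cauchy criterion for a series of non-negative continuous functions on a compact space, so no pointwise-convergence or Dini-type argument is needed.
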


\begin{proof} Fix  $m\in \N$. If the series does not converge uniformly, then
there is $\varepsilon>0$ and  there are
strictly increasing $n_k\in\N$ and $x_k\in K_m$ for $k\in \N$ such that
$$\sum_{n_k\leq n<n_{k+1}}|\phi_{m,n}(i_{n,m}(x_k))|>\varepsilon.$$
We may w.l.o.g. assume that $k<n_k$. 

By \ref{lemmablocks} we can find $f_k\in C(\bigcup_{n_k\leq n<n_{k+1}}K_n)$ 
which are $(C_n)$-simple and of norm one such that $||T(f_k)||>\varepsilon$.
As supports of $f_k$s are disjoint they generate a copy of $c_0$ by \ref{factczero} .
So we obtain a contradiction with \ref{lemmadnsimple}. 
\end{proof}

\begin{lemma}\label{lemmaseriesmultiplication}
Let $T:X_+\rightarrow X_+$
be an operator and $f=(f_n)\in X_+$. The series $\sum _{n\in \N}[\phi^T_{m,n}f_n]\circ i_{n,m}$
converges uniformly to a function $g\in C(K_m)$ satisfying $||g||\leq ||T||||f||$.
\end{lemma}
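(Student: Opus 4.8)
The plan is to deduce uniform convergence of $\sum_n [\phi^T_{m,n}f_n]\circ i_{n,m}$ from the uniform convergence of $\sum_n |\phi^T_{m,n}\circ i_{n,m}|$ established in Lemma \ref{lemmaseriesconverges}, together with the boundedness of $(f_n)$ in $X_+$ (Lemma \ref{lemmacharacterizationxplus}). First I would note that since $\|f_n\|\le \|f\|$ for all $n$, we have the pointwise estimate
$$\left|\sum_{n\in F}[\phi^T_{m,n}f_n]\circ i_{n,m}(x)\right|\le \|f\|\sum_{n\in F}|\phi^T_{m,n}\circ i_{n,m}|(x)$$
for every finite $F\subseteq\N$ and every $x\in K_m$. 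By Lemma \ref{lemmaseriesconverges} the tails $\sum_{n\ge N}|\phi^T_{m,n}\circ i_{n,m}|$ converge uniformly to $0$, so the partial sums of $\sum_n [\phi^T_{m,n}f_n]\circ i_{n,m}$ form a uniformly Cauchy sequence of continuous functions on the compact space $K_m$; hence they converge uniformly to a continuous limit $g\in C(K_m)$.

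Next I would establish the norm bound. Here one cannot simply take the supremum of $\sum_n |\phi^T_{m,n}\circ i_{n,m}|$ and multiply by $\|f\|$, since that quantity need not be bounded by $\|T\|$. Instead the cleanest route is to identify $g$ as a genuine restriction of an image under $T$. The point is that the operator $T$ applied to $f=(f_n)$ gives $T(f)\in X_+$, and by the defining decomposition $T(h)|K_m=[\phi^T_{m,n}h]\circ i_{n,m}+S^T_{m,n}(h)$ applied coordinatewise — more precisely, applying $T$ to the truncations $I_{-N}\circ P_{-N}(f)=(f_0,\dots,f_N,0,0,\dots)$ and using that $S^T_{m,n}$ is a genuine operator — one gets
$$T(I_{-N}\circ P_{-N}(f))|K_m=\sum_{n\le N}[\phi^T_{m,n}f_n]\circ i_{n,m}+\sum_{n\le N}S^T_{m,n}(f_n).$$
So it suffices to argue that $\sum_{n\le N}S^T_{m,n}(f_n)\to 0$ uniformly as $N\to\infty$; granting this, the left-hand side converges to $g$ (its norm being at most $\|T\|\,\|f\|$ since $\|I_{-N}\circ P_{-N}(f)\|\le\|f\|$), and the bound $\|g\|\le\|T\|\,\|f\|$ follows.

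The main obstacle is exactly this last claim, that $\sum_{n\le N}S^T_{m,n}(f_n)$ converges uniformly. I expect to handle it by the same contradiction scheme used in Lemma \ref{lemmaseriesconverges}: if the tails do not go to zero uniformly, one extracts an increasing sequence $n_k$ and points $x_k\in K_m$ witnessing $\big|\sum_{n_k\le n<n_{k+1}}S^T_{m,n}(f_n)(x_k)\big|>\varepsilon$; combining with the already-proven uniform convergence of $\sum_n|\phi^T_{m,n}\circ i_{n,m}|$ one gets that the ``block'' functions $g_k=(0,\dots,f_{n_k},\dots,f_{n_{k+1}-1},0,\dots)$, suitably approximated by $(C_n)$-simple functions via Stone-Weierstrass and Bessaga--Pe{\l}czy\'nski (Theorem \ref{theorembessagapelczynski}), generate a copy of $c_0$ with $\|T(g_k)|K_m\|$ bounded away from $0$, contradicting Lemma \ref{lemmadnsimple}. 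Alternatively, and perhaps more simply, one observes that $\sum_{n\le N}S^T_{m,n}(f_n) = T(I_{-N}\circ P_{-N}(f))|K_m - \sum_{n\le N}[\phi^T_{m,n}f_n]\circ i_{n,m}$, the first term being Cauchy in $N$ because $I_{-N}\circ P_{-N}(f)\to f$ in $X_+$ (using $\lim_n d_n(f_n)=0$ so that the truncations do converge in $X_+$) and $T$ is continuous, and the second term Cauchy by the first paragraph; hence the difference is Cauchy and its limit, being $g - T(f)|K_m + (\text{tail of the }\phi\text{-series}) $, one reads off that $g=T(f)|K_m$ up to the already-controlled pieces, giving $\|g\|\le\|T\|\,\|f\|$ directly. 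I would present this second argument, as it avoids re-running the block-extraction machinery.
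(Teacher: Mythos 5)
Your first paragraph (uniform convergence of the partial sums) is correct and is exactly the paper's argument. The problem is the norm bound, where you talk yourself out of the right proof and into a wrong one. Your claim that one ``cannot simply take the supremum of $\sum_n |\phi^T_{m,n}\circ i_{n,m}|$ and multiply by $\|f\|$, since that quantity need not be bounded by $\|T\|$'' is false: that supremum \emph{is} bounded by $\|T\|$, and this is precisely how the paper concludes. Indeed, if $\sum_{n\in\N}|\phi^T_{m,n}(i_{n,m}(x))|>\|T\|$ at some $x\in K_m$, then some finite partial sum already exceeds $\|T\|$ there, and Lemma \ref{lemmablocks} then produces a norm-one $f'$ supported on finitely many $K_n$'s with $\|T(f')|K_m\|>\|T\|$ --- a contradiction. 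Combined with your own pointwise estimate this gives $\|g\|\le\|f\|\,\bigl\|\sum_n|\phi^T_{m,n}\circ i_{n,m}|\bigr\|\le\|T\|\,\|f\|$ immediately.

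The replacement argument you propose instead has several genuine gaps. First, $I_{-N}\circ P_{-N}(f)\to f$ in $X_+$ is false in general: the membership criterion of Lemma \ref{lemmacharacterizationxplus} is $d_n(f_n)\to 0$, not $\|f_n\|\to 0$, so for $f\equiv 1$ the truncation error has norm $1$ for every $N$. Second, $\sum_{n\le N}S^T_{m,n}(f_n)$ need not tend to $0$, and $g$ need not equal $T(f)|K_m$: taking $T=I_0\circ\Theta$ with $\Theta$ as in Definition \ref{definitionlambdatheta}, every $P_m T I_n$ is rank one, so all $\phi^T_{m,n}=0$ and $g=0$, while $T(f)|K_0=\Theta(f)\ne 0$ in general (this is the content of Remark \ref{remarkmultiplications}: $\Pi^T$ does not approximate $T$ modulo anything small). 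Third, even if you only establish that the $S$-sum is Cauchy with some limit $h$, your identity yields $\|g+h\|\le\|T\|\,\|f\|$, which gives no control on $\|g\|$ alone. So neither variant of your second step can deliver the stated bound; the block-lemma route is not an optional extra here but the actual mechanism of the proof.
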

\begin{proof}
Note that for each $x\in K_m$
$$|\sum_{n_k\leq n<n_{k+1}}[\phi_{m,n}f_n](i_{n,m}(x))|\leq 
 ||f|| (\sum_{n_k\leq n<n_{k+1}}|\phi_{m,n}(i_{n,m}(x))|)\leq$$
$$\leq||f|| ||\sum_{n_k\leq n<n_{k+1}}|\phi_{m,n}\circ i_{n,m}|||,$$
and so by Lemma \ref{lemmaseriesconverges} the partial sums of $\sum _{n\in \N}[\phi^T_{m,n}f_n]\circ i_{n,m}$
satisfy the   uniform Cauchy condition and so $\sum _{m\in \N}[\phi^T_{n,m}f_n]\circ i_{n,m}$
is uniformly convergent. Call the limit $g\in C(K_m)$.

For the proof of the second part of the lemma,  note that if $||g||> ||T||||f||$, for some
$f\in X_+$, then by the above inequalities
$\sum_{1\leq n\leq n_1}|\phi_{n,m}(i_{n,m}(x))|>||T||$ but this would give
an $f'\in C(\bigcup_{1\leq n\leq n_1}K_n)$ of norm one such that $||T(f')||>||T||$ by \ref{lemmablocks},
 a contradiction.

\end{proof}

\begin{lemma}\label{lemmamultiplication} Let $T:X_+\rightarrow X_+$ be a bounded linear operator.
There is a bounded linear  operator $\Pi^T: X_0\rightarrow X_0$  which satisfies
$$\Pi^T(f)|K_m=[\phi^T_{m,n}f]\circ i_{n,m}$$
for $f\in C(K_n)$ and whose norm is not bigger than $||T||$.
\end{lemma}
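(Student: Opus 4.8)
The plan is to define $\Pi^T$ coordinatewise by the formula in the statement and then verify that it is a well-defined bounded operator on $X_0$. First I would fix $f = (f_n) \in X_0$ and, for each $m \in \N$, set $\Pi^T(f)|K_m = \sum_{n\in\N} [\phi^T_{m,n} f_n]\circ i_{n,m}$; note this agrees with the stated formula because $f = \sum_n I_n(f_n)$ in a suitable sense, and the only contribution of $I_n(f_n)$ to the $K_m$-coordinate of $T$ comes from the $(m,n)$-entry of the matrix. The function $\Pi^T(f)|K_m$ lies in $C(K_m)$ by Lemma \ref{lemmaseriesmultiplication}, which also gives $\|\Pi^T(f)|K_m\| \le \|T\|\,\|f\|$; since this bound is uniform in $m$, the sequence $(\Pi^T(f)|K_m)_{m\in\N}$ is bounded, so it at least defines an element of $X_\infty$.

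The main point is to show $\Pi^T(f) \in X_0$, i.e. that $\|\Pi^T(f)|K_m\| \to 0$ as $m\to\infty$, rather than merely lying in $X_\infty$; this is the step I expect to be the main obstacle. The cleanest route is to exploit that $T$ itself maps $X_+$ into $X_+ \subseteq X_\infty$ and to compare $\Pi^T(f)$ with $T$ applied to truncations of $f$. Precisely, for $f\in X_0$ and $k\in\N$ write $f^{(k)} = I_{-k}\circ P_{-k}(f) \in X_0$, a function supported on finitely many $K_n$'s. Then $T(f^{(k)}) \in X_+$, and its $K_m$-coordinate equals $\sum_{n\le k}[\phi^T_{m,n}f_n]\circ i_{n,m} + \sum_{n\le k}S^T_{m,n}(f_n)$; since $T(f^{(k)})\in X_+ \subseteq X_0$ only when... — actually here one should instead argue directly: by Lemma \ref{lemmaseriesmultiplication} applied with $f$ replaced by $f - f^{(k)}$ (which has $\|f-f^{(k)}\|$ controlled by $\sup_{n>k}\|f_n\|$, and this tends to $0$ as $k\to\infty$ because $f\in X_0$), we get $\|\sum_{n>k}[\phi^T_{m,n}f_n]\circ i_{n,m}\| \le \|T\|\sup_{n>k}\|f_n\|$ uniformly in $m$. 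Fix $\varepsilon>0$ and choose $k$ with $\|T\|\sup_{n>k}\|f_n\| < \varepsilon/2$. It then remains to bound the finite sum $\sum_{n\le k}[\phi^T_{m,n}f_n]\circ i_{n,m}$ for large $m$; but for each fixed $n\le k$, Lemma \ref{lemmasequenceconverges} gives $\|\phi^T_{m,n}\| \to 0$ as $m\to\infty$, so $\|\sum_{n\le k}[\phi^T_{m,n}f_n]\circ i_{n,m}\| \le \sum_{n\le k}\|\phi^T_{m,n}\|\,\|f_n\| < \varepsilon/2$ for all $m$ beyond some $m_0$. Combining, $\|\Pi^T(f)|K_m\| < \varepsilon$ for $m > m_0$, which is exactly $\Pi^T(f)\in X_0$.

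Finally, linearity of $\Pi^T$ is immediate from the linearity of the matrix entries $\phi^T_{m,n}$ and of uniform limits, and the norm bound $\|\Pi^T\| \le \|T\|$ follows from the uniform-in-$m$ estimate $\|\Pi^T(f)|K_m\| \le \|T\|\,\|f\|$ established above. The formula $\Pi^T(f)|K_m = [\phi^T_{m,n}f]\circ i_{n,m}$ for $f\in C(K_n)$ (meaning $f$ viewed as an element of $X_0$ supported on $K_n$) holds since in that case all summands with index $\ne n$ vanish. The one subtlety to be careful about is the interchange of the "sum over $n$" defining $\Pi^T$ with the restriction and limit operations; this is justified by the uniform convergence supplied by Lemma \ref{lemmaseriesmultiplication}, so no genuinely new estimate is needed beyond what is quoted. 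Hence the only real work is the two-part splitting (tail controlled by $\|T\|\sup_{n>k}\|f_n\|$ via Lemma \ref{lemmaseriesmultiplication}, head controlled by $\|\phi^T_{m,n}\|\to 0$ via Lemma \ref{lemmasequenceconverges}), which together force $\Pi^T(f)\in X_0$.
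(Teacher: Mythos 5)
Your proposal is correct and follows essentially the same route as the paper: define $\Pi^T$ coordinatewise, get boundedness into $X_\infty$ from Lemma \ref{lemmaseriesmultiplication}, and deduce that the image lies in $X_0$ from Lemma \ref{lemmasequenceconverges}. The only difference is cosmetic: the paper notes that each $C(K_n)$ is mapped into $X_0$ and then passes to all of $X_0$ implicitly (density of finitely supported elements plus boundedness), whereas you make that last step explicit with the head/tail splitting.
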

\begin{proof} Let $f\in X_0$ and $f_n=f|K_n$. Fix $n\in\N$, 
by \ref{lemmaseriesmultiplication}
for every $m\in\N$ the series  $\sum_{n\in \N}[\phi_{m,n}f_n]\circ i_{n,m}$
converges uniformly and its norm is not bigger than $||T||||f||$. So
for $f=(f_n)\in X_0$ we can define
$\Pi^T(f)$ by requiring
$$\Pi^T(f)|K_m=[\sum_{n\in \N}\phi^T_{m,n}f_n]\circ i_{n,m}$$
which agrees with the condition of the lemma for $f\in C(K_n)$. It is a bounded operator
into $X_\infty$ whose norm is not bigger than $||T||$ by \ref{lemmaseriesconverges}. 
However, its  restriction to any $C(K_n)$ is into
$X_0$ by Lemma \ref{lemmasequenceconverges} so on $X_0$, it is into $X_0$.

\end{proof}

\begin{lemma}\label{lemmaweaklycompactreminders} Let $T,R: X_+\rightarrow X_+$ be any operators and $m,n\in\N$.
The operators from $C(K_n)$ into $C(K_m)$ given by 
\begin{itemize}
\item $P_{m} (\Pi^T-T) RI_{n}$,
\item $P_{m} (\Pi^R-R) T I_{n}$,

\item $P_{m} (\Pi^T-T) (\Pi^R-R) I_{n}$,   
\item $P_{m} (\Pi^R-R)(\Pi^T-T) I_{n}$,    

\item $P_{m}R (\Pi^T-T) I_{n}$

\item $P_{m}T (\Pi^R-R) I_{n}$,  
\end{itemize}
  are all weakly compact.
\end{lemma}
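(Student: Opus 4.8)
The plan is to reduce all six items to a single mechanism: the "difference" $\Pi^T-T$ has the effect, on each $C(K_n)$, of replacing $T|C(K_n)$ by its multiplication part alone, i.e. for $f\in C(K_n)$ we have $(\Pi^T-T)(f)|K_m = -S^T_{m,n}(f)$, a single weakly compact operator for each fixed pair $(m,n)$. So $P_m(\Pi^T-T)I_n = -S^T_{m,n}$ is weakly compact outright; the content of the lemma is that composing such a "matrix of weakly compact entries" operator with an arbitrary bounded operator on $X_+$, on the side where the $K_n$-coordinate is pinned down, still yields something weakly compact from $C(K_n)$ to $C(K_m)$. The tool for this is Corollary~\ref{corollarycompositions}: an operator of the form $T\circ S:C(K)\to C(K)$ is weakly compact provided that for every $\ell\in\N$ either $P_{-\ell}\circ S$ or $T\circ I_{-\ell}$ is weakly compact. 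I will apply it after conjugating by the fixed homeomorphisms $i_{*,n}:C(K)\to C(K_n)$ and $i_{m,*}:C(K_m)\to C(K)$ from the construction, which turn any operator $C(K_n)\to C(K_m)$ into an operator $C(K)\to C(K)$ without affecting weak compactness.

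I would organize the proof around the two basic building blocks. First block: $P_m(\Pi^T-T)I_n$ is weakly compact, as noted above, because it equals $-S^T_{m,n}$. Equivalently and more usefully, for \emph{any} bounded $R:X_+\to X_+$ and any fixed $n$, the composite $P_m\circ(\Pi^R-R)\circ I_\ell\circ P_\ell\circ T\circ I_n$ is weakly compact for each $\ell$, being a composition in which the middle factor $P_m(\Pi^R-R)I_\ell = -S^R_{m,\ell}$ is weakly compact (weak compactness is a two-sided ideal). So to handle item~1, $P_m(\Pi^T-T)RI_n$, set $S := R\circ I_n$ (viewed as $C(K)\to X_+$ after conjugating the source by $i_{*,n}$) and $U := P_m\circ(\Pi^T-T)$ (viewed as $X_+\to C(K)$ after conjugating the target by $i_{m,*}$). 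We must check the hypothesis of~\ref{corollarycompositions}: for each $\ell$, $U\circ I_{-\ell}$ is weakly compact. But $U\circ I_{-\ell}\circ P_{-\ell}= P_m(\Pi^T-T)I_{-\ell}P_{-\ell}$ decomposes as a finite sum over $i\le\ell$ of $P_m(\Pi^T-T)I_i P_i$, each summand weakly compact as just observed; hence $U\circ I_{-\ell}$ is weakly compact. Therefore $U\circ S = P_m(\Pi^T-T)RI_n$ is weakly compact. Item~2 is symmetric, exchanging the roles of $T$ and $R$. Items~5 and~6 are the same argument with $S := R$ (resp. $T$) on the left and $U := P_m(\Pi^T-T)$ (resp. $P_m(\Pi^R-R)$) again supplying the weakly-compact-on-$I_{-\ell}$ side; here one uses $P_m(\Pi^T-T)I_{-\ell}$ weakly compact exactly as above, so $P_mR(\Pi^T-T)I_n = (P_mR)\circ(\Pi^T-T)\circ I_n$ — wait, this places $\Pi^T-T$ in the middle, so it is immediate: $(\Pi^T-T)I_n$ has weakly compact range-entries and $P_mR$ is bounded; more carefully, conjugate and apply~\ref{corollarycompositions} with $S := (\Pi^T-T)I_n$, which satisfies $P_{-\ell}S$ weakly compact for all $\ell$, and $U := P_mR$ bounded, giving weak compactness of $US = P_mR(\Pi^T-T)I_n$. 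Items~3 and~4 combine the two: in $P_m(\Pi^T-T)(\Pi^R-R)I_n$ the right factor $(\Pi^R-R)I_n$ already has $P_{-\ell}\circ(\Pi^R-R)I_n$ weakly compact for every $\ell$, so \ref{corollarycompositions} applies directly with $S := (\Pi^R-R)I_n$ and $U := P_m(\Pi^T-T)$.

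The one genuine point requiring care — and the step I expect to be the main obstacle — is the identity $P_m(\Pi^T-T)I_n = -S^T_{m,n}$, together with the bookkeeping that lets one invoke \ref{corollarycompositions}, whose statement is literally about operators on $C(K)$, in a setting where the operators go between the various $C(K_n)$. The first is not quite a tautology: by Lemma~\ref{lemmamultiplication}, for $f\in C(K_n)$ one has $\Pi^T(f)|K_m = [\phi^T_{m,n}f]\circ i_{n,m}$, while by the defining decomposition $T(f)|K_m = [\phi^T_{m,n}f]\circ i_{n,m} + S^T_{m,n}(f)$; subtracting gives $(\Pi^T-T)(f)|K_m = -S^T_{m,n}(f)$, as claimed — so this is fine, but it is the place where the uniqueness of the decomposition (Lemma~\ref{lemmaweaklycompactmultiplication}) and the exact normalization of $\Pi^T$ are used. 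For the second, I would state once and for all that conjugating any of these operators by the fixed homeomorphisms $i_{*,n}$, $i_{m,*}$, $i_{-\ell,*}$ etc. of the construction is an isometric isomorphism of operator spaces preserving weak compactness, preserving compositions, and carrying the relevant $P_{-\ell}, I_{-\ell}$ to the corresponding maps on $C(K)$ (note each $K^{-\ell}$ is a finite clopen union, homeomorphic to a clopen subset of $K$ after fixing identifications), so that \ref{corollarycompositions} becomes applicable verbatim. Once that dictionary is set up, each of the six items is a one-line instance as sketched.
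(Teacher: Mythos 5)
Your proposal is correct and takes essentially the same route as the paper: one identifies $P_m(\Pi^T-T)I_l$ with $-S^T_{m,l}$, observes that consequently $[P_m(\Pi^T-T)]I_{-k}$ and $P_{-k}[(\Pi^T-T)I_n]$ (and likewise for $R$) are finite sums of weakly compact operators, and then invokes Corollary~\ref{corollarycompositions} for each of the six compositions. The paper's proof is exactly this, written more tersely and without spelling out the (harmless) conjugations by the fixed homeomorphisms or the sign.
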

\begin{proof} 
Note that for each $k\in\N$ the operators
$$[P_{m}(\Pi^T-T)]I_{-k}= \sum_{l\leq k} S^T_{m, l}$$
$$[P_{m}(\Pi^R-R)]I_{-k}= \sum_{l\leq k} S^R_{m, l}$$
$$P_{-k} [(\Pi^T-T) I_{n}] = \sum_{l\leq k} S^T_{l, n}$$
$$P_{-k} [(\Pi^R-R) I_{n}] = \sum_{l\leq k} S^R_{l, n}$$
are all weakly compact because the right hand sides are finite sums of weakly compact
operators.
So by \ref{corollarycompositions} we may conclude the lemma.
\end{proof}

\begin{remark}\label{remarkmultiplications} If $\Lambda: C(K)\rightarrow X^+$ and $\Theta:X^+\rightarrow C(K)$
are as in \ref{definitionlambdatheta}
then they  decompose  so that
all $\phi_{n,*}^\Lambda$ and $\phi_{*,m}^\Theta$ are zero. This proves that
the approximation of a single operator $T$ by $\Pi^T$ is not always modulo a weakly
compact operator.
\end{remark}

\begin{corollary} Let 
Suppose that $T, R:X_+\rightarrow X_+$ are such operators that
$R\circ T=0_{C(K_{1,0})}\oplus Id_{Y_+}$ and $T\circ R=Id_{X_+}$. Then 
we have $\Pi^R\Pi^T=0_{C(K_{1,0})}\oplus Id_{Y_0}$and $\Pi^T\Pi^R=Id_{X_0}$
\end{corollary}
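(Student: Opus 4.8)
The goal is to deduce the identities $\Pi^R\Pi^T=0_{C(K_{1,0})}\oplus Id_{Y_0}$ and $\Pi^T\Pi^R=Id_{X_0}$ from the corresponding identities for $R\circ T$ and $T\circ R$ on $X_+$. The natural approach is to compare, entry by entry, the multiplication matrices of the compositions. First I would observe that for any two operators $U,V:X_+\to X_+$, one has $\Pi^{U\circ V}=\Pi^U\circ\Pi^V$ as operators on $X_0$; this is because, restricting to each $C(K_n)$ and projecting onto each $C(K_m)$, Lemma \ref{lemmaweaklycompactreminders} (applied with $T,R$ replaced by $U,V$) shows that $P_m U V I_n$ and $P_m\Pi^U\Pi^V I_n$ differ by a weakly compact operator, while both the genuine matrix coefficient $\phi^{U\circ V}_{m,n}$ and the composed coefficient $\sum_k\phi^U_{m,k}(\phi^V_{k,n}\circ i_{k,m})\circ\dots$ arise as the (unique, by Lemma \ref{lemmaweaklycompactmultiplication}) continuous-function part of these operators, hence agree.

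More precisely, I would proceed as follows. Fix $m,n\in\N$. On $C(K_n)$ the operator $R\circ T$ decomposes as $(R\circ T)(f)|K_m=[\phi^{R\circ T}_{m,n}f]\circ i_{n,m}+S^{R\circ T}_{m,n}(f)$. On the other hand, writing $R\circ T=\Pi^R\Pi^T+(R-\Pi^R)\Pi^T+\Pi^R(T-\Pi^T)+(R-\Pi^R)(T-\Pi^T)$ is not quite the right split; instead I would use $R\circ T=\Pi^R T+(R-\Pi^R)T$ and then $\Pi^R T=\Pi^R\Pi^T+\Pi^R(T-\Pi^T)$, so that $R\circ T=\Pi^R\Pi^T+\Pi^R(T-\Pi^T)+(R-\Pi^R)T$. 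Composing with $I_n$ on the right and $P_m$ on the left, Lemma \ref{lemmaweaklycompactreminders} tells us the last two terms are weakly compact from $C(K_n)$ to $C(K_m)$. Meanwhile $P_m\Pi^R\Pi^T I_n$, by the definition of $\Pi$ in Lemma \ref{lemmamultiplication} and uniform convergence (Lemma \ref{lemmaseriesmultiplication}), is exactly multiplication by the continuous function $\sum_{k}\bigl(\phi^R_{m,k}\,(\phi^T_{k,n}\circ i_{n,k})\bigr)\circ i_{k,m}$ composed with $i_{n,m}$ — in particular it has no weakly compact part. By uniqueness of the decomposition $fId+S$ (Lemma \ref{lemmaweaklycompactmultiplication}), comparing with the decomposition of $R\circ T=0_{C(K_{1,0})}\oplus Id_{Y_+}$ forces the continuous-function parts to coincide: $\phi^{R\circ T}_{m,n}$ equals that series. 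But the operator $0_{C(K_{1,0})}\oplus Id_{Y_+}$ has an obvious matrix — its $(m,n)$ coefficient is $\chi_{K_{2,0}}$ (as a function on $K_n=K_0$, under the homeomorphism identifications) if $m=n=0$, is $1$ (the constant function) if $m=n\neq 0$, and is $0$ if $m\neq n$, with all $S_{m,n}=0$. Hence the matrix of $\Pi^R\Pi^T$ agrees coefficientwise with the matrix of $0_{C(K_{1,0})}\oplus Id_{Y_0}$ on $X_0$, and since an operator on $X_0$ is determined by its matrix of multiplications (the decomposition is unique, and on $X_0$ the $\Pi$-type operator literally \emph{is} the matrix action by Lemma \ref{lemmamultiplication}), we conclude $\Pi^R\Pi^T=0_{C(K_{1,0})}\oplus Id_{Y_0}$. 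The identity $\Pi^T\Pi^R=Id_{X_0}$ is proved identically, starting from $T\circ R=Id_{X_+}$, whose matrix is the identity matrix.

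The step I expect to be the main obstacle is verifying cleanly that $P_m\Pi^R\Pi^T I_n$ genuinely has trivial weakly compact summand, i.e. that $\Pi^R\Pi^T$ restricted to $C(K_n)$ and followed by $P_m$ is honest multiplication by a continuous function with no strictly singular remainder. This requires combining the uniform convergence of $\sum_k\phi^R_{m,k}(\cdot)\circ i_{k,m}$ on $C(K_n)$-valued inputs (Lemma \ref{lemmaseriesmultiplication}) with the fact that each finite partial sum $\sum_{k\le N}$ is a finite sum of multiplications composed with homeomorphisms, and passing to the limit while staying inside the class of multiplication operators; the uniform-norm bound $\|g\|\le\|T\|\|f\|$ from Lemma \ref{lemmaseriesmultiplication} is what keeps this in check. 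Everything else is bookkeeping with the known uniqueness of the $fId+S$ decomposition and the previously established weak compactness of all the cross terms in Lemma \ref{lemmaweaklycompactreminders}.
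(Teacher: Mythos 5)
Your proposal is correct and follows essentially the same route as the paper: expand the composition so that all cross terms are weakly compact by Lemma \ref{lemmaweaklycompactreminders}, observe that $P_m\Pi^R\Pi^T I_n$ is a genuine multiplication operator (via the uniform convergence of the product series), and invoke the uniqueness of the $fId+S$ decomposition from Lemma \ref{lemmaweaklycompactmultiplication} to kill the weakly compact remainders and match matrices entrywise. The only cosmetic difference is your three-term split $R\circ T=\Pi^R\Pi^T+\Pi^R(T-\Pi^T)+(R-\Pi^R)T$ versus the paper's four-term one; the term $P_m\Pi^R(T-\Pi^T)I_n$ is not literally listed in Lemma \ref{lemmaweaklycompactreminders} but is the difference of two listed terms, so nothing is lost.
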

\begin{proof}

Note that we have 
$$\Pi^T\Pi^R=(\Pi^T-T+T)(\Pi^R-R+R)=$$
$$(\Pi^T-T) (\Pi^R-R)+(\Pi^T-T)R+T(\Pi^R-R)+TR$$
and 
$$\Pi^R\Pi^T =(\Pi^R-R+R)(\Pi^T-T+T) =$$
 $$=(\Pi^R-R)(\Pi^T-T) + (\Pi^R-R)T  +R(\Pi^T-T) +RT$$
So by Lemma \ref{lemmaweaklycompactreminders}
we have $$P_m\Pi^T\Pi^R I_n=S_{m,n}'+ P_mI_n$$
for all $m,n\in\N$ where $S_{m,n}'$s are all weakly compact and 
$$P_m\Pi^R\Pi^T I_n={S''}_{m,n}+ P_mI_n$$
for all $n,m\in \N\setminus\{0\}$ where ${S''}_{m,n}$ are all weakly compact.
Moreover 
 $$P_0\Pi^R\Pi^T I_0={S''}_{0,0}+ P_0 0_{C(K_{1,0})}\oplus 
Id_{Y_+}I_0={S''}_{0,0}+0_{C(K_{1,0})}\oplus Id_{C(K_{2,0})}$$

Of course $P_mI_n$ is zero or the identity operator depending whether $m=n$
but in both cases it is a multiplication by a continuous function, 
also multiplying by $0$ on $K_{1,0}$ and by $1$ on $K_{2,0}$ is
a multiplication by a continuous function, so
by \ref{lemmaweaklycompactmultiplication} 
all the operators $S_{m,n}'$  and ${S''}_{m,n}$ are  zero operators concluding the lemma.

\end{proof}

\begin{corollary}\label{corollaryzeroone} Suppose that $T, R:X_+\rightarrow X_+$ are such operators that
$R\circ T=0_{C(K_{1,0})}\oplus Id_{Y_+}$ and $T\circ R=Id_{X_+}$. Then 
we have the following:
\begin{itemize}
\item $\sum_{k\in\N} (\phi^R_{m,k}\circ i_{k,n})\phi_{k,n}^T(x)=0$ for distinct $n,m\in \N$ and
all $x\in K_n$,
 \item $\sum_{k\in\N} (\phi^R_{n,k}\circ i_{k,n})\phi_{k,n}^T(x)=1$ for all 
$(n,x)\in (\N\setminus\{0\}\times K_+\setminus K_{0}) \cup (\{0\}\times K_{2,0})$,
 \item $\sum_{k\in\N} (\phi^R_{0,k}\circ i_{k,0})\phi_{k,0}^T(x)=0$ for all $x\in K_{1,0}$,

\item $\sum_{k\in\N} (\phi^T_{m,k}\circ i_{k,n})\phi_{k,n}^R(x)=0$ for distinct $n,m\in \N$ and
all $x\in K_n$,

\item $\sum_{k\in\N} (\phi^T_{n,k}\circ i_{k,n})\phi_{k,n}^R(x)=1$ for all $n\in\N$ and all $x\in K_n$.

\end{itemize}

\end{corollary}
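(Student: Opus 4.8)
The plan is to deduce Corollary~\ref{corollaryzeroone} as a direct pointwise translation of the operator identities $\Pi^R\Pi^T=0_{C(K_{1,0})}\oplus Id_{Y_0}$ and $\Pi^T\Pi^R=Id_{X_0}$ established in the preceding corollary. The key observation is that by Lemma~\ref{lemmamultiplication}, for $f\in C(K_n)$ the value of $\Pi^T(f)$ on $K_m$ is $[\phi^T_{m,n}f]\circ i_{n,m}$, and hence for the composition $\Pi^R\Pi^T$ applied to $f\in C(K_n)$, one computes $P_m\Pi^R\Pi^T I_n(f) = \sum_{k\in\N}[\phi^R_{m,k}(\phi^T_{k,n}f\circ i_{n,k})]\circ i_{k,m}$, where the series converges uniformly by Lemma~\ref{lemmaseriesmultiplication}. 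Rewriting the inner composition of homeomorphisms using $i_{k,m}=i_{k,n}\circ i_{n,m}$-type identities (the maps $i_{m,n}$ were chosen to be compatible homeomorphisms), this becomes $\big[\sum_{k\in\N}(\phi^R_{m,k}\circ i_{k,n})\,\phi^T_{k,n}\cdot f\big]\circ i_{n,m}$ evaluated appropriately. So the multiplier of the matrix entry $P_m\Pi^R\Pi^T I_n$, which by the previous corollary equals $P_mI_n$ (or its variant on $K_{2,0}$, resp. $0$ on $K_{1,0}$), is precisely the continuous function $\sum_{k\in\N}(\phi^R_{m,k}\circ i_{k,n})\,\phi^T_{k,n}$.

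The argument then proceeds case by case. First I would spell out what $P_mI_n$ is as a multiplication operator: it is $0$ if $m\neq n$, and the identity (multiplication by the constant function $1$) if $m=n$, except for the special $(0,0)$ entry where by the preceding corollary $P_0\Pi^R\Pi^T I_0 = 0_{C(K_{1,0})}\oplus Id_{C(K_{2,0})}$, i.e.\ multiplication by the function that is $0$ on $K_{1,0}$ and $1$ on $K_{2,0}$. Matching multipliers via the uniqueness of the decomposition (Lemma~\ref{lemmaweaklycompactmultiplication}, which also gave that the weakly compact remainders vanish) yields: $\sum_{k}(\phi^R_{m,k}\circ i_{k,n})\phi^T_{k,n}(x)=0$ for $m\neq n$ and all $x\in K_n$; $\sum_{k}(\phi^R_{0,k}\circ i_{k,0})\phi^T_{k,0}(x)=0$ for $x\in K_{1,0}$ and $=1$ for $x\in K_{2,0}$; and for $n\geq 1$, $\sum_{k}(\phi^R_{n,k}\circ i_{k,n})\phi^T_{k,n}(x)=1$ for all $x\in K_n$. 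I should note that the set $(\N\setminus\{0\}\times K_+\setminus K_0)\cup(\{0\}\times K_{2,0})$ appearing in the statement is just the union $\bigcup_{n\geq 1}(\{n\}\times K_n)$ together with $\{0\}\times K_{2,0}$, so these three bullets cover exactly that. The remaining two bullets follow identically from $\Pi^T\Pi^R=Id_{X_0}$, which has no exceptional $(0,0)$ behaviour, giving $\sum_{k}(\phi^T_{m,k}\circ i_{k,n})\phi^R_{k,n}(x)=0$ for $m\neq n$ and $=1$ for $m=n$, all $x\in K_n$.

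The one point requiring genuine care — and I expect this to be the main technical obstacle — is verifying that the composition of the two ``matrices of multiplications'' really does have the claimed matrix entries, i.e.\ that $P_m\Pi^R\Pi^T I_n = \sum_k (P_m\Pi^R I_k)(P_k\Pi^T I_n)$ with the series converging in the right sense and the homeomorphisms $i_{\cdot,\cdot}$ bookkept correctly. This amounts to interchanging a uniformly convergent series with composition and checking the cocycle-type compatibility of the fixed homeomorphisms $i_{m,n}$; it is routine given Lemmas~\ref{lemmaseriesconverges} and~\ref{lemmaseriesmultiplication}, but it is where all the notation must be handled honestly. Once that is in place, everything else is reading off multipliers and invoking Lemma~\ref{lemmaweaklycompactmultiplication} to discard the (already known to vanish) weakly compact parts, so the proof is short. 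I would present it by first recording the matrix-entry formula for $\Pi^R\Pi^T$ and $\Pi^T\Pi^R$, then quoting the previous corollary, then listing the five resulting pointwise identities.
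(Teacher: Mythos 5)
Your proposal is correct and is exactly the route the paper intends: the paper in fact gives no explicit proof of this corollary, treating it as an immediate consequence of the preceding corollary's identities $\Pi^R\Pi^T=0_{C(K_{1,0})}\oplus Id_{Y_0}$ and $\Pi^T\Pi^R=Id_{X_0}$, read off entrywise by applying them to $I_n(\vec 1_n)$ and matching multipliers via the uniqueness from Lemma~\ref{lemmaweaklycompactmultiplication}. The interchange you flag as the main obstacle is in fact already built into the definition of $\Pi^R$ on $X_0$ in Lemma~\ref{lemmamultiplication} (applied to the element $(\,[\phi^T_{k,n}f]\circ i_{n,k}\,)_k\in X_0$), so only the cocycle identity $i_{n,k}\circ i_{k,m}=i_{n,m}$ needs checking.
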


\begin{lemma}\label{lemmamultiplicationpointwise}
Let $x\in K$ and $x_n=i_{n,*}(x)$
and $A=\{x_n:n\in \N\}$.
Let $T:X_+\rightarrow X_+$ be a bounded linear operator.
There is a bounded linear  operator $\Pi^T_{x}: c_0(A)\rightarrow c_0(A)$  which satisfies
$$\Pi^T_{x}(1_{x_n})(x_m)= \phi^T_{m,n}(x_n),$$
and whose norm is not bigger than $||T||$.
\end{lemma}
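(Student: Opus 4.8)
The plan is to show that the candidate operator, defined on the finitely supported elements of $c_0(A)$ by the prescribed values on the basis vectors, is bounded with norm at most $\|T\|$; boundedness then lets us extend it uniquely to all of $c_0(A)$. So first I would fix a point $x\in K$, set $x_n=i_{n,*}(x)$, and for a finitely supported $\xi=(\xi_n)_{n\in F}\in c_0(A)$ (so $F\subseteq\N$ finite) define $\Pi^T_x(\xi)$ by the formula $\Pi^T_x(\xi)(x_m)=\sum_{n\in F}\phi^T_{m,n}(x_n)\xi_n$. Note that for a fixed finite $F$ and arbitrary $m$, the sum is finite, so this is well-defined pointwise; the nontrivial points are (a) that the resulting sequence $(\Pi^T_x(\xi)(x_m))_{m\in\N}$ lies in $c_0$, and (b) the norm bound.

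For (a), I would invoke Lemma \ref{lemmasequenceconverges}: for each fixed $n\in F$ the sequence $(\phi^T_{m,n})_{m\in\N}$ converges to $0$ uniformly on $K_n$, in particular $\phi^T_{m,n}(x_n)\to 0$ as $m\to\infty$; summing the finitely many $n\in F$ gives $\Pi^T_x(\xi)(x_m)\to 0$, so $\Pi^T_x(\xi)\in c_0(A)$. For (b), the key is to manufacture, for each finite $F$, a genuine element $f\in C(\bigcup_{n\in F}K_n)\subseteq X_+$ of norm $\|\xi\|_{c_0}$ whose image under $T$ has norm controlling $\|\Pi^T_x(\xi)\|$. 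Here I would use that $K$ is perfect (Lemma \ref{lemmaexistence}(1) with the perfectness added in its proof) together with the technique of Lemma \ref{lemmablocks}: one can choose, near $x_n$ inside $K_n$, small clopen sets on which $\phi^T_{m,n}$ barely changes, pick signs matching the signs of the $\phi^T_{m,n}(x_n)$, and combine them (on both $K_{1,n}$ and $K_{2,n}$ via the automorphism $j_n$, to keep the function $(C_n)$-simple and hence in $X_+$) into a single $f$ of norm $\|\xi\|_{c_0}$ for which $\sum_{n\in F}\phi^T_{m,n}(f|K_n)(i_{n,m}(\,\cdot\,))$ is $\varepsilon$-close to $\sum_{n\in F}\phi^T_{m,n}(x_n)\xi_n$ at a suitable point of $K_m$, while the weakly compact error terms $S^T_{m,n}$ contribute less than $\varepsilon$ by Theorem \ref{theoremdiesteluhl}. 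This yields $\|\Pi^T_x(\xi)(x_m)\|\le \|T(f)|K_m\|+\varepsilon\le \|T\|\,\|\xi\|_{c_0}+\varepsilon$ for every $m$ and every $\varepsilon>0$, hence $\|\Pi^T_x(\xi)\|\le\|T\|\,\|\xi\|_{c_0}$.

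Having the norm bound on the dense subspace of finitely supported vectors, the operator extends uniquely by continuity to a bounded operator $\Pi^T_x:c_0(A)\to c_0(A)$ of norm at most $\|T\|$, and it satisfies $\Pi^T_x(1_{x_n})(x_m)=\phi^T_{m,n}(x_n)$ by construction. I expect the main obstacle to be step (b): one must be careful that the clopen sets chosen near the different points $x_n$ (for $n\in F$) can be taken pairwise disjoint and small enough that a single choice simultaneously approximates all coordinates $m$ in the relevant finite range while the weakly compact remainders stay small — this is exactly the bookkeeping already carried out in Lemmas \ref{lemmablocks} and \ref{lemmaseriesmultiplication}, so the argument is a pointwise localization of those, but the uniformity in $m$ needs the convergence supplied by Lemma \ref{lemmasequenceconverges} and the finiteness of $F$.
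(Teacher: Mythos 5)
Your argument is correct, but it takes a genuinely different and much longer route than the paper. The paper derives the lemma in two lines from Lemma \ref{lemmamultiplication}: the map $\xi\mapsto f=(\xi_n\vec{1}_n)_{n}$ is an isometric embedding of $c_0(A)$ into $X_0$ (the restrictions are constant, so $d_n(f_n)=0$), one applies $\Pi^T$, which has norm at most $\|T\|$ and range in $X_0$, and then evaluates at the points $x_m$; since $\Pi^T(f)\in X_0$ the resulting sequence is in $c_0$, and its supremum is dominated by $\|\Pi^T(f)\|\leq\|T\|\,\|\xi\|$. You instead reprove both halves from scratch: the $c_0$-range via Lemma \ref{lemmasequenceconverges} (fine, for finitely supported $\xi$), and the norm bound via a pointwise rerun of the Lemma \ref{lemmablocks} construction. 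That second step does work, but note two things. First, your phrase that the remainders $S^T_{m,n}$ ``contribute less than $\varepsilon$ by Theorem \ref{theoremdiesteluhl}'' is not literally a property of a single test function: you must build a \emph{pairwise disjoint sequence} of candidates $f^{(l)}$ (supported on disjoint clopen $V_l$ inside a neighbourhood $W$ of $x_m$ chosen so that each $\phi_{m,n}$, $n\in F$, varies by less than $\varepsilon'$ on $i_{n,m}[W]$) and pass to a late term where $\|\sum_{n\in F}S_{m,n}(f^{(l)}_n)\|$ is small --- exactly the disjointness trick of Lemma \ref{lemmablocks}; you defer to that bookkeeping, so this is acceptable. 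Second, your symmetrization through $j_n$ to keep the test functions $(C_n)$-simple is unnecessary here: any function supported on finitely many $K_n$'s already lies in $X_0\subseteq X_+$. In short, your proof buys nothing over the paper's, which is simply the observation that $\Pi^T_x$ is the compression of the already-bounded $\Pi^T$ to the copy of $c_0(A)$ sitting inside $X_0$ as locally constant functions; but it is a valid, self-contained alternative that only uses Lemmas \ref{lemmasequenceconverges} and \ref{lemmablocks}-type estimates rather than the full strength of Lemma \ref{lemmamultiplication}.
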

\begin{proof} Note that if ${\vec{1}}_n$ denotes the constant function
on $K_n$ whose value is one, then $\Pi^T({\vec 1}_n)(x_m)=\phi^T_{m,n}(x_n)$.
Since the range of $\Pi^T$ is $X_0$ we may conclude that $(\phi^T_{m,n}(x_n))_{m\in \N}$
belongs to $c_0(A)$ and its norm is not bigger than $||T||$ by \ref{lemmamultiplication}. So the above formula
 well defines $\Pi^T_{x}: c_0(A)\rightarrow c_0(A)$.
\end{proof}

\begin{lemma}\label{lemmaisomorphismpointwise}
Suppose that $T, R:X_+\rightarrow X_+$ are such operators that
$R\circ T=0_{C(K_{1,0})}\oplus Id_{Y_+}$
 and $T\circ R=Id_{X_+}$. Let $x\in K_{1,*}$, $y\in K_{2,*}$, $x_n=i_{n,*}(x)$
$y_n=i_{n,*}(y)$, $A=\{x_n:n\in \N\}$, $A'=\{x_n:n\in \N\setminus\{0\}\}$, $B=\{y_n:n\in \N\}$.  
Let 
$$\Pi_{x,y}^T= (\Pi_{x}^T, \Pi_{y}^T): c_0(A\cup B)\rightarrow c_0(A\cup B),$$
$$\Pi_{x,y}^R= (\Pi_{x}^R, \Pi_{y}^R): c_0(A\cup B)\rightarrow c_0(A\cup B).$$
Then $\Pi^R_{x,y} \circ\Pi^T_{x,y} =
0_{x_0}\oplus Id_{c_0(A'\cup B)}$ and $\Pi^T_{x,y} \circ \Pi^R_{x,y} =Id_{c_0(A\cup B)}$. 

\begin{proof}
Apply corollary \ref{corollaryzeroone} pointwise.
\end{proof}

\end{lemma}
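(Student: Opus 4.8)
The plan is to unpack the block structure of the two operators, reduce both identities to evaluating a single type of scalar series on the standard basis vectors, and then identify that series with the ones whose values are recorded in Corollary~\ref{corollaryzeroone}. First I would note that, since $A\cap B=\emptyset$, we have the isometric splitting $c_0(A\cup B)=c_0(A)\oplus c_0(B)$ and that, by definition, $\Pi^T_{x,y}=(\Pi^T_x,\Pi^T_y)$ and $\Pi^R_{x,y}=(\Pi^R_x,\Pi^R_y)$ are exactly the corresponding block-diagonal operators; hence $\Pi^R_{x,y}\circ\Pi^T_{x,y}=(\Pi^R_x\circ\Pi^T_x,\ \Pi^R_y\circ\Pi^T_y)$ and similarly for $\Pi^T_{x,y}\circ\Pi^R_{x,y}$. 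So it suffices to establish the four identities $\Pi^R_x\circ\Pi^T_x=0_{x_0}\oplus Id_{c_0(A')}$, $\Pi^R_y\circ\Pi^T_y=Id_{c_0(B)}$, $\Pi^T_x\circ\Pi^R_x=Id_{c_0(A)}$ and $\Pi^T_y\circ\Pi^R_y=Id_{c_0(B)}$ separately, and for each it is enough to check it on the unit vectors $1_{x_n}$ (resp.\ $1_{y_n}$) evaluated at a coordinate $x_m$ (resp.\ $y_m$).

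For the computation I would use Lemma~\ref{lemmamultiplicationpointwise} to write $\Pi^T_x(1_{x_n})=\sum_{l\in\N}\phi^T_{l,n}(x_n)\,1_{x_l}$, a convergent series in $c_0(A)$, and then apply the bounded operator $\Pi^R_x$ term by term, obtaining
$$\bigl(\Pi^R_x\circ\Pi^T_x\bigr)(1_{x_n})(x_m)=\sum_{k\in\N}\phi^R_{m,k}(x_k)\,\phi^T_{k,n}(x_n).$$
The key point is that the homeomorphisms are chosen compatibly, so $i_{k,n}(x_n)=i_{k,n}(i_{n,*}(x))=i_{k,*}(x)=x_k$, whence $\phi^R_{m,k}(x_k)=(\phi^R_{m,k}\circ i_{k,n})(x_n)$ and the right-hand side is precisely $\sum_{k\in\N}(\phi^R_{m,k}\circ i_{k,n})(x_n)\,\phi^T_{k,n}(x_n)$ --- the series of Corollary~\ref{corollaryzeroone} evaluated at the point $x_n\in K_n$. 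The same computation handles $\Pi^T_x\circ\Pi^R_x$ (interchange $T$ and $R$) and the $y$-column (replace $x$ by $y$, using $x_n\in K_{1,n}$ and $y_n\in K_{2,n}$).

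It then remains to substitute the values supplied by Corollary~\ref{corollaryzeroone}, keeping track of which clopen piece the evaluation point lies in. For $\Pi^R_x\circ\Pi^T_x$: the first clause gives $0$ off the diagonal; the second clause gives $1$ on the diagonal when $n\geq 1$, since $x_n\in K_{1,n}\subseteq K_+\setminus K_0$; the third clause gives $0$ on the diagonal when $n=0$, since $x_0\in K_{1,0}$; hence $\Pi^R_x\circ\Pi^T_x=0_{x_0}\oplus Id_{c_0(A')}$. For the $y$-column the only change is $y_0\in K_{2,0}$, so the second clause applies at $n=0$ as well and $\Pi^R_y\circ\Pi^T_y=Id_{c_0(B)}$. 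For $\Pi^T_x\circ\Pi^R_x$ and $\Pi^T_y\circ\Pi^R_y$ the fourth and fifth clauses give $0$ off the diagonal and $1$ on the diagonal for every $n\in\N$, so both are the identity. Reassembling the blocks gives $\Pi^R_{x,y}\circ\Pi^T_{x,y}=0_{x_0}\oplus Id_{c_0(A'\cup B)}$ and $\Pi^T_{x,y}\circ\Pi^R_{x,y}=Id_{c_0(A\cup B)}$.

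There is no serious obstacle here, since all the real content is already packaged in Corollary~\ref{corollaryzeroone}; the only point that needs a word of care is the term-by-term application of $\Pi^R_x$ to the series defining $\Pi^T_x(1_{x_n})$ and, equivalently, the absolute convergence of $\sum_{k}\phi^R_{m,k}(x_k)\phi^T_{k,n}(x_n)$. This follows from the boundedness of the pointwise operators together with the membership in $c_0$ guaranteed by Lemma~\ref{lemmamultiplicationpointwise}, or directly from the uniform convergence established in Lemmas~\ref{lemmaseriesconverges} and~\ref{lemmaseriesmultiplication}.
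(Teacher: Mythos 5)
Your proof is correct and is exactly the paper's argument: the paper's proof is the single line ``apply Corollary \ref{corollaryzeroone} pointwise,'' and your computation (block-diagonal splitting, term-by-term application of $\Pi^R_x$ to the $c_0$-convergent series $\Pi^T_x(1_{x_n})=\sum_k\phi^T_{k,n}(x_n)1_{x_k}$, the identification $\phi^R_{m,k}(x_k)=(\phi^R_{m,k}\circ i_{k,n})(x_n)$ via compatibility of the $i$'s, and the case check on which clopen piece $x_0$, $y_0$, $x_n$ lie in) is precisely the intended unpacking of that line.
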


\vskip 26pt

\section{Detecting shifts in isomorphisms.}
\vskip 13pt
\noindent If $I$ is a set and $f\in c_0(I)$, by support of
$f$ we mean $\{i: f(i)\not=0\}$. 
\begin{lemma}\label{lemmagowers}
Let $A=\{x_n:n\in N\}$ and $B=\{y_n:n\in N\}$.
Suppose that $T:c_0(A\cup B)\rightarrow c_0(A\cup B)$ is an operator
such that $T(1_{x_0})=0$,
such that $T|(\{f\in c_0(A): f(0)=0\})$ is an isomorphism onto $c_0(A)$ 
and $T_2=T|(c_0(B)$ is an isomorphism onto $c_0(B)$. Then there are
$f_k \in c_0(A\cup B)$ of pairwise disjoint and finite supports and there is an $\varepsilon>0$ for which 
$$f_k(x_n)=f_k(y_n)$$ 
for every $n\in N$ and for some distinct $m_k$s we have
$$|T(f_k)(x_{m_k})-T(f_k)(y_{m_k})|>\varepsilon.$$
\end{lemma}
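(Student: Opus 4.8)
The statement is a ``$c_0$-analogue'' of the classical Pe\l czy\'nski--Gowers observation that a bounded operator which simultaneously acts like an isomorphism on two complementary $c_0$-parts of $c_0(A\cup B)$ must ``move mass'' between the $A$-side and the $B$-side infinitely often, and in a way that cannot be repaired by a diagonal adjustment. The key point is to build the $f_k$ inductively, at stage $k$ choosing $f_k$ supported on finitely many indices \emph{above} a level $N_k$ that has already been used up, and so that $f_k$ has equal values on the symmetric pair $x_n,y_n$ for every $n$. Because $T(1_{x_0})=0$, we may always normalise $f_k$ so that $f_k(0)=0$ without affecting $T(f_k)$, hence by hypothesis $T$ restricted to $\{f\in c_0(A):f(0)=0\}$ and to $c_0(B)$ are isomorphisms; let $c>0$ be a common lower bound for these two isomorphisms.

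\textbf{Main steps.} First I would fix the constant: put $c>0$ so that $\|T g\|\ge c\|g\|$ whenever $g\in\{f\in c_0(A):f(0)=0\}$ or $g\in c_0(B)$, and set $\varepsilon=c/3$ (the precise fraction is immaterial). Second, I would argue that the ``diagonal'' of $T$ cannot stay uniformly bounded below on both sides simultaneously in a compatible way. Concretely: suppose toward the conclusion that the desired $\varepsilon$ fails, i.e. that eventually $|T(f)(x_m)-T(f)(y_m)|$ is small (relative to $c\|f\|$) for \emph{all} $m$ and all finitely-supported symmetric $f$. Consider a symmetric ``block'' $f$ supported on $\{n:N\le n<N'\}$ with $f(x_n)=f(y_n)$. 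Write $f=g_A+g_B$ where $g_A\in c_0(A)$, $g_B\in c_0(B)$ are the two halves (and note $g_A(0)=0$ for $N\ge1$). Then $\|Tg_A\|,\|Tg_B\|\ge c\|f\|$, so there are coordinates where $Tg_A$ and $Tg_B$ are each large; but $T f=Tg_A+Tg_B$, and the hypothesis on $T(1_{x_0})$ together with the smallness assumption would force $Tg_A$ and $Tg_B$ to nearly cancel on every coordinate of the form $x_m$ versus $y_m$, i.e. $Tg_A$ and $Tg_B$ would have to be ``symmetric reflections'' of each other up to a small error. Since $Tg_A$ and $Tg_B$ both lie in $c_0(A\cup B)$ and have norm $\ge c\|f\|$, this forces $\|Tf\|\approx 2\cdot(\text{something})$ on the symmetric part, which I would push to a contradiction with $\|T\|<\infty$ by iterating over infinitely many disjoint blocks and taking a suitable sum (the sum of the normalised blocks is again in $c_0(A\cup B)$ since supports are disjoint and go to infinity, so $\|T(\sum)\|\le\|T\|$, yet each summand contributes a fixed amount of non-cancelled mass at distinct coordinates).

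\textbf{Carrying out the induction.} More carefully, I would build $f_k$ and $m_k$ together by induction on $k$. At stage $k$, having used indices below $N_k$, I pick a fresh symmetric unit vector $f_k$ supported on finitely many indices $\ge N_k$ with $f_k(0)=0$; apply the lower bound to its $A$-half and $B$-half to locate a coordinate, say $x_{m_k}$ (or $y_{m_k}$, symmetric case), where $|T(f_k)(x_{m_k})|\ge c/2$ say, while the partner value $|T(f_k)(y_{m_k})|$ is controlled; if the two are close, I argue the $B$-half must then contribute its own large coordinate at some $y_{m_k'}$ whose partner $x_{m_k'}$-value is small (otherwise $Tg_B$ would be a near-reflection of a function and this can only happen for finitely many blocks before violating $\|T\|<\infty$, by the disjoint-support summing argument). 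Choosing $N_{k+1}$ larger than all indices appearing in $f_k$, in $T(f_k)$'s relevant large coordinates, and large enough that the tail effects from earlier blocks are negligible, keeps the $m_k$ distinct and makes the supports of the $f_k$ pairwise disjoint. The only real subtlety — and the step I expect to be the main obstacle — is the bookkeeping that guarantees one can always find the ``bump'' coordinate $m_k$ \emph{without} it being absorbed by the part of $T(f_k)$ coming from the other half; this is exactly where one uses that $T$ is a single bounded operator (finite norm) to prevent the $A$-image and $B$-image of a symmetric block from being forced into near-perfect cancellation infinitely often, together with the normalisation trick $T(1_{x_0})=0$ which lets us ignore the coordinate $0$ throughout.
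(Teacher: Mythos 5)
Your overall strategy --- build symmetric blocks $f_k$ and argue that the images of the $A$-half and the $B$-half cannot nearly cancel (or nearly coincide) on symmetric pairs infinitely often ``because $\|T\|<\infty$'' --- has a genuine gap at exactly the step you yourself flag as the main obstacle, and the gap is not repairable by bookkeeping. In the sup norm of $c_0(A\cup B)$, if $Tg_A$ and $Tg_B$ are near-perfect reflections of each other on every symmetric pair, then $\|Tf\|=\|Tg_A+Tg_B\|\le 2\|T\|\,\|f\|$, and summing infinitely many disjoint normalised blocks still gives a vector of norm $1$ whose image has norm at most $\|T\|$; nothing blows up and no contradiction with boundedness appears. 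Indeed, an operator of the form $S\oplus S$ (the same $S$ acting on both halves) produces \emph{exact} reflections on every symmetric block while being perfectly bounded, so boundedness alone can never rule out the ``bad'' scenario. What actually rules it out is a parity obstruction that your argument never touches: the restriction of $T$ to the $A$-side is onto $c_0(A)$ with a one-dimensional kernel (Fredholm index $1$), while $T_2$ on the $B$-side is an isomorphism (index $0$). Your write-up treats the condition $T(1_{x_0})=0$ as a mere normalisation allowing you to ignore coordinate $0$, whereas it is the source of the whole phenomenon.

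The paper's proof makes this precise as follows. Let $T_1=T|c_0(A)$ and let $T_3:c_0(B)\rightarrow c_0(B)$ be the literal copy of $T_1$ transported to the $B$-coordinates, i.e. $T_3(1_{y_n})(y_m)=T_1(1_{x_n})(x_m)$. If $T_3-T_2$ were weakly compact, hence strictly singular by Pe\l czy\'nski's theorem, then $(T_1,T_2)$ and $(T_1,T_3)$ would have the same Fredholm index; the latter is even (twice the index of $T_1$), while the former is odd ($1+0$), a contradiction. So $T_3-T_2$ is not weakly compact, and the Diestel--Uhl characterisation yields pairwise disjoint $e_k\in c_0(B)$ and distinct $m_k$ with $|(T_3-T_2)(e_k)(y_{m_k})|>\varepsilon$. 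Setting $f_k|B=e_k$ and $f_k(x_n)=e_k(y_n)$ gives $T(f_k)(x_{m_k})=T_3(e_k)(y_{m_k})$ and $T(f_k)(y_{m_k})=T_2(e_k)(y_{m_k})$, which is exactly the required bump. Your proposal would need to be rebuilt around some version of this index argument (or another device that genuinely exploits the kernel/surjectivity mismatch between the two halves); as written, the inductive construction has no engine to produce the coordinates $m_k$.
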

\begin{proof}
Let $T_1=T|c_0(A)$.
Consider $T_3: c_0(B)\rightarrow c_0(B)$
so that we have $T_3(1_{\{y_n\}})(y_m)=T_1(1_{\{x_{n}\}})(x_m)$ for every $m,n\in N$. That is we
consider a copy of $T_1$ copied 
from $c_0(A)$ on $c_0(B)$.\par
\noindent {\bf Claim:} $T_3-T_2: c_0(B)\rightarrow c_0(B)$ is not weakly compact.\par
\noindent Proof of the Claim: If it is, then it is strictly singular
by \ref{theorempelczynski}, and so we will
be able to apply the Fredholm theory (see \cite{lindenstrausstzafriri}). Namely, then,
the Fredholm index of $T=(T_1, T_2): c_0( A\cup B)\rightarrow c_0(A\cup B)$ 
is equal to the Fredholm index of $(T_1, T_2)+(0,T_3-T_2)=(T_1,T_3)$ which must be an
even integer  since $T_1$ and
$T_3$ are copies of the same operator. However $T_2$ is an isomorphism and $T_1$ is onto
and  has kernel
of dimension one, and so $T=(T_1, T_2)$ must have odd Fredholm index,
a contradiction which completes the proof of the claim.\par
\vskip 6pt
\noindent Now use \ref{theoremdiesteluhl} to find 
a pairwise disjoint $e_k\in c_0(B)$ such that  there are $m_k$s such that
$$|[(T_3-T_2)(e_k)](y_{m_k})|>\varepsilon$$
for some $\varepsilon>0$. 
Since the operator is bounded and the norms of sums of $e_k$s are bounded as well
since there are disjoint, we may assume that all $m_k$s are distinct.

\noindent Now define $f_k\in c_0(A\cup B)$
by  $f_k|B=e_k$ and  $f_k(x_n)=e_k(y_n)$ for each
$n\in N$. So we have $f_k(x_n)=f_k(y_n)$ for each $k,n\in N$.
Also  
$$T(f_k)(x_{m_k})= T_1(f_k| A)(x_{m_k})=
T_3(e_k))(y_{m_k}),$$
$$T(f_k)(y_{m_k})= T_2(e_k))(y_{m_k})$$
so we obtain
$$|T(f_k)(x_{m_k})-T(f_k)(y_{m_k})|>\varepsilon $$
 as required.\par
\end{proof}

\begin{lemma}\label{lemmaweaklycompacttrick}
Suppose that $S:C(K_n)\rightarrow C(K_m)$ is a weakly compact operator.
Let $\tau_n:K_n\rightarrow L_n$ be the canonical surjection as in Section 3. Then
for each $j\in\N$
$$\Gamma(S,j)=\{t\in L_n: \exists x\in \tau^{-1}_n(\{t\}) \ \ 
|S^*(\delta_{i_{m,n}(x)})(\tau_n^{-1}(\{t\}))|>1/j\}$$
is finite.
\end{lemma}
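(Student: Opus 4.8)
The plan is to argue by contradiction: suppose $\Gamma(S,j)$ is infinite for some fixed $j \in \N$. The idea is to extract from $\Gamma(S,j)$ a sequence of points in $L_n$, lift them to witnessing points in $K_n$, build from these a pairwise disjoint bounded sequence of functions in $C(K_n)$ on which $S$ does not tend to zero, and thereby contradict the characterization of weak compactness in Theorem \ref{theoremdiesteluhl}. The one subtlety is that the witnessing functionals $S^*(\delta_{i_{m,n}(x)})$ are measures on $K_n$, and we need to make their masses over pairwise disjoint clopen sets stay bounded away from zero simultaneously; this is where the Rosenthal-type Corollary \ref{corollarysupremus} and property \ref{lemmaexistence}(5) (existence of suprema of infinite pairwise disjoint families of clopen sets in $A_n$) come in.

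First I would pick, for each $t$ in an infinite subset of $\Gamma(S,j)$, a point $x_t \in \tau_n^{-1}(\{t\})$ with $|S^*(\delta_{i_{m,n}(x_t)})(\tau_n^{-1}(\{t\}))| > 1/j$. Since distinct points $t \in L_n$ have disjoint fibers $\tau_n^{-1}(\{t\})$, and $L_n$ is the Stone space of $C_n$, I can find pairwise disjoint clopen sets $[c_t] \subseteq L_n$ (with $c_t \in C_n$), one around each $t$, and then $\tau_n^{-1}([c_t])$ are pairwise disjoint clopen subsets of $K_n$, each a union of $\bowtie$-classes, containing $x_t$. Writing $\mu_t = S^*(\delta_{i_{m,n}(x_t)})$, a Radon measure on $K_n$ of norm $\le \|S\|$, we have $|\mu_t|(\tau_n^{-1}([c_t]))$ is not automatically large — only $|\mu_t(\tau_n^{-1}(\{t\}))| > 1/j$ — so by regularity of $\mu_t$ I shrink $[c_t]$ further so that $|\mu_t(\tau_n^{-1}([c_t])) - \mu_t(\tau_n^{-1}(\{t\}))| < 1/(2j)$, hence $|\mu_t(\tau_n^{-1}([c_t]))| > 1/(2j)$, while keeping the $\tau_n^{-1}([c_t])$ pairwise disjoint. (Here we use that $\tau_n^{-1}(\{t\}) = \bigcap\{\tau_n^{-1}([c]) : c \in C_n,\ t \in [c]\}$, so by compactness a single $[c_t]$ suffices.)

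Next I would enumerate the chosen $t$'s as a sequence and set $a_k = \tau_n^{-1}([c_{t_k}]) \in A_n$, a pairwise disjoint clopen sequence in $K_n$. Apply Corollary \ref{corollarysupremus} to the bounded pairwise disjoint sequence $(\chi_{a_k})_k$, the functionals $\zeta_k = \mu_{t_k}$, and $\varepsilon = 1/(4j)$, together with \ref{lemmaexistence}(5), to get an infinite $M$ such that $a = \sup_{k \in M} a_k$ exists in $A_n$ and $|\mu_k(a) - \mu_k(a_k)| < 1/(4j)$ for $k \in M$, whence $|\mu_k(a)| > 1/(4j)$ for all $k \in M$. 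But $\mu_k(a) = S^*(\delta_{i_{m,n}(x_{t_k})})(\chi_a) = S(\chi_a)(i_{m,n}(x_{t_k}))$, so $\|S(\chi_a)\| > 1/(4j)$ — wait, that alone is not a contradiction, so instead I would run the argument the other way: consider $S(\chi_{a_k})$ directly. Since $(\chi_{a_k})$ is bounded and pairwise disjoint, Theorem \ref{theoremdiesteluhl} forces $\|S(\chi_{a_k})\| \to 0$; but $|S(\chi_{a_k})(i_{m,n}(x_{t_k}))| = |\mu_{t_k}(\chi_{a_k})| = |\mu_{t_k}(\tau_n^{-1}([c_{t_k}]))| > 1/(2j)$ for every $k$, a direct contradiction. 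Hence $\Gamma(S,j)$ is finite.

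The main obstacle I anticipate is purely the bookkeeping in step two: arranging the clopen sets $[c_t]$ so that they are simultaneously pairwise disjoint in $L_n$ \emph{and} small enough that $\mu_t$-mass is concentrated on $\tau_n^{-1}([c_t])$ — one must be careful that shrinking $[c_t]$ for the sake of one measure does not destroy disjointness already arranged for others, but since we process the (at most countably many selected) points one at a time and only ever shrink, and disjointness of fibers gives us room, this goes through. Everything else is a routine application of \ref{theoremdiesteluhl}, \ref{corollarysupremus}, and \ref{lemmaexistence}(5). In fact, once the disjoint $a_k$ with $|\mu_{t_k}(a_k)| > 1/(2j)$ are in hand, the contradiction with \ref{theoremdiesteluhl} is immediate and the appeal to \ref{corollarysupremus}/\ref{lemmaexistence}(5) is not even needed — it suffices that $S^*(\delta_{i_{m,n}(x_{t_k})})$ evaluated on $\chi_{a_k}$ is bounded below while $\|S(\chi_{a_k})\| \to 0$.
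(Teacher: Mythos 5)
Your proposal is correct and, once you discard the unnecessary detour through \ref{corollarysupremus} and \ref{lemmaexistence}(5) (as you yourself do in the last sentence), it is essentially the paper's own argument: pass to an infinite discrete subset of $\Gamma(S,j)$, separate the points by pairwise disjoint clopen sets $[c_t]$ with $c_t\in C_n$, use regularity together with $\tau_n^{-1}(\{t\})=\bigcap\{\tau_n^{-1}([c]):c\in C_n,\ t\in[c]\}$ to keep the mass of $S^*(\delta_{i_{m,n}(x_t)})$ on $\tau_n^{-1}([c_t])$ bounded below, and contradict Theorem \ref{theoremdiesteluhl}. The only cosmetic difference is that the paper shrinks $[c_t]$ along the directed family to preserve the bound $>1/j$ while you settle for $>1/(2j)$, and the paper makes explicit the step of first extracting a discrete sequence, which is what licenses the pairwise disjoint clopen separation you assert.
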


\begin{proof}
If $\Gamma(S,j)\subseteq L_n$ is infinite, we can choose
a discrete sequence $\{t_l:l\in\N\}\subseteq \Gamma(S,j)$. Let 
$x_l\in K_n$ be such that $\tau_n(x_l)=t_l$  and 
$|S^*(\delta_{i_{m,n}(x_l)})(\tau_n^{-1}(\{t_l\}))|>1/j$.
Now use the fact that
$$\tau^{-1}[\{t\}]=\bigcap\{\tau^{-1}([a]): t\in [a], \ a\in C_n\}$$
and the family whose intersection appears above is directed to conclude
that for each $l\in \N$ there is an $a_l\in C_n$ such that $t_l\in [a_l]$ and 
$|S^*(\delta_{i_{m,n}(x_l)})(\tau_n^{-1}[[a_l]])|>1/j$.
Moreover, as $\{t_l:l\in\N\}$ is discrete we may w.l.o.g. assume that
$a_l$'s are pairwise disjoint. This means that
$$|S(\chi_{\tau^{-1}_n[[a_l]]})(i_{m,n}(x_l))|>1/j$$
for each $l\in \N$, which contradicts \ref{theoremdiesteluhl} since
$a_l$'s are pairwise disjoint. 

\end{proof}

\begin{theorem} $Y_+$ and $X_+$ are not isomorphic.
\end{theorem}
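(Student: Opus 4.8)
The plan is to derive a contradiction from the existence of an isomorphism $R:Y_+\to X_+$. Composing with the natural inclusion and projection, one obtains operators $T,R:X_+\to X_+$ with $R\circ T = 0_{C(K_{1,0})}\oplus Id_{Y_+}$ and $T\circ R = Id_{X_+}$, exactly the hypotheses feeding the machinery of Section 5. The idea is that such operators would have to ``shift'' the part of $X_+$ living over $\bigcup_n K_{1,n}$, and this shift cannot respect the constraint $\lim_n d_n(f_n)=0$ from Lemma \ref{lemmacharacterizationxplus}, contradicting the fact that $T$ maps into $X_+$ via Corollary \ref{corollary2assymptotic}.

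Concretely, fix $x\in K_{1,*}$ and $y\in K_{2,*}$ with $x\bowtie y$ in the sense of Remark \ref{remarkbowtie} (that is, $S(j)(x\cap B)=y\cap B$, so $\tau(x)=\tau(y)$), and pass to the copies $x_n=i_{n,*}(x)$, $y_n=i_{n,*}(y)$. By Lemmas \ref{lemmamultiplicationpointwise} and \ref{lemmaisomorphismpointwise} we get induced operators $\Pi^T_{x,y},\Pi^R_{x,y}$ on $c_0(A\cup B)$ (with $A=\{x_n:n\in\N\}$, $B=\{y_n:n\in\N\}$) satisfying $\Pi^R_{x,y}\circ\Pi^T_{x,y}=0_{x_0}\oplus Id_{c_0(A'\cup B)}$ and $\Pi^T_{x,y}\circ\Pi^R_{x,y}=Id_{c_0(A\cup B)}$. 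In particular $\Pi^T_{x,y}$ kills $1_{x_0}$, restricts to an isomorphism of $\{f\in c_0(A): f(0)=0\}$ onto $c_0(A)$, and restricts to an isomorphism of $c_0(B)$ onto $c_0(B)$ — precisely the hypotheses of Lemma \ref{lemmagowers}. Applying that Fredholm-index lemma yields finitely-supported, pairwise-disjoint $f_k\in c_0(A\cup B)$ with $f_k(x_n)=f_k(y_n)$ for all $n$, and distinct indices $m_k$ with $|\Pi^T_{x,y}(f_k)(x_{m_k})-\Pi^T_{x,y}(f_k)(y_{m_k})|>\varepsilon$ for some $\varepsilon>0$.

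Now I would lift the $f_k$ back to $X_+$: since each $f_k$ has finite support, it corresponds to a function $F_k\in X_+$ supported on finitely many $K_n$'s, constant on the relevant clopen pieces, with $F_k(x_n)=F_k(y_n)$; more carefully, one chooses $F_k$ to be $(C_n)$-simple and built from clopen sets in $B_n$ so that $F_k$ agrees with the $\bowtie$-symmetric behaviour dictated by $f_k$. The quantity $\Pi^T_{x,y}(f_k)(x_{m_k})$ equals $\sum_n \phi^T_{m_k,n}(x_n)f_k(x_n)$, which is the ``multiplication part'' of $T(F_k)|K_{m_k}$ evaluated at $x_{m_k}$; the error between $T(F_k)(i_{m_k,*}(x))$ and this sum is governed by the weakly compact remainders $S^T_{m_k,n}$, and here is where Lemma \ref{lemmaweaklycompacttrick} enters — it guarantees that, after passing to an infinite subset of the $k$'s and using the disjointness of supports together with the $\bowtie$-discreteness of the relevant points $\tau_n(x_n)$, the contributions of the weakly compact parts to $|T(F_k)(x_{m_k})-T(F_k)(y_{m_k})|$ can be made smaller than $\varepsilon/2$. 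Therefore $|T(F_k)(x_{m_k})-T(F_k)(y_{m_k})|>\varepsilon/2$ for infinitely many $k$ with the $m_k$ distinct and tending to infinity, while $x_{m_k}\bowtie y_{m_k}$. Taking $F=\sup_k F_k$ (which exists in an appropriate algebra by Lemma \ref{lemmaexistence}(5), after thinning via Corollary \ref{corollarysupremus} so that the suprema do not disturb the relevant functionals) gives an element with $T(F)\notin X_+$ by Corollary \ref{corollary2assymptotic}, contradicting that $T$ maps $X_+$ into $X_+$. Hence no such isomorphism exists.

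The main obstacle, I expect, is the bookkeeping in the last paragraph: one must simultaneously (i) keep the supports of the $F_k$ disjoint and push $m_k\to\infty$, (ii) control the weakly compact remainder terms $S^T_{m_k,n}(F_k)$ at the two $\bowtie$-related points uniformly in $k$ — this is exactly what Lemma \ref{lemmaweaklycompacttrick} is designed for, but extracting the right infinite subset requires combining it with a Rosenthal-lemma thinning (\ref{theoremrosenthal1}) so that the countably many measures $T^*(\delta_{x_{m_k}}-\delta_{y_{m_k}})$ behave coherently, and (iii) ensure the supremum $F$ genuinely lies in $X_+$ and is not perturbed by the finitely many functionals under consideration, again via Corollary \ref{corollarysupremus}. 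Once these three thinnings are interleaved correctly, the contradiction with Corollary \ref{corollary2assymptotic} is immediate.
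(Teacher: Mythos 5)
Your proposal follows the paper's own proof almost step for step: the reduction to operators $T,R$ with $R\circ T=0_{C(K_{1,0})}\oplus Id_{Y_+}$ and $T\circ R=Id_{X_+}$, the passage to $\Pi^T_{x,y}$ on $c_0(A\cup B)$ via Lemmas \ref{lemmamultiplicationpointwise} and \ref{lemmaisomorphismpointwise}, the Fredholm-index Lemma \ref{lemmagowers}, the lift of the resulting finitely supported vectors to $(C_n)$-simple elements of $X_0$, the supremum taken via \ref{lemmaexistence} (5) and \ref{corollarysupremus}, and the final contradiction with \ref{corollary2assymptotic}. The architecture is the right one and is essentially complete in outline.

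The one step that, as you describe it, would fail is the control of the weakly compact remainders. You fix $x\bowtie y$ arbitrarily at the outset and propose to tame the terms $S^T_{m_k,n}$ afterwards by ``passing to an infinite subset of the $k$'s''. That cannot work: for fixed $k$ and $n\in F_k$, the infimum of $|S^*_{m_k,n}(\delta_{x_{m_k}})(\tau_n^{-1}([a]))|$ over clopen $a\in C_n$ with $t_n\in [a]$ equals the mass this measure assigns to the whole fiber $\tau_n^{-1}[\{t_n\}]$, so if that mass is nonzero no choice of $a_n$ makes the remainder small; and since each $k$ involves a \emph{different} operator $S_{m_k,n}$ (the $m_k$ are distinct), Lemma \ref{lemmaweaklycompacttrick} --- a statement about finitely many bad fibers for each fixed triple $(m,n,j)$ --- says nothing about one fixed fiber across infinitely many operators, so thinning in $k$ does not help. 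The correct order of quantifiers is the reverse: first use Lemma \ref{lemmaweaklycompacttrick} to see that $\Gamma=\{t\in L: \exists n,m,j\ h_{n,*}(t)\in\Gamma(S_{m,n},j)\}$ is countable, then pick $t\in L\setminus\Gamma$, and only then choose $x\in K_{1,*}$, $y\in K_{2,*}$ with $\tau(x)=\tau(y)=t$. With that choice every fiber $\tau_n^{-1}[\{t_n\}]$ is null for every $S^*_{m,n}(\delta_{x_m})$ and $S^*_{m,n}(\delta_{y_m})$, and regularity of these measures then produces the sets $a_n\in C_n$ you need. The remaining points of your plan --- taking the $a_n$ in the $j$-invariant algebra $C_n$ so the lifts are $\bowtie$-symmetric and lie in $X_0$, and the Rosenthal-type thinning before forming the supremum --- are exactly as in the paper.
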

\begin{proof}
Suppose $T_1: Y_+\rightarrow X_+$ and $R_1: X_+\rightarrow Y_+$ 
be  mutually inverse isomorphisms.
Define $T,R:X_+\rightarrow X_+$ by $R=R_1$ and
$T =0_{C(K_{1,0})}\oplus T_1$.
We have $R\circ T= 0_{C(K_{1,0})}\oplus Id_{Y_+}$
and $T\circ R=Id_{X_+}$.

By \ref{lemmaisomorphismpointwise} 
$\Pi_{x,y}^T$ satisfies the hypothesis of \ref{lemmagowers}
To use it successfully we still need to
make  appropriate choice of $x$ and $y$. Let
$$\Gamma=\{t\in L: \exists n,m, j\in \N\ h_{n,*}(t)\in \Gamma(S_{m,n},j)\}.$$
By \ref{lemmaweaklycompacttrick}
 $\Gamma$ is countable where $S_{m,n}=S_{m,n}^T$.
Pick $t\in L\setminus \Gamma$ and $x\in K_1$, $y\in K_2$ such that
$\tau(x)=\tau(y)=t$ (see \ref{remarkbowtie}). As usual let $x_n=i_{n,*}(x)$ and $y_n=i_{n,*}(y)$,
and $t_n=h_{n,*}(t)$

Now use \ref{lemmagowers} obtaining 
$\alpha_k \in c_0(A\cup B)$ of pairwise disjoint and finite supports $F_k$ and an $\varepsilon>0$ such that
$\alpha_k(x_n)=\alpha_k(y_n)$ 
for every $n\in N$ and for some distinct $m_k$s 
$$|\Pi^T_{x,y}(\alpha_k)(x_{m_k})-\Pi^T_{x,y}(\alpha_k)(y_{m_k})|>\varepsilon.$$

For $n\in F_k$ we will find $a_n\in C_n$ such that 
the operator $T$ will have
approximately the same behavior on vectors 
$$f_k=\sum_{n\in F_k}\alpha_k(x_n)\chi_{\tau_n^{-1}[[a_n]]}\in X_0$$
as $\Pi_{x,y}^T$ on $\alpha_k$s.

Note that for each $z_n$ satisfying $\tau(z_n)=t_n$
(in particular $z_n=x_n,y_n$),
by the choice of $t$ from outside $\Gamma$, the measure $S_{m,n}^*(\delta_{i_{m,n}(z_n)})$ 
of the set $\tau^{-1}_n[\{t_n\}]$ is zero. 
So, using the fact that
$\tau^{-1}_n[\{t_n\}]=\bigcap\{\tau^{-1}([a]): t_n\in [a]\ \ a\in C_n\}$
for each $n\in F_k$ find $a_n\in C_n$
such that $t_n\in [a_n]$ and
$$|S^*_{m_k,n}(\delta_{x_{m_k}})(\tau_n^{-1}([a_n]))|,
|S^*_{m_k,n}(\delta_{y_{m_k}})(\tau_n^{-1}([a_n]))|<{\varepsilon\over 3|F_k||\alpha_k(x_n)|}$$
so, for each $k\in\N$ and $n\in F_k$ we have 
$$|\alpha_k(x_n) S_{m_k,n}(\chi_{\tau_n^{-1}[[a_n]]})(x_{m_k})|,
|\alpha_k(x_n) S_{m_k,n}(\chi_{\tau_n^{-1}[[a_n]]})(y_{m_k})|<{\varepsilon\over 3|F_k|}$$
and so
$$|S_{m_k,n}(\sum_{n\in F_k}\alpha_k(x_n)\chi_{\tau_n^{-1}[[a_n]]}(x_{m_k})|,
|S_{m_k,n}(\sum_{n\in F_k}\alpha_k(x_n)\chi_{\tau_n^{-1}[[a_n]]}(y_{m_k})|<{\varepsilon\over 3}$$
and finally
$$|S_{m_k,n}(f_k)(x_{m_k})|,\ |S_{m_k,n}(f_k)(y_{m_k})|<\varepsilon/3$$
for each $k\in\N$ and each $n\in F_k$.
On the other hand 
$$|\Pi^T(f_k)(x_{m_k})-\Pi^T(f_k)(y_{m_k})|=
|\Pi^T_{x,y}(f_k)(x_{m_k})-\Pi^T_{x,y}(f_k)(y_{m_k})|>\varepsilon,$$
hence for each $k$ we have
$$|T(f_k)(x_{m_k})-T(f_k)(y_{m_k})|>\varepsilon/3$$
where $m_k$s are distinct and $f_k$'s are bounded  $(C_n)$-simple and of disjoint supports. 

Now note that for any infinite $M\subseteq \N$ there
is in $X_+$ the supremum $f_M$ of $\{f_k:k\in\N\}$.

Let $\zeta_k=T^*(\delta_{x_{m_k}}-\delta_{y_{m_k}})$. In particular we have
$|\zeta_k(f_k)|>\varepsilon/3$. Apply \ref{corollarysupremus} to obtain
an infinite $M$ such that
$|\zeta_k(f_M)|>\varepsilon/4$ for each $k\in M$.
But this means that 
$$|T(f_M)(x_{m_k})-T(f_M)(y_{m_k})|>\varepsilon/4$$
for each $k\in M$.
This is the final contradictions with \ref{corollary2assymptotic} since
$x_{m_k}\bowtie y_{m_k}$.

\end{proof}

\begin{corollary} $X_+$ and $Y_+$ are not isomorphic but
each is isomorphic to a complemented subspace of the other.
\end{corollary}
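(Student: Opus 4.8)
The plan is to argue by contradiction, assuming $T_1\colon Y_+\to X_+$ and $R_1\colon X_+\to Y_+$ are mutually inverse isomorphisms, and to reduce to the combinatorial core of Gowers' argument (here packaged in Lemma~\ref{lemmagowers}) by passing to the "matrix of multiplications" $\Pi^T$. First I would extend $T_1$ to $T\colon X_+\to X_+$ by declaring it zero on the $C(K_{1,0})$ summand, so that $R\circ T=0_{C(K_{1,0})}\oplus Id_{Y_+}$ and $T\circ R=Id_{X_+}$; this is the setup of Corollary~\ref{corollaryzeroone} and Lemma~\ref{lemmaisomorphismpointwise}. By Lemma~\ref{lemmaexistence}(2) each block $T(f)|K_m$ decomposes as $[\phi^T_{m,n}f]\circ i_{n,m}+S^T_{m,n}(f)$ with $S^T_{m,n}$ weakly compact, and by Lemmas~\ref{lemmasequenceconverges}--\ref{lemmamultiplication} these assemble into $\Pi^T\colon X_0\to X_0$. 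The point of the reduction is that, after fixing a single "fibre" $x\in K_{1,*}$, $y\in K_{2,*}$ with $\tau(x)=\tau(y)$, the pointwise operator $\Pi^T_{x,y}$ on $c_0(A\cup B)$ satisfies exactly the hypotheses of Lemma~\ref{lemmagowers} (this is Lemma~\ref{lemmaisomorphismpointwise}), so that lemma produces vectors $\alpha_k\in c_0(A\cup B)$ of pairwise disjoint finite supports $F_k$ with $\alpha_k(x_n)=\alpha_k(y_n)$ and distinct indices $m_k$ with $|\Pi^T_{x,y}(\alpha_k)(x_{m_k})-\Pi^T_{x,y}(\alpha_k)(y_{m_k})|>\varepsilon$.

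Next I would lift these abstract $c_0$-vectors back to genuine elements of $X_0$. The natural candidate is $f_k=\sum_{n\in F_k}\alpha_k(x_n)\chi_{\tau_n^{-1}[[a_n]]}$ for suitable clopen $a_n\in C_n$ containing $t_n=h_{n,*}(t)$; these are bounded, $(C_n)$-simple, and of pairwise disjoint supports. The subtlety is that $T(f_k)$ differs from $\Pi^T(f_k)$ by the weakly compact remainder terms $S^T_{m_k,n}$, and we must control those at the two points $x_{m_k},y_{m_k}$. Here is where one chooses $t$ carefully: using Lemma~\ref{lemmaweaklycompacttrick} (applied to all $S_{m,n}=S^T_{m,n}$ and all $j$), the set $\Gamma$ of bad fibres in $L$ is countable, so one picks $t\in L\setminus\Gamma$. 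For such $t$ the measures $S^*_{m_k,n}(\delta_{x_{m_k}})$ and $S^*_{m_k,n}(\delta_{y_{m_k}})$ give zero mass to the fibre $\tau_n^{-1}[\{t_n\}]$, hence by inner regularity along the directed family $\{\tau_n^{-1}([a]):t_n\in[a],\ a\in C_n\}$ one can shrink $a_n$ so that these measures give mass $<\varepsilon/(3|F_k||\alpha_k(x_n)|)$ to $\tau_n^{-1}([a_n])$. Summing over $n\in F_k$ yields $|S_{m_k,n}(f_k)(x_{m_k})|,|S_{m_k,n}(f_k)(y_{m_k})|<\varepsilon/3$, and therefore $|T(f_k)(x_{m_k})-T(f_k)(y_{m_k})|>\varepsilon/3$ with the $m_k$ distinct.

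Finally I would upgrade the sequence $(f_k)$ to a single vector. By Lemma~\ref{lemmaexistence}(5) applied inside each $C_n$ (the $f_k$ have disjoint $(C_n)$-simple supports), for a suitable infinite $M\subseteq\N$ the supremum $f_M=\sup_{k\in M}f_k$ exists in $X_+$. Setting $\zeta_k=T^*(\delta_{x_{m_k}}-\delta_{y_{m_k}})$, we have $|\zeta_k(f_k)|>\varepsilon/3$, and Corollary~\ref{corollarysupremus} lets us thin $M$ further so that $|\zeta_k(f_M)|>\varepsilon/4$ for all $k\in M$. That says $|T(f_M)(x_{m_k})-T(f_M)(y_{m_k})|>\varepsilon/4$ for infinitely many $k$, while $x_{m_k}\bowtie y_{m_k}$; this contradicts Corollary~\ref{corollary2assymptotic}, since $T(f_M)\in X_+$. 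I expect the main obstacle to be precisely the second step: controlling the weakly compact remainders simultaneously at the two moving points $x_{m_k},y_{m_k}$, which forces the careful choice of $t$ outside the countable set $\Gamma$ and the shrinking of the $a_n$; everything before that is bookkeeping with the matrix decomposition, and everything after is a routine "glue the bumps into one vector" argument using the supremum-stability of $X_+$.
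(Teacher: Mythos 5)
Your proposal is correct and follows the paper's own argument for the non-isomorphism of $X_+$ and $Y_+$ essentially step for step: the extension $T=0_{C(K_{1,0})}\oplus T_1$, the reduction via $\Pi^T_{x,y}$ to Lemma \ref{lemmagowers}, the choice of $t\in L\setminus\Gamma$ using Lemma \ref{lemmaweaklycompacttrick} to kill the weakly compact remainders on the fibre, the lifting to $(C_n)$-simple $f_k$, and the final supremum/Corollary \ref{corollarysupremus} argument contradicting Corollary \ref{corollary2assymptotic}. The remaining half of the corollary (mutual complemented embedding) is exactly the earlier lemma of Section 3, so nothing further is needed.
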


\vskip 50pt

\bibliographystyle{amsplain}

\end{document}